\documentclass[11pt,reqno]{amsart}
\usepackage[utf8]{inputenc}
\usepackage{amsmath,amssymb,url}
\usepackage{supertabular}
\usepackage{mathrsfs}
\usepackage{mathrsfs}

\vfuzz2pt 
\hfuzz2pt 

\newcommand{\CC}{{\mathbb C}}
\newcommand{\RR}{{\mathbb R}}

\newcommand{\ZZ}{{\mathbb{Z}}}

 \DeclareMathOperator{\Prob}{\mathbb{P}}   
 \DeclareMathOperator{\E}{\mathbb{E}}      
 \DeclareMathOperator{\V}{\mathbb{V}}      

 \newcommand{\I}{1\!\!1}                   

 \newcommand{\dd}{d}            
 \newcommand{\ii}{i}



\def\stacksum#1#2{{\stackrel{{\scriptstyle #1}}
{{\scriptstyle #2}}}}

\newcommand{\demi}{{\textstyle{\frac{1}{2}}}}

\newcommand{\hyperg}[4]{\phantom{}_2F_{1}({{#1}},{{#2}};{{#3}};{{#4}})}

\newcommand{\eps}{\varepsilon}
\newcommand{\Cc}{\CC}

\newcommand{\Zz}{\ZZ}

\newcommand{\Rr}{\RR}

\newcommand{\Fp}{\mathbb{F}}

\newcommand{\proba}{\Prob}
\newcommand{\expect}{\E}
\newcommand{\variance}{\V}
\newcommand{\charfun}{\mathbf{1}}

\newcommand{\mods}[1]{\,(\mathrm{mod}\,{#1})}


\newcommand{\ra}{\rightarrow}
\newcommand{\lra}{\longrightarrow}

\newcommand{\fleche}[1]{\stackrel{#1}{\lra}}

\DeclareMathOperator{\Imag}{Im}
\DeclareMathOperator{\Reel}{Re}




\newtheorem{thm}{Theorem}[section]
\newtheorem{cor}[thm]{Corollary}
\newtheorem{lem}[thm]{Lemma}
\newtheorem{prop}[thm]{Proposition}

\newtheorem{conjecture}[thm]{Conjecture}
\theoremstyle{definition}
\newtheorem{defn}[thm]{Definition}
\theoremstyle{remark}
\newtheorem{rem}[thm]{Remark}
\newtheorem{example}[thm]{Example}

\numberwithin{equation}{section}

\begin{document}
\title[Mod-Gaussian convergence]{Mod-Gaussian convergence: new limit
  theorems in probability and number theory}

\author{J. Jacod}
 \address{Institut de math\'ematiques de Jussieu,
 Universit\'e Pierre et Marie Curie, et C.N.R.S. UMR 7586,
 175, rue du Chevaleret,
 F-75013 Paris,
 France}
 \email{jean.jacod@upmc.fr}

 \author{E. Kowalski} \address{ETH Z\"urich -- D-MATH \\ R\"{a}mistrasse
   101\\ 8092 Z\"{u}rich, Switzerland}
 \email{kowalski@math.ethz.ch}

\author{A. Nikeghbali}
 \address{Institut f\"ur Mathematik,
 Universit\"at Z\"urich, Winterthurerstrasse 190,
 CH-8057 Z\"urich,
 Switzerland}
 \email{ashkan.nikeghbali@math.uzh.ch}

\subjclass[2000]{15A52, 60F05, 60F15, 11Mxx, 11N05, 14H25} 

\keywords{Limit theorems, Random Matrices, Characteristic Polynomial,
  Infinitely divisible distributions, zeta and $L$-functions,
  Katz-Sarnak philosophy, Erd\H{o}s-K\'ac Theorem}

\begin{abstract}
  We introduce a new type of convergence in probability theory, which
  we call ``mod-Gaussian convergence''. It is directly inspired by
  theorems and conjectures, in random matrix theory and number theory,
  concerning moments of values of characteristic polynomials or zeta
  functions. We study this type of convergence in detail in the
  framework of infinitely divisible distributions, and exhibit some
  unconditional occurrences in number theory, in particular for
  families of $L$-functions over function fields in the Katz-Sarnak
  framework. A similar phenomenon of ``mod-Poisson convergence'' turns
  out to also appear in the classical Erd\H{o}s-K\'ac Theorem.
\end{abstract}

\maketitle

\section{Introduction} \label{section:Intro} 

Characteristic polynomials of random matrices are essential objects in
Random Matrix Theory, and have also come to play a crucial role in the
remarkable results and conjectures linking random matrices with the
study of $L$-functions in number theory (see e.g. \cite{Mezzadri} for
recent surveys of this connection). Our present work finds its source
in the study of the asymptotic of moments of characteristic
polynomials of random unitary matrices by Keating and
Snaith~\cite{Keating-Snaith}, and the corresponding conjecture for the
moments of the Riemann zeta function on the critical line. More
precisely, Keating and Snaith proved (in probabilistic language) that
if $(Y_N)$, for $N\geq 1$, is a sequence of complex random variables,
with $Y_N$ distributed like $\det(I-X_N)$ for some random variable
$X_N$ taking values in the unitary group $U(N)$ and uniformly
distributed on $U(N)$ (i.e., distributed according to Haar measure),
then for any complex number $\lambda$ with $\Reel(\lambda)>-1$, we
have
\begin{equation}\label{momentinfini}
  \lim_{N\rightarrow\infty}\dfrac{1}{N^{\lambda^{2}}}\E
  \left[|Y_N|^{2\lambda}\right]=
  \dfrac{\left(G\left(1+\lambda\right)\right)^{2}}{G\left(1+2\lambda\right)},
\end{equation}
where $G$ is the \emph{Barnes (double gamma) function}.
\par
Then, using a (now classical) random matrix analogy, they make the
following conjecture for the moments of the Riemann zeta function (see
\cite{Keating-Snaith},\cite{Mezzadri}): for any complex number
$\lambda$ with $\Reel(\lambda)>-1$, we should have
\begin{equation}\label{momentszeta}
  \lim_{T\rightarrow\infty}\dfrac{1}{\left(\log T\right)^{\lambda^{2}}}
  \dfrac{1}{T}\int_{0}^{T}\left|\zeta\left(\dfrac{1}{2}
      +i t\right)\right|^{2\lambda}\dd t
  =M\left(\lambda\right)A\left(\lambda\right)
\end{equation}
where $M\left(\lambda\right)$ is the \emph{random matrix factor},
suggested by~(\ref{momentinfini}), namely
\begin{equation}\label{eq-rmtfactor}
M\left(\lambda\right)
=\dfrac{\left(G\left(1+\lambda\right)\right)^{2}}
{G\left(1+2\lambda\right)}
\end{equation}
while $A\left(\lambda\right)$ is the \emph{arithmetic factor} defined
by the Euler product
\begin{equation}\label{arithmeticfactor}
A\left(\lambda\right)=
\prod_{p}\left(1-\dfrac{1}{p}\right)^{\lambda^{2}}
\left(\sum_{m=0}^{\infty}\left(
\frac{\Gamma(\lambda+m)}{m!\Gamma(\lambda)}\right)^{2}p^{-m}\right),
\end{equation}
where, as usual, $p$ runs over prime numbers, and the product is here
absolutely and locally uniformly convergent; see
Section~\ref{ssec-arith} for details.
\par
We now look at~(\ref{momentinfini}) slightly differently. If we take
$\lambda=\ii u$ to be purely imaginary in (\ref{momentinfini}), then
we obtain a limit theorem involving the \emph{characteristic function}
(or Fourier transform) of the random variables $Z_N=\log|Y_N|^2$ (note
that $|Z_N|\neq0$ almost surely):
\begin{equation}\label{ML}
  \lim_{N\to\infty}e^{u^2\log N}\E[e^{\ii uZ_N}]
=  \lim_{N\to\infty}e^{u^2\log N}\E[e^{\ii u\log|Y_N|^2}]
=\dfrac{\left(G\left(1+\ii u\right)\right)^{2}}{G\left(1+2\ii u\right)}.
\end{equation}
\par
A renormalized convergence of the characteristic function as it occurs
in (\ref{ML}) is not standard in probability theory. However, it has
now appeared in various places in random matrix theory and number
theory, although (to the best of our knowledge) always under the form
of the convergence of normalized Mellin transforms as in
(\ref{momentinfini}) and (\ref{momentszeta}).
\par
In probability theory, the characteristic function $\expect(e^{iuZ})$
is a more natural object to consider, because contrary to Mellin or
Laplace transforms, it always exists and characterizes the
distribution of a random variable $Z$. Hence, in order to look more
deeply into the properties of this type of limiting behavior for a
sequence $(Z_N)$ of real-valued random variables, we use the
characteristic functions (the Fourier transforms of the laws), and
introduce the following definition:

\begin{defn}\label{def:M}
The sequence $(Z_N)$ is said to
\emph{converge in the mod-Gaussian sense} if the convergence
\begin{equation}\label{M}
e^{-\ii u\beta_N+u^2\gamma_N/2}\E[e^{\ii u Z_N}]~\to~\Phi(u)
\end{equation}
holds for all $u\in\RR$, where $\beta_N\in\RR$ and $\gamma_N\geq0$
are two sequences and $\Phi$ is a complex-valued function which is
continuous at $0$ (note that necessarily $\Phi(0)=1$). We call
$(\beta_N,\gamma_N)$ the {\em parameters}, and $\Phi$ the associated
{\em limiting function}.
\end{defn}

The main aim of this paper is to provide a general framework in which
convergence such as (\ref{M}) occurs naturally. Secondary goals are to
give examples, both in probability and number theory, and to argue for
the interest of this notion. As a first example, of course,~(\ref{ML})
shows that the random variables $\log |Y_N|^2$ converge in
mod-Gaussian sense with parameters $(0,2\log N)$ and limiting function
$G(1+iu)^2G(1+2iu)^{-1}$.
\par
\medskip
\par
The paper is organized as follows. In
Section~\ref{sec-mod-Gaussian}, we properly define the
``mod-Gaussian convergence'', give some immediate properties and
describe some easy examples where it occurs. In
Section~\ref{subsect:a} we show that, under some conditions on
the third moment, mod-Gaussian convergence occurs for sums of C\'esaro
means of triangular arrays of independent random variables. Within this
framework, a characterization of the limiting function $\Phi$
is found in the case of infinitely divisible
distributions, and it is shown to have a representation of
L\'evy-Khintchine type, with one extra term. More generally, we also
give a necessary and sufficient condition for the mod-Gaussian
convergence to hold when the laws of the $(Z_N)$ are infinitely
divisible, with an explicit expression for the limit function $\Phi$
(which again has a L\'evy-Khintchine type representation, with two
extra terms now).

In Section~\ref{sec-nt}, we give examples of mod-Gaussian convergence
in number theory, in two directions. First, we show that the
arithmetic factor $A(\lambda)$ in the moment conjecture, for
$\lambda=iu$, arises as limiting function $\Phi(u)$ for the
mod-Gaussian convergence of very natural sequences of random
variables, and hence so does $M(iu)A(iu)$; this is in particular
additional (though modest) evidence in favor of the
conjecture~(\ref{momentszeta}), since if that were not the case, the
conjecture would necessarily be false.  Second, we explain how
Deligne's Equidistribution Theorem and the Katz-Sarnak philosophy lead
to the proof of an analogue of the moment conjecture for families of
$L$-functions over function fields; this second problem was raised in
particular by B. Conrey.
We think that these facts illustrate that the philosophy of
mod-Gaussian convergence is a potentially crucial analytic framework
underlying deep issues of number theory. In addition, we interpret the
classical Erd\H{o}s-K\'ac theorem in a similar way, although with
``mod-Poisson'' convergence.
\par
\begin{rem}
  Because of the possible relevance of this paper both to probability
  theory and number theory, we have tried to write it in a balanced
  manner, so that experts in either field can understand it. This
  means, in particular, that we recall precisely some facts which, for
  one field at least, are entirely standard and well-known (e.g.,
  facts about infinitely divisible distributions, or zeta functions of
  curves over finite fields). This also means that, even though we are
  aware of the possibility of extending our results to sharper
  statements, we have not done so when this would, in our opinion,
  obscure the main ideas for one half of our readers.
\end{rem}
\par
\textbf{Notation.} We use the notation
$$
\nu(f),\quad \quad \int{f(x)d\nu(x)},\quad\quad
\int{f(x)\nu(\dd x)}
$$
interchangeably for the integral of a function $f$ with respect to
some measure $\nu$. We write, as usual in probability, $x\wedge y$ for
$\min (x,y)$. In number-theoretic contexts, $p$ always refers to a
prime number, and sums and products over $p$ (with extra conditions)
are over primes satisfying those conditions.
\par
\medskip
\par
\textbf{Acknowledgments}. We thank M. Yor for a number of interesting
discussions related to this project. 
\par
A.N. was partially supported by SNF Schweizerischer Nationalfonds
Projekte Nr. 200021 119970/1.

\section{General properties of mod-Gaussian
  convergence}\label{sec-mod-Gaussian}

We start with the definition of a slightly stronger form of
mod-Gaussian convergence:

\begin{defn}\label{def:MMM}
The sequence $(Z_N)$ is said to
\emph{strongly converge in the mod-Gaussian sense} if the convergence
in (\ref{M}) holds uniformly in $u$, on every compact subset of $\RR$.
\end{defn}

The left side of (\ref{M}) being continuous, the strong convergence
implies that $\Phi$ is continuous, hence the mere convergence.

For the mod-Gaussian convergence with parameters $\beta_N=\gamma_N=0$,
the convergence and the strong convergence are the same, and amount to
the convergence in law of the variables $Z_N$ (this is basically
L\'evy's Theorem).

\subsection{Formal properties}

It is natural to first ask for the intuitive meaning of mod-Gaussian
convergence (and for explanation of the chosen terminology). The
following proposition describes what might be called ``regular''
mod-Gaussian convergence:

\begin{prop}\label{prop:firstprop}
  Let $(X_N)$ be a sequence of real random variables converging in law
  to a limiting variable with characteristic function $\Phi$. If for
  each $N$ we let
\begin{equation}\label{eq-regular}
Z_N=X_N+G_N,
\end{equation}
where $G_N$ is a Gaussian random variable independent of $X_N$, and
with mean $\beta_N$ and variance $\gamma_N$, then we have the strong
mod-Gaussian convergence of the sequence $(Z_N)$, with limiting
function $\Phi$ and parameters $(\beta_N,\gamma_N)$.
\end{prop}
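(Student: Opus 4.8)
The plan is to compute the characteristic function of $Z_N$ directly using the independence of $X_N$ and $G_N$, and then read off the mod-Gaussian convergence from the hypothesis on $(X_N)$.

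First I would use the fact that $Z_N = X_N + G_N$ is a sum of independent random variables, so that its characteristic function factorizes:
\begin{equation*}
\E[e^{\ii u Z_N}] = \E[e^{\ii u X_N}]\,\E[e^{\ii u G_N}].
\end{equation*}
Since $G_N$ is Gaussian with mean $\beta_N$ and variance $\gamma_N$, its characteristic function is the classical $\E[e^{\ii u G_N}] = e^{\ii u \beta_N - u^2 \gamma_N/2}$. Substituting this in and multiplying by the normalizing factor $e^{-\ii u \beta_N + u^2 \gamma_N/2}$ from Definition~\ref{def:M}, the two Gaussian exponentials cancel exactly, leaving
\begin{equation*}
e^{-\ii u \beta_N + u^2 \gamma_N/2}\,\E[e^{\ii u Z_N}] = \E[e^{\ii u X_N}].
\end{equation*}
So the renormalized quantity whose convergence defines mod-Gaussian convergence is, for every $N$ and every $u$, simply the characteristic function of $X_N$.

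The remaining step is then to invoke the hypothesis that $(X_N)$ converges in law to a variable with characteristic function $\Phi$. By L\'evy's continuity theorem, convergence in law is equivalent to pointwise convergence of characteristic functions, which gives $\E[e^{\ii u X_N}] \to \Phi(u)$ for all $u \in \RR$; this is exactly the mod-Gaussian convergence with parameters $(\beta_N, \gamma_N)$ and limiting function $\Phi$. To upgrade this to the \emph{strong} convergence of Definition~\ref{def:MMM}, I would recall the standard strengthening of L\'evy's theorem: when the $X_N$ converge in law, the convergence of their characteristic functions is automatically uniform on every compact subset of $\RR$. This uniformity is inherited verbatim by $e^{-\ii u \beta_N + u^2 \gamma_N/2}\E[e^{\ii u Z_N}]$ because of the exact identity established above, yielding the claimed strong mod-Gaussian convergence.

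I do not anticipate a serious obstacle here: the proof is essentially the observation that the renormalization in the definition of mod-Gaussian convergence was designed precisely to strip off an independent Gaussian factor. The only point requiring a little care is the passage to \emph{uniform} convergence on compacts; rather than re-proving it from scratch, the cleanest route is to cite the uniform version of L\'evy's continuity theorem, so that no separate equicontinuity or tightness argument is needed.
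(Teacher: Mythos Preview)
Your proposal is correct and follows essentially the same approach as the paper: factor the characteristic function using independence, cancel the Gaussian factor, and then invoke the locally uniform convergence of characteristic functions that comes with convergence in law. The paper's proof is slightly more terse but the logic is identical.
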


\begin{proof}
  Because $X_N$ and $G_N$ are independent, we have
$$
\E(e^{iu Z_N})=\E(e^{iu X_N})\E(e^{iu G_N})=
e^{iu\beta_N-u^2 \gamma_N/2}\E(e^{iu X_N}),
$$
by the formula for the characteristic function of a Gaussian random
variable (see, e.g.,~\cite[(16.2)]{jacodprotter}). 
\par
The convergence in law of $(X_N)$ implies the local uniform
convergence of the characteristic function of $X_N$ to $\Phi$
(the easy half of the L\'evy Criterion), hence the convergence
(\ref{M}) holds locally uniformly in $u$.
\end{proof}

We see that under this scheme, the variable $Z_N$ is decomposed into
two terms: a variable $X_N$, which converges in law, and a Gaussian
variable, with arbitrary variance and mean, which can be viewed as a
``noise'' added to the converging variables. We then think intuitively
of looking at $Z_N$ \emph{modulo} the subset of Gaussian random
variables, and then only the convergent sequence remains. It is this
way of producing the convergence introduced in Definition~\ref{def:M}
which motivated the terminology ``mod-Gaussian''.
\par
Proposition
\ref{prop:firstprop} does \emph{not} cover all cases of mod-Gaussian
convergence: as we will see later, the limiting function $\Phi$ of
a sequence converging in the mod-Gaussian sense may not be a
characteristic function. However, the intuitive picture of some
converging ``core'' hidden by possibly wilder and wilder noise may
still be useful.
\par
The next proposition summarizes a few basic properties of
mod-Gaussian convergence that follow easily from the definition
(the first part, in particular, is another justification for the
terminology).

\begin{prop}\label{prop-formal} 
  \emph{(1)} Let $(Z_N)$ be a sequence of real-valued random variables
  for which mod-Gaussian convergence holds with parameters
  $(\beta_N,\gamma_N)$ and limiting function $\Phi$. Then the
  mod-Gaussian convergence holds for some other parameters
  $(\beta'_N,\gamma'_N)$ and limiting function\footnote{\ Here,
    $\Phi'$ is not the derivative of $\Phi$.} $\Phi'$, if and only if
  the limits
\begin{equation}\label{A1}
\beta=\lim_{N\ra+\infty}{(\beta_N-\beta'_N)},\quad\quad
\gamma=\lim_{N\ra +\infty}{(\gamma_N-\gamma'_N)},
\end{equation}
exist in $\RR$. In this case $\Phi'$ is given by
\begin{equation}\label{A2}
\Phi'(u)=e^{i\beta u-u^2\gamma /2}\,\Phi(u),
\end{equation}
and if the strong convergence holds with the parameters
$(\beta_N,\gamma_N)$ it also holds with $(\beta'_N,\gamma'_N)$.
\par
\emph{(2)} Let $(Z_N)$ and $(Z'_N)$ be two sequences of random
variable with mod-Gaussian convergence (resp. strong convergence),
with respective parameters $(\beta_N,\gamma_N)$ and
$(\beta'_N,\gamma'_N)$, and limiting functions $\Phi$ and $\Phi'$. If
$Z_N$ and $Z'_N$ are independent for all $N$, then the sums
$(Z_N+Z'_N)$ satisfy mod-Gaussian convergence (resp. strong
convergence) with limiting function the product $\Phi\Phi'$ and
parameters $(\beta_N+\beta'_N,\gamma_N+\gamma'_N)$.
\end{prop}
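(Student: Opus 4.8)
The plan is to dispatch part (2) first, since it is purely multiplicative, and then reduce part (1) to the analysis of a single ``Gaussian factor''. For part (2), independence gives $\E(e^{iu(Z_N+Z'_N)})=\E(e^{iuZ_N})\,\E(e^{iuZ'_N})$, so the quantity normalized by $(\beta_N+\beta'_N,\gamma_N+\gamma'_N)$ factors as the product of the two separately normalized quantities; these converge to $\Phi(u)$ and $\Phi'(u)$, hence the product converges to $\Phi(u)\Phi'(u)$, which is continuous at $0$. In the strong case the two factors converge uniformly on compacts to bounded limits, so their product does as well. For part (1), I would set $f_N(u)=e^{-iu\beta_N+u^2\gamma_N/2}\E(e^{iuZ_N})$ and let $g_N(u)$ be the analogous quantity built from $(\beta'_N,\gamma'_N)$. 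Since $\Phi$ is continuous at $0$ with $\Phi(0)=1$, there is $\delta>0$ with $\Phi(u)\neq0$ on $(-\delta,\delta)$, and for such $u$ eventually $f_N(u)\neq0$, so on this neighbourhood one may form the ratio
\[
g_N(u)/f_N(u)=e^{iu(\beta_N-\beta'_N)-u^2(\gamma_N-\gamma'_N)/2}.
\]
Writing $a_N=\beta_N-\beta'_N$ and $b_N=\gamma_N-\gamma'_N$, everything reduces to understanding when this pure Gaussian factor converges.

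The ``if'' direction is then immediate: if $\beta=\lim a_N$ and $\gamma=\lim b_N$ exist in $\RR$, the factor converges locally uniformly to $e^{i\beta u-u^2\gamma/2}$, so $g_N(u)\to e^{i\beta u-u^2\gamma/2}\Phi(u)$, which is continuous at $0$; this gives both the mod-Gaussian convergence with the new parameters and formula (\ref{A2}). Strong convergence is preserved because $e^{iua_N-u^2b_N/2}$ is uniformly bounded on compacts (as $b_N$ is bounded) and converges there uniformly, and the product of two such sequences converges uniformly.

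The substance is the ``only if'' direction, where I assume both $f_N\to\Phi$ and $g_N\to\Phi'$ and must produce the limits in (\ref{A1}). On $(-\delta,\delta)$ the ratio converges to $R(u):=\Phi'(u)/\Phi(u)$, which is continuous at $0$ with $R(0)=1$. Taking moduli gives $e^{-u^2b_N/2}\to|R(u)|$; evaluating at one fixed $u_0\neq0$ small enough that $R(u_0)\neq0$ yields $b_N\to\gamma:=-2u_0^{-2}\log|R(u_0)|\in\RR$. Multiplying the factor by $e^{u^2b_N/2}\to e^{u^2\gamma/2}$, I obtain $e^{iua_N}\to S(u):=R(u)e^{u^2\gamma/2}$ on the neighbourhood, where $|S|\equiv1$ and $S$ is continuous at $0$ with $S(0)=1$.

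The main obstacle is to recover convergence of the real sequence $a_N$ from this convergence of the unimodular factors $e^{iua_N}$; this is essentially a tightness statement for the point masses $\delta_{a_N}$, which I would handle elementarily rather than invoking L\'evy's theorem. For small $\delta_1\in(0,\delta)$, dominated convergence (the integrands are bounded by $1$) gives
\[
\frac{\sin(\delta_1a_N)}{\delta_1a_N}=\frac{1}{2\delta_1}\int_{-\delta_1}^{\delta_1}e^{iua_N}\,du\ \longrightarrow\ \frac{1}{2\delta_1}\int_{-\delta_1}^{\delta_1}S(u)\,du,
\]
and by continuity of $S$ at $0$ the right-hand side has modulus greater than $1/2$ once $\delta_1$ is fixed small. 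Since $|\sin x/x|\le1/|x|<1/2$ for $|x|>2$, this forces $|\delta_1a_N|\le2$ eventually, so $(a_N)$ is bounded. Any subsequential limit $\alpha$ then satisfies $e^{iu\alpha}=S(u)$ on $(-\delta,\delta)$, which determines $\alpha$ uniquely; hence $a_N\to\beta$ for some $\beta\in\RR$, and $R(u)=e^{i\beta u-u^2\gamma/2}$, recovering (\ref{A2}) and completing the proof.
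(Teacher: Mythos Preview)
Your proof is correct and follows the same strategy as the paper's: dispatch (2) by multiplicativity, and for (1) form the ratio $g_N/f_N$ on a neighbourhood where $\Phi$ (and hence eventually $f_N$) is nonvanishing, extract $\gamma$ from the modulus, then extract $\beta$ from the resulting unimodular factor. The only difference is one of detail: the paper simply asserts that $e^{-iu(\beta_N-\beta'_N)}\to S(u)$ on $[-\delta,\delta]$ forces the convergence of $\beta_N-\beta'_N$, whereas you supply an explicit argument (averaging over $u$ to get $\sin(\delta_1 a_N)/(\delta_1 a_N)$, hence boundedness, then uniqueness of subsequential limits). This is a legitimate and self-contained way to justify the step the paper leaves to the reader.
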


\begin{proof}
(2) follows from the multiplicativity of the characteristic functions
of independent variables.
\par
As for (1), if (\ref{A1}) holds the mod-Gaussian convergence with
parameters $(\beta'_N,\gamma'_N)$ and limiting function $\Phi'$ given
by (\ref{A2}) is obvious from the definition, as well as the last
claim (it is also a special case of (2)).
\par
Conversely, suppose that the mod-Gaussian convergence holds with
parameters $(\beta'_N,\gamma'_N)$ and limiting function $\Phi_1$. Then
\begin{equation}\label{A3}
  e^{-iu\beta_N+u^2\gamma_N/2}\E(e^{iuZ_N})\ra \Phi(u),\quad
  e^{-iu\beta'_N+u^2\gamma'_N/2}\E(e^{iuZ_N})\ra \Phi'(u).
\end{equation}
The function $\Phi$ and $\Phi'$ are both continuous and equal to $1$
at $0$, so they are non-vanishing on a neighborhood $[-\delta,\delta]$
of $0$, with $\delta>0$. Then for any non-zero $u\in[-\delta,\delta]$,
by taking the ratio and the modulus in (\ref{A3}), we get
$$
\lim_{N\ra+\infty}{e^{(\gamma'_N-\gamma_N)u^2/2}}
=\Bigl|\frac{\Phi'(u)}{\Phi(u)}\Bigr|>0,
$$
hence the second part of (\ref{A1}) holds with
$$\gamma=\frac{2}{u^2}
\log \Bigl|\frac{\Phi'(u)}{\Phi(u)}\Bigr|
$$
(this limit does not depend on the choice of $u$). Moreover, by
(\ref{A3}) again,
$$
e^{-i(\beta_N-\beta'_N)u}\ra \frac{\Phi(u)}{\Phi'(u)}~e^{u^2\gamma/2}
$$
for all $u\in[-\delta,\delta]$, and the first part of (\ref{A1}) follows.
\end{proof}

\begin{rem} 
It is important to notice that the mod-Gaussian
convergence does {\em not}\, require the parameter sequences
$\beta_N$ and $\gamma_N$ to converge. However, it implies the
uniqueness of the parameters $(\beta_N,\gamma_N)$, {\em up to a
convergent sequence} (this is what (1) above says).
\par
In the most usual situations, $\Phi$ will be smooth and
$\expect(e^{iuZ_N})$ also, and comparing expansions to second order
for the left-hand side and right-hand side of~(\ref{M}) around $u=0$,
one finds that, in this situation, the following will hold:
\par
(1) When varying the parameters (using~(\ref{A2}), there is a
\emph{unique} possible limiting function $\Phi_0$ such that
\begin{equation}\label{eq-canonical}
\Phi_0(u)=1+o(u^2),\quad\quad \text{ for } u\ra 0\;
\end{equation}
\par
(2) For this limiting function $\Phi_0$, up to adding sequences
converging to $0$, we have
$$
\beta_N=\expect(Z_N),\quad\quad
\gamma_N=\variance(Z_N).
$$
\par
However, note that in natural situations, it is by no means clear if
the limiting function satisfies~(\ref{eq-canonical}), for instance
for~(\ref{ML}). It may also not be the most natural choice (see
Proposition~\ref{prop:firstprop}).
\end{rem}

\begin{rem}
  Observe that (1) in Proposition~\ref{prop-formal} would fail, should
  we drop the requirement of continuity of $\Phi$ at $0$. For example
  if $(Z_N)$ converges in the mod-Gaussian sense with parameters
  $(\beta_N,\gamma_N)$, with $\gamma_N\to\infty$, and if we take
  $\delta_N\to\infty$ with $0\leq \delta_N<\gamma_N$, then (\ref{M})
  holds with the parameters $(\beta_N,\gamma_N-\delta_N)$ as well, and
  the associated limiting function vanishes outside $0$.
\end{rem}

\subsection{Remarks, questions and problems} \label{sec-questions}

The introduction of mod-Gaus\-sian convergence suggests quite a few
questions which it would be interesting to answer, to deepen the
understanding of the meaning of this type of limit behavior of
sequences of random variables.
\par
We first remark that, from the point of view of
Proposition~\ref{prop:firstprop}, it is also natural to introduce
convergence modulo other particular classes of random variables: given
a family $\mathcal{F}=(\mu_{\lambda})_{\lambda\in\Lambda}$ of
probability distributions parametrized by some set $\Lambda$, such
that the Fourier transforms
$$
\hat{\mu}(\lambda,u)=\int_{\Rr}{e^{itx}d\mu_{\lambda}(t)}
$$
are non-zero for all $u\in\Rr$, one would say that a sequence of
random variables $(Z_N)$ converges in the mod-$\mathcal{F}$ sense if,
for some sequence $\lambda_N\in \Lambda$, we have
$$
\lim_{N\ra +\infty}{
  \hat{\mu}(\lambda_N,u)^{-1}\E(e^{iuZ_N})}=\Phi(u)
$$
for all $u\in\Rr$, the limiting function $\Phi$ being continuous at
$0$. Weak and strong convergence can be defined accordingly.
A particularly natural idea that comes to mind is to look at the
family of {\em symmetric stable}\, variables with index $\alpha\in(0,2)$,
and parametrized by $\gamma\in \Lambda=(0,+\infty)$, so that
$$
\hat{\mu}(\gamma,u)=e^{-\gamma|u|^{\alpha}}.
$$
\par
Such examples for $\alpha\not=2$ have not (yet) been observed ``in the
wild'', but we will see in Section~\ref{ssec-poisson} that classical
results of analytic number theory can be interpreted as an instance of
``mod-Poisson'' convergence, i.e., with $\mathcal{F}$ the family of
Poisson distributions on the integers with parameter
$\lambda\in(0,+\infty)$.
\par
Another natural generalization is the fairly obvious notion of
multi-dimen\-sional mod-Gaussian convergence for random vectors, or
indeed for stochastic processes. The finite-dimensional case may be
used, in random-matrix context, to interpret results on moments of
products of characteristic polynomials evaluated at different points
of the unit circle.  
\par
\medskip
\par
Now here are some obvious questions:
\par
(1) Can one find a convenient criterion for mod-Gaussian convergence
to be of the ``regular'' type of Proposition~\ref{prop:firstprop}? Is
there a ``weak convergence'' description of mod-Gaussian convergence
using test functions of some type?
\par
(2) Is the idea useful in analysis also? Here one could think that the
analogue of the ``regular'' mod-Gaussian convergence would be to have a
sequence of distributions $(T_N)$ satisfying
$$
T_N=g_N\star S_N
$$
where $\star$ is the convolution product, $g_N$ is the distribution
associated to a Schwartz function of the type
$\exp(iu\beta_N-u^2\gamma_N/2)$, and $S_N$ is some convergent sequence
of distributions.  For instance, is it possible that some
approximation schemes for solutions of some kind of equations converge
in the mod-Gaussian sense? Is there, then, a more direct way to
recover $S_N$ (numerically, for instance) than by performing an
inverse Fourier transform? In other words, can the Gaussian noise
$g_N$ be ``filtered out'' naturally?
\par
(3) Is there a convenient criterion for a function $\Phi$ defined on
$\Rr$, with $\Phi(0)=1$ and $\Phi$ continuous at $0$, to be a limiting
function for mod-Gaussian convergence, similar to Bochner's Theorem (a
function $\varphi\,:\, \Rr\ra \Cc$ is a characteristic function of a
probability measure if and only if $\varphi$ is continuous,
$\varphi(0)=1$, and $\varphi$ is a positive-definite function)?


\section{Limit theorems with mod-Gaussian behavior}
\label{subsect:a}

In this section, we provide several theorems characterizing situations
in which mod-Gaussian convergence, as introduced in Definition
\ref{def:M}, holds.

\subsection{The central limit theorem for mod-Gaussian convergence}
\label{sec-limit}

We start with a result which provides a very general criterion for
mod-Gaussian convergence in the framework of the classical limit
theorems of probability theory. This suggests that mod-Gaussian
convergence is a ``higher order'' analogue of the classical
convergence in distribution. 
\par
We recall first the standard limit theorems in the setting of
triangular arrays of independent identically distributed (in short,
i.i.d.) random variables.
\par
Let $(X_i^n)$, for $n\geq 1$ and $1\leq i\leq n$, be random
variables, where the variables
$$
X_1^n,\ldots,X_n^n
$$
in each row are i.i.d. with law denoted by $\mu_n$. For any
integer $n\geq 1$, let
$$
S_n=X_1^n+\ldots+X_n^n
$$
denote the sum of the $n$-th row.
\par
If the $X_i^n$ have expectation zero and variance $1$, the
Law of Large Numbers states that $S_n/n$ converges in probability to
$0$ as $n\ra +\infty$, and the Central Limit Theorem
states that the random variables $S_n/\sqrt{n}$, which are centered
with variance $1$, converge in law to the standard Gaussian variable
$\mathcal{N}(0,1)$.
\par
The latter can be interpreted as a ``second order'' type of behavior
of $S_n/n$, beyond the ``first order'' convergence to $0$, by
rescaling by $\sqrt{n}$ to obtain variance $1$. Instead of this
classical normalization, we want to look for finer information by
normalizing with a growing variance. 
\par
Working with $S_n$ directly does not seem to lead to fruitful results,
but for $N\geq 1$, we can consider the \emph{logarithmic mean} of the
$S_n$, i.e., the random variables
\begin{equation}\label{eq-cesaro}
Z_N=\sum_{n=1}^N{\frac{S_n}{n}}=
\sum_{n=1}^{N}\dfrac{1}{n}\left(X_1^n+\ldots+X_n^n\right)
\end{equation}
which have variance given by the $N$-th harmonic number $H_N$, i.e.,
we have
$$
\V(Z_N)=H_N=\sum_{n=1}^N{\frac{1}{n}}.
$$
\par
Note that it is well-known that, for a numerical sequence $(u_n)$ that
converges to a limit $\alpha$, the analogue \emph{logarithmic means}
$$
v_N=\frac{1}{\log N}\sum_{n=1}^N{\frac{u_n}{n}}
$$
also converge to $\alpha$ (see, e.g.,~\cite[III.9]{zygmund}). This
shows that, intuitively, the $Z_N$ can ``amplify'' the sums $S_N$ by a
logarithmic factor.
\par
We now show that, under quite general conditions, the $Z_N$ converge
in the mod-Gaussian sense.


\begin{thm}\label{thm:1}
  Let $\left(X_i^n\right)_{i.n\geq1}$ be a triangular array of random
  variables, \emph{all independent}, and such that the variables in
  the $n$th row have the same law $\mu_n$, and let assume that $\mu_n$
  has mean zero, variance $1$ and third absolute moment satisfying
\begin{equation}\label{conditionmomet3}
  \sum_{n=1}^{\infty}\dfrac{m_n}{n^2}<\infty, 
  \quad  \text{ where } \quad m_n=\int_{\Rr}|x|^3\mu_n(\dd x).
\end{equation}
Then the logarithmic means $Z_N$ defined by~\emph{(\ref{eq-cesaro})}
strongly converge in the mod-Gaussian sense, with parameters
$(0,H_N)$, or with parameters $(0,\log N)$.
\end{thm}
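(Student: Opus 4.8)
The plan is to exploit the full independence of the array to turn the characteristic function of $Z_N$ into a genuine product, and then to recognize the Gaussian normalization as absorbing the second-order part of each factor. Writing $\phi_n$ for the characteristic function of $\mu_n$, independence of all the $X_i^n$ gives
\[
\E[e^{\ii u Z_N}]=\prod_{n=1}^N\prod_{i=1}^n\E\bigl[e^{\ii u X_i^n/n}\bigr]=\prod_{n=1}^N\phi_n(u/n)^n,
\]
since the $n$ variables in the $n$th row share the law $\mu_n$. As $H_N=\sum_{n=1}^N 1/n$, the normalized quantity in (\ref{M}) with $\beta_N=0$, $\gamma_N=H_N$ factors as $\prod_{n=1}^N a_n(u)$ where $a_n(u)=e^{u^2/(2n)}\phi_n(u/n)^n$. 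The whole problem thus reduces to showing that this infinite product converges as $N\to\infty$, uniformly for $u$ in each compact set; the limit $\Phi$ will then automatically be continuous with $\Phi(0)=1$ (each $a_n(0)=1$), giving the strong mod-Gaussian convergence with parameters $(0,H_N)$.

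First I would record the standard second-order estimate for a law with mean $0$, variance $1$ and third absolute moment $m_n$, namely $|\phi_n(t)-(1-t^2/2)|\le m_n|t|^3/6$, obtained from $|e^{\ii y}-1-\ii y+y^2/2|\le|y|^3/6$ with $y=tx$; note that the vanishing linear term is what forces $\beta_N=0$. Setting $t=u/n$ and $w_n=\phi_n(u/n)-1=-u^2/(2n^2)+R_n$ with $|R_n|\le m_n|u|^3/(6n^3)$, hypothesis (\ref{conditionmomet3}) forces $m_n/n^2\to0$, so $w_n\to0$ uniformly on compacta and, for $n$ large, $\log\phi_n(u/n)$ is given by the principal branch. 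The crucial cancellation is that the Gaussian factor contributes exactly $+u^2/(2n)$, which kills the term $n\cdot(-u^2/(2n^2))$ coming from $n\log(1+w_n)$; expanding $\log(1+w_n)=w_n-w_n^2/2+O(|w_n|^3)$ then yields
\[
\log a_n(u)=nR_n-\tfrac{n}{2}w_n^2+O(n|w_n|^3).
\]
I would then prove $\sum_n|\log a_n(u)|$ converges uniformly on compacta: the first term is controlled by $\tfrac{|u|^3}{6}\sum_n m_n/n^2<\infty$; the three pieces of $\tfrac n2 w_n^2$ are of orders $u^4/n^3$, $m_n|u|^5/n^4$ and $m_n^2|u|^6/n^5$, the first two plainly summable and the last tamed by $m_n\le n^2$ eventually (so $m_n^2/n^5\le m_n/n^3\le m_n/n^2$); the cubic remainder is dominated similarly since $|w_n|\le C/n$ for $n$ large.

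Convergence of $\sum_n\log a_n(u)$, uniform on compact sets, makes $\prod_{n=1}^N a_n(u)=\exp\bigl(\sum_{n=1}^N\log a_n(u)\bigr)$ converge uniformly to a continuous $\Phi$ with $\Phi(0)=1$, which is exactly strong mod-Gaussian convergence with parameters $(0,H_N)$. To pass to parameters $(0,\log N)$ I would invoke Proposition~\ref{prop-formal}(1): since $H_N-\log N\to\gamma$ (Euler's constant), the limits in (\ref{A1}) exist (with $\beta=0$, $\gamma=\lim(H_N-\log N)$), so the strong convergence persists with parameters $(0,\log N)$ and limiting function $e^{-\gamma u^2/2}\Phi(u)$. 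I expect the main obstacle to be exactly the bookkeeping of the remainder terms — in particular arranging that $\sum_n m_n^2/n^5$ and the cubic contributions are summable using only (\ref{conditionmomet3}) — together with checking that every bound is uniform in $u$ on compact sets, so that the limiting function inherits continuity and the convergence is genuinely strong.
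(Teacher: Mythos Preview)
Your proposal is correct and follows essentially the same route as the paper: factor $e^{u^2H_N/2}\E[e^{\ii uZ_N}]$ as $\prod_n a_n(u)$, take logarithms for $n$ large, cancel the $u^2/(2n)$ against the variance term from $\phi_n(u/n)$, and control the tail by a summable series using the third-moment hypothesis. The only cosmetic differences are that the paper quotes Breiman's expansion lemma in place of your explicit bound $|e^{\ii y}-1-\ii y+y^2/2|\le|y|^3/6$, and it uses the cruder estimate $|\log(1+z)-z|\le4|z|^2$ (yielding the single bound $u^3m_n/n^3+16u^4/n^4$) rather than your three-term Taylor expansion of the logarithm; the paper also writes $G_N=G_M\,e^{H_{M,N}}$ explicitly to sidestep any issue with $\phi_n(u/n)$ for small $n$, which you handle implicitly by saying ``for $n$ large.''
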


For the proof, we recall the following useful lemma.

\begin{lem}[\cite{breiman}, Proposition 8.44, p.180]\label{breiman1}
  If $X$ is a random variable with $\E[|X|^k]<\infty$, then the
  characteristic function $\phi$ of $X$ has the expansion:
$$
\phi(u)=\sum_{j=0}^{k-1}\dfrac{(\ii u)^j}{j!}\E[X^j]+
\dfrac{(\ii u)^k}{k!}\left(\E(X^k)+\delta(u)\right),
$$
where $\delta(u)$ is a function of $u$ that satisfies
$\lim_{u\to0}\delta(u)=0$ and $|\delta(u)|\leq3\E(|X|^k)$ for all $u$.
\end{lem}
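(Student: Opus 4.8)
The plan is to reduce the whole statement to a single pointwise Taylor estimate for the scalar function $x\mapsto e^{\ii x}$ and then integrate against the law of $X$. Writing $\phi(u)=\E[e^{\ii uX}]$ and rearranging the claimed identity, the function $\delta$ is forced to be
$$
\delta(u)=\frac{k!}{(\ii u)^k}\Bigl(\phi(u)-\sum_{j=0}^{k}\frac{(\ii u)^j}{j!}\E[X^j]\Bigr)
=\frac{k!}{(\ii u)^k}\,\E\Bigl[e^{\ii uX}-\sum_{j=0}^{k}\frac{(\ii uX)^j}{j!}\Bigr].
$$
All the moments $\E[X^j]$ for $0\le j\le k$ exist because $|x|^j\le 1+|x|^k$ gives $\E[|X|^j]\le 1+\E[|X|^k]<\infty$, which also legitimizes moving the finite sum inside the expectation by linearity. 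Thus everything comes down to controlling the remainder $r_k(y):=e^{\ii y}-\sum_{j=0}^{k}(\ii y)^j/j!$ evaluated at $y=uX$.

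The key technical input is the two-sided remainder estimate
$$
|r_k(y)|=\Bigl|e^{\ii y}-\sum_{j=0}^{k}\frac{(\ii y)^j}{j!}\Bigr|
\le \min\Bigl(\frac{|y|^{k+1}}{(k+1)!},\ \frac{2|y|^k}{k!}\Bigr),\qquad y\in\RR .
$$
I would derive this from the integral form of the Taylor remainder, namely $r_k(y)=\frac{\ii^{k+1}}{k!}\int_0^y (y-s)^k e^{\ii s}\,\dd s$, obtained by induction on $k$ via integration by parts. Bounding $|e^{\ii s}|\le 1$ yields the first estimate $|y|^{k+1}/(k+1)!$. Writing instead $r_k(y)=\frac{\ii^k}{(k-1)!}\int_0^y (y-s)^{k-1}(e^{\ii s}-1)\,\dd s$ (the extra polynomial term being absorbed by $\int_0^y(y-s)^{k-1}\dd s=y^k/k$) and using $|e^{\ii s}-1|\le 2$ yields the second estimate $2|y|^k/k!$. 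The first bound records that $r_k(y)=o(|y|^k)$ as $y\to 0$, the second gives the uniform control.

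With this in hand both assertions follow quickly. Introduce $h(y)=k!\,r_k(y)/(\ii y)^k$ for $y\neq 0$ and $h(0)=0$; then $r_k(uX)=(\ii uX)^k h(uX)/k!$, and after the factor $(\ii u)^k$ cancels one gets the clean formula $\delta(u)=\E[X^k\,h(uX)]$. The second remainder bound gives $|h(y)|\le 2$ for every $y$, whence $|\delta(u)|\le 2\,\E[|X|^k]\le 3\,\E[|X|^k]$, which is the stated uniform bound. For the limit, the first remainder bound makes $h$ continuous at $0$ with $h(0)=0$, so $h(uX(\omega))\to 0$ pointwise as $u\to 0$; since $|X^k h(uX)|\le 2|X|^k$ is integrable, dominated convergence yields $\delta(u)=\E[X^k h(uX)]\to 0$.

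The only delicate point is the remainder inequality with its two regimes; once that is available, the passage from the scalar estimate to the statement about $\phi$ is a routine use of linearity and the dominated convergence theorem. I do not expect a genuine obstacle here, the stated constant $3$ being a comfortable over-estimate of the sharper value $2$ produced by this argument.
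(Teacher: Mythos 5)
Your proof is correct; the paper offers no proof of this lemma, citing Breiman (Proposition 8.44) directly, and your argument is precisely the standard one behind that citation: the two-regime Taylor remainder bound $\bigl|e^{\ii y}-\sum_{j=0}^{k}(\ii y)^j/j!\bigr|\leq\min\bigl(|y|^{k+1}/(k+1)!,\,2|y|^k/k!\bigr)$, obtained from the integral form of the remainder, followed by dominated convergence. The only difference from Breiman's own derivation is cosmetic: he expands the real and imaginary parts separately with Lagrange-type remainders, which is where the looser constant $3$ in the statement originates, whereas your integral-remainder route yields the sharper constant $2$, which of course implies the stated bound.
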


\begin{proof}[Proof of Theorem~\ref{thm:1}]
  Let $\phi_n(u)$ be the characteristic function of $\mu_n$, and let
$$
G_N(u)=e^{u^2 H_N/2}\E[e^{\ii u Z_N}],
$$
which is continuous in $u$. It then suffices to show that $G_N$
converges locally uniformly to a limiting function.  By the
independence assumption and standard properties of characteristic
functions, we have
$$
G_N(u)=e^{u^2H_N/2}\prod_{n=1}^N \phi_n(u/n)^n.
$$
\par
Let $A>0$ be fixed. Applying Lemma \ref{breiman1} with $k=2$ and
using the fact that the $\mu_n$'s are centered with variance $1$, we
obtain that for $|u|\leq A$ and $n\geq 2A$, we have
$$
|\phi_n(u/n)-1|\leq 2u^2/n^2\leq 1/2.
$$
\par
Taking $\log$ to be the principal branch
of the logarithm (which is zero at $1$) on the disk
$\{z\in\CC:\;|z-1|\leq1/2\}$, we have for $N>M\geq2A$ and $|u|\leq A$:
\begin{equation}\label{eqcvuni}
  G_N(u)=G_M(u)e^{H_{M,N}(u)}
\end{equation}
where
\begin{equation}\label{defHMN}
H_{M,N}(u)=\sum_{n=M+1}^Nn\left(\log \phi_n(u/n)+\dfrac{u^2}{2n^2}\right).
\end{equation}
\par
Another application of Lemma \ref{breiman1} with $k=3$ combined with
the inequality $|\log(1+z)-z|\leq 4|z|^2$ for $|z|\leq1/2$, yields
for $n\geq M$:
$$
\Bigl|\log\phi_n(u/n)+\dfrac{u^2}{2n^2}\Bigr|
\leq\dfrac{u^3}{n^3}m_n +\dfrac{16u^4}{n^4}.
$$
\par
It now follows from the assumption~(\ref{conditionmomet3}) that for
any fixed $M\geq2A$, $H_{M,N}(u)$ is the partial sum (in $N$) of a
series starting at $M+1$, which converges uniformly in $u\in[-A,A]$ to
a limit $\tilde{H}_M(y)$, as $N\to\infty$. Consequently, we deduce
from (\ref{eqcvuni}) that $G_N(u)$ converges, as $N\to\infty$, to
$G_M(u)\,\exp \tilde{H}_N(u)$, uniformly in $u\in[-A,A]$. Since $A$ is
arbitrarily large, the result follows.
\end{proof}

As an easy consequence of Theorem \ref{thm:1} one obtains the
following central limit theorem for the sum of C\'esaro means as
introduced in Theorem \ref{thm:1}.

\begin{cor}\label{cor-clt}
  With the assumptions and notations of Theorem \ref{thm:1}, the
  re-scaled random C\'esaro means $Z_N/\sqrt{\log N}$ converge in law
  to the standard Gaussian law $\mathcal{N}(0,1)$.
\end{cor}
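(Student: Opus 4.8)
The plan is to deduce this from the strong mod-Gaussian convergence already established in Theorem~\ref{thm:1}, using L\'evy's continuity theorem. By Theorem~\ref{thm:1}, the sequence $(Z_N)$ strongly converges in the mod-Gaussian sense with parameters $(0,\log N)$; that is, writing
$$
G_N(u)=e^{u^2\log N/2}\,\E[e^{\ii u Z_N}],
$$
the functions $G_N$ converge to some limiting function $\Phi$, uniformly on every compact subset of $\RR$, with $\Phi$ continuous at $0$ and $\Phi(0)=1$. My goal is simply to compute the pointwise limit of the characteristic function of $Z_N/\sqrt{\log N}$ and recognize it as that of $\mathcal{N}(0,1)$.

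First I would fix $u\in\RR$ and set $v_N=u/\sqrt{\log N}$, so that the characteristic function of the rescaled variable is $\E[e^{\ii u Z_N/\sqrt{\log N}}]=\E[e^{\ii v_N Z_N}]$. The key algebraic step is to factor this using the definition of $G_N$:
$$
\E[e^{\ii v_N Z_N}]=e^{-v_N^2\log N/2}\,G_N(v_N)
=e^{-u^2/2}\,G_N(v_N),
$$
since $v_N^2\log N=u^2$ exactly. Thus everything reduces to showing that the Gaussian normalizing factor has been chosen precisely to leave behind $e^{-u^2/2}$, while the residual factor $G_N(v_N)$ tends to $1$.

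The heart of the argument is the claim $G_N(v_N)\to 1$, and here is where I expect the one genuine subtlety to lie: the argument $v_N$ depends on $N$ and tends to $0$, so pointwise convergence of $G_N$ is not enough. This is exactly where the \emph{strong} (locally uniform) convergence from Theorem~\ref{thm:1} is indispensable. Concretely, choosing $\delta>0$ and noting that $v_N\in[-\delta,\delta]$ for $N$ large, I would bound
$$
|G_N(v_N)-1|\leq |G_N(v_N)-\Phi(v_N)|+|\Phi(v_N)-\Phi(0)|
\leq \sup_{|w|\leq\delta}|G_N(w)-\Phi(w)|+|\Phi(v_N)-1|.
$$
The first term goes to $0$ by uniform convergence on $[-\delta,\delta]$, and the second by continuity of $\Phi$ at $0$ together with $v_N\to 0$ and $\Phi(0)=1$.

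Combining these, $\E[e^{\ii u Z_N/\sqrt{\log N}}]\to e^{-u^2/2}$ for every $u\in\RR$. Since $e^{-u^2/2}$ is the characteristic function of $\mathcal{N}(0,1)$ and is continuous at $0$, L\'evy's continuity theorem yields the convergence in law of $Z_N/\sqrt{\log N}$ to $\mathcal{N}(0,1)$, completing the proof. I would remark that the argument is entirely formal given Theorem~\ref{thm:1}; the only ingredient beyond bookkeeping is the use of local uniformity to handle the moving evaluation point $v_N$, which is why the corollary genuinely relies on strong rather than plain mod-Gaussian convergence.
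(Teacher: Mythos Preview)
Your proof is correct and follows essentially the same approach as the paper: factor the characteristic function of $Z_N/\sqrt{\log N}$ as a Gaussian term times $G_N$ evaluated at $u/\sqrt{\log N}$, use the locally uniform convergence from Theorem~\ref{thm:1} to show the latter tends to $1$, and conclude by L\'evy's theorem. The only cosmetic difference is that the paper works with the parameters $(0,H_N)$ and then invokes $H_N\sim\log N$ for the Gaussian factor, whereas you take the parameters $(0,\log N)$ directly, making the exponent exactly $-u^2/2$; both choices are sanctioned by Theorem~\ref{thm:1}.
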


\begin{proof}
With the notation $G_N$ of the previous proof, the characteristic
function of $Z_N/\sqrt{\log N}$ is
$$e^{-u^2H_N/\log N}~G_N(u/\sqrt{\log N}).$$
Now $G_N$ converges locally uniformly to a function equal to $1$
at $0$, hence $G_N(u/\sqrt{\log N})\to1$, whereas we have
$H_N\sim\log N$. So the result follows from L\'evy's theorem.
\end{proof}


\begin{example}\label{ex-1}
  The observation of the following example, arising from Random Matrix
  Theory, was the source and motivation for the considerations in this
  and the next sections.
\par
Consider a sequence $(\gamma_n)$ of independent random variables,
with $\gamma_n$ having a gamma distribution with scale parameter $1$
and index $n$, that is with the density $\frac{1}{\Gamma(n)}x^{n-1}e^{-x}
\,\charfun_{\RR_+}(x)$. We are interested in the behavior of
$$Z_N=\sum_{n=1}^N{\frac{\gamma_n}{n}}-N.
$$
Recalling that one can represent $\gamma_n$ as the sum
$\gamma_n=\sum_{i=1}^nY^n_i$, where the $Y^n_i$ for $i=1,\cdots,n$ are
i.i.d.\ with gamma distribution with index $1$ (or, ``exponential
distribution''), we see that $Z_N$ is associated by (\ref{eq-cesaro})
with the variables $X^n_i=Y^n_i-1$, which have mean $0$ and variance
$1$ and a finite third moment (so (\ref{conditionmomet3})
holds). Hence we obtain the limit formula:
\begin{equation}\label{exemplegamma}
  \lim_{N\to\infty}e^{u^2H_N/2}
  \E\left[\exp\left(\ii u\left(
        \sum_{n=1}^{N}\dfrac{\gamma_{n}}{n}-N\right)
    \right)\right]=\Phi(u)
\end{equation}
for some continuous function $\Phi(u)$ with $\Phi(0)=1$.
\par
Now it turns out that this limit can also be computed explicitly (as
was first done in~\cite[Th. 1.2]{NikYor}), as a consequence of a
decomposition of the probability law of the characteristic polynomial
of random unitary matrices (distributed according to Haar measure) as
product of independent gamma and beta random variables. Indeed, it was
shown that for any complex number $z$ with $\Reel(z)>-1$, we have
\begin{multline}\label{premiermellinlimit}
  \lim_{N\rightarrow\infty}\dfrac{1}{N^{\frac{z^{2}}{2}}}\E
  \left[\exp\left(-z\left(\sum_{n=1}^{N}\dfrac{\gamma_{n}}{n}
        -N\right)\right)\right]=\\
  \left(A^{z}\exp\left(\dfrac{z^{2}}{2}\right)G\left(1+z\right)\right)^{-1}
\end{multline}
where $ A=\sqrt{\dfrac{e}{2\pi}}$ and $G$ is the Barnes function.
\par
When $z=-\ii u$, the left-hand side is just the left-hand side
of~(\ref{exemplegamma}), so this argument gives a formula for
$\Phi(u)$. More precisely, a representation of the Barnes function
obtained in~\cite[Proposition 2.3]{NikYor}, shows that the limiting
function $\Phi(u)$ in~(\ref{exemplegamma}) is equal to
\begin{equation}\label{LKrep}
\exp\left(-\frac{u^2}{2}+
    \int_0^\infty\dfrac{1}{x(2\sinh(\tfrac{x}{2}))^2}\left(e^{\ii u x}
      -1-\ii u x+\dfrac{u^2x^2}{2}\right)dx\right).
\end{equation}
\par
This formula, for a probabilist, is very strongly reminiscent of the
\emph{L\'evy-Khintchine representation} for the characteristic
function of infinitely divisible distributions. Since the gamma
variables are themselves infinitely divisible, it is natural to wonder
whether this example has natural generalizations to other such random
variables. We will now see that this is indeed the case.
\end{example}

\subsection{Infinitely divisible distributions}\label{ss-ID}

In this section, we will refine Theorem~\ref{thm:1}, by finding an
expression for the limiting function $\Phi(u)$. The context in which
we do this is that of \emph{infinitely divisible distributions}. Since
readers with number-theoretic background, in particular, may not be
familiar with this theory, we first recall some basic facts, referring
to the standard textbooks~\cite{breiman} and~\cite{Sato} for proofs
and further details.

\begin{defn}[\cite{Sato}, p. 31]
  Denote by $\mu^{n*}$ the $n$-fold convolution of a probability
  measure $\mu$ with itself. The probability measure $\mu$ on
  $\mathbb{R}$ is said to be infinitely divisible if for any positive
  integer $n$, there is a probability measure $\mu_n$ on $\mathbb{R}$
  such that $\mu=\mu_n^{n*}$.
\end{defn}

\begin{thm}[\cite{Sato}, Theorem 8.1, p. 37]
The following properties hold:
\par
\emph{(1)} If $\mu$ is an infinitely divisible distribution on
  $\Rr$, then for $u\in\Rr$, we have
\begin{multline}\label{levykintchine}
  \widehat{\mu}(u)\equiv \int_{-\infty}^\infty e^{\ii ux}\mu(\dd x)
  =\exp\left[-\frac{1}{2}\sigma u^2+\ii\beta u\right.
  \\
  \left.+\int_\mathbb{R}\left(e^{\ii ux}-1-\ii
      ux\I_{|x|\leq1}\right)\nu(\dd x)\right]
\end{multline}
where $\sigma\geq0$, $\beta\in\RR$ and $\nu$ is a measure on $\RR$,
called the L\'evy measure, satisfying
\begin{equation}\label{mesurelevy}
  \nu(\{0\})=0 \text{ and } \int_\RR \left(x^2\wedge1\right)\nu(\dd x)<\infty.
\end{equation}
\par
\emph{(2)} The representation of $\widehat{\mu}(u)$
in~\emph{(\ref{levykintchine})} by $\sigma$, $\beta$ and $\nu$ is
unique.
\par
\emph{(3)} Conversely, if $\sigma\geq0$, $\beta\in\RR$ and $\nu$ is a
measure satisfying~\emph{(\ref{mesurelevy})}, then there exists an
infinitely divisible distribution $\mu$ whose characteristic function
is given by~\emph{(\ref{levykintchine})}.
\par
The parameters $(\sigma,\beta, \nu)$ are called the generating triplet
of $\mu$.
\end{thm}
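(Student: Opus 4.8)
The plan is to establish the three assertions in turn, treating the converse~(3) and the uniqueness~(2) first, since they are the cleaner parts, and reserving the forward direction~(1) as the substantial step. Throughout I write $\psi=\log\widehat{\mu}$ for the continuous logarithm of a non-vanishing characteristic function (the \emph{L\'evy exponent}).

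For the converse~(3), I would build $\mu$ by approximation. When $\nu$ is a \emph{finite} measure, the right-hand side of~(\ref{levykintchine}) is, up to the Gaussian factor $\exp(-\demi\sigma u^2)$ and the drift $\exp(\ii\beta u)$, precisely the characteristic function of a compound Poisson variable, and all three factors are manifestly infinitely divisible, so their product is as well. For general $\nu$ satisfying~(\ref{mesurelevy}) I would truncate, setting $\nu_\eps=\nu\I_{\{|x|>\eps\}}$, form the corresponding characteristic functions $\widehat{\mu_\eps}$, and let $\eps\to0$. The integrability condition $\int(x^2\wedge1)\,\nu(\dd x)<\infty$ guarantees that the exponents converge uniformly on compact sets, so the limit is continuous at $0$; L\'evy's continuity theorem then identifies it as the characteristic function of a genuine probability measure $\mu$, and infinite divisibility passes to the limit (the convolution roots of $\mu_\eps$ converge to convolution roots of $\mu$).

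For the uniqueness~(2), I would isolate $\nu$ by a smoothing trick. Given $\widehat{\mu}$, consider
$$
h(u)=\psi(u)-\demi\int_{-1}^{1}\psi(u+s)\,\dd s.
$$
A direct computation shows that the quadratic term $-\demi\sigma u^2$ contributes only a constant, the drift term $\ii\beta u$ cancels entirely, and, using $\demi\int_{-1}^{1}e^{\ii sx}\,\dd s=\sin x/x$, the L\'evy integral collapses to
$$
h(u)=\text{const}+\int_{\Rr}e^{\ii ux}\Bigl(1-\frac{\sin x}{x}\Bigr)\nu(\dd x).
$$
Since $1-\sin x/x$ is non-negative, behaves like $x^2/6$ near $0$ and tends to $1$ at infinity, condition~(\ref{mesurelevy}) makes $(1-\sin x/x)\,\nu(\dd x)$ a \emph{finite} measure, and $h$ is its Fourier transform up to the constant. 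Uniqueness of Fourier transforms of finite measures pins down $(1-\sin x/x)\,\nu$, hence $\nu$ itself (as $\nu(\{0\})=0$ and the weight is strictly positive away from the origin); the constants $\sigma$ and $\beta$ are then read off by subtraction.

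The forward direction~(1) is where the real work lies, and I expect it to be the main obstacle. First I would prove that $\widehat{\mu}$ never vanishes: writing $\mu=\mu_n^{n*}$ gives $\widehat{\mu}=\widehat{\mu_n}^{\,n}$, so $|\widehat{\mu_n}|^2=|\widehat{\mu}|^{2/n}$ converges pointwise to the $\{0,1\}$-valued function equal to $1$ exactly where $\widehat{\mu}\neq0$; since this limit equals $1$ near the origin, where $\widehat{\mu}$ is close to $1$, L\'evy's continuity theorem forces it to be a continuous characteristic function, hence identically $1$, so $\widehat{\mu}$ has no zeros. This legitimizes $\psi=\log\widehat{\mu}$ as a continuous function and yields $n(\widehat{\mu_n}(u)-1)\to\psi(u)$ locally uniformly. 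Writing $n(\widehat{\mu_n}(u)-1)=\int(e^{\ii ux}-1)\,\rho_n(\dd x)$ with $\rho_n=n\mu_n$, the task becomes showing that the finite measures $\tfrac{x^2}{1+x^2}\rho_n(\dd x)$ are tight and converge, along a subsequence, to a finite measure whose disintegration produces both the L\'evy measure $\nu$ away from the origin and the Gaussian coefficient $\sigma$ as the mass concentrating at $0$, while the drift $\beta$ arises from the centering correction $\ii ux\I_{|x|\le1}$. Establishing this tightness and correctly separating the atom at the origin from the rest of the limiting canonical measure is the delicate heart of the argument; once it is done, passing to the limit in the integral representation and invoking~(2) to discard the subsequence completes the proof.
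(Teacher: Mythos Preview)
The paper does not prove this theorem: it is quoted verbatim from Sato's textbook (Theorem~8.1) as background material, with the explicit disclaimer that the reader is referred to~\cite{breiman} and~\cite{Sato} ``for proofs and further details''. There is therefore no proof in the paper to compare your proposal against.

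That said, your sketch is a sound outline of the standard argument and would be accepted in most contexts. The approximation by compound Poisson laws for~(3), the smoothing trick $\psi(u)-\demi\int_{-1}^{1}\psi(u+s)\,\dd s$ to isolate the L\'evy measure for~(2), and the canonical-measure compactness argument for~(1) are all classical; the one place where you should be slightly more careful is the claim that the limit of $|\widehat{\mu_n}|^2$ is forced to be identically~$1$ by L\'evy's theorem --- you need to argue first that the limit is continuous (e.g.\ by showing the roots $\mu_n$ form a tight family, or by noting that $|\widehat{\mu}|^{2/n}\to 1$ uniformly on a neighbourhood of~$0$ and invoking the continuity theorem in the form that allows a $\{0,1\}$-valued pointwise limit). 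Otherwise the proposal is fine.
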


\begin{rem}
  If we compare~(\ref{LKrep}) with~(\ref{levykintchine}), we see that
  the formula~(\ref{LKrep}) arising from the mod-Gaussian convergence
  in Example~\ref{ex-1} is \emph{not} an actual L\'evy-Khintchine
  formula, because the function $x^2\wedge 1$ is not integrable with
  respect to the measure $\frac{\dd x}{x(2\sinh(x/2))^2}$ (although
  $|x|^3\wedge1$ is); this explains why an additional second order
  term is required in the integrand.
\end{rem}

\begin{rem}
  The theorem above is the usual representation of the Fourier
  transform of an infinitely divisible probability distribution. There
  are many other ways of getting an integrable integrand with respect
  to the L\'evy measure $\nu$, and this will be important for us (see
  the discussion in~\cite[p. 38]{Sato}). 
\par
Let $h$ be a {\em truncation function}, that is a real function on
$\RR$, bounded, with conpact support, and such that $h(x)=x$ on a
neighborhood of $0$. Then for every $u\in\RR$, $x\mapsto(e^{\ii
  ux}-1-uh(x))$ is integrable with respect to $\nu$, and
(\ref{levykintchine}) may be rewritten as:
\begin{equation}\label{levykintchinetronc}
  \widehat{\mu}(u)=
\exp\left[-\frac{1}{2}\sigma u^2+\ii\beta_h u
+\int_\mathbb{R}\left(e^{\ii ux}-1-\ii uh(x)\right)\nu(\dd x)\right]
\end{equation}
where
$$
\beta_h=\beta+\int_{-\infty}^\infty(h(x)-x\I_{|x|\leq1})\nu(\dd x).
$$ 
\par
The triplet $(\sigma, \beta_h,\nu)$ is called the generating triplet
of $\mu$ with respect to the truncation function $h$.
\end{rem}

One can also express the moments of $\mu$ in terms of the L\'evy
measure $\nu$.  This is dealt with in Section 25 (p. 159 onwards)
in~\cite{Sato}. For example one can show that $\int_\RR|x|\mu(\dd
x)<\infty$ if and only if $\int_{|x|>1}|x|\nu(dx)<\infty$. More
generally, the measure $\mu$ admits a moment of order $n$ if and only
if $\int_{|x|>1}|x|^n\nu(\dd x)<\infty$; in this case, the cumulants
$(c_k)$ are related to the moments of $\nu$ as follows:
\begin{align*}
  c_1&\equiv\int_\RR x\mu(\dd x) = \gamma+\int_{|x|>1}x\nu(\dd x), \\
  c_2&\equiv\int_\RR x^2\mu(\dd x)-c_1^2 = 
\sigma+\int_{-\infty}^\infty x^2\nu(\dd x), \\
  c_k &= \int_{-\infty}^\infty x^k\nu(\dd x),\quad
\text{ for } 
\quad
3\leq k\leq n.
\end{align*}
\par
In particular, if $\mu$ admits a first moment, then
$\int_{|x|>1}|x|\nu(\dd x)<\infty$ and we can write:
\begin{equation}
\widehat{\mu}(u)
=\exp\left[-\frac{1}{2}\sigma u^2+\ii c_1 u
+\int_\mathbb{R}\left(e^{\ii ux}-1-\ii ux\right)\nu(\dd x)\right].
\end{equation}
\par

\subsection{Mod-Gaussian convergence in the case of infinitely
  divisible variables}

Motivated by Example~\ref{ex-1}, we now consider the setting of
Section~\ref{sec-limit} when the random variables forming the
triangular array $(X_i^n)$ are infinitely divisible. All other
assumptions (concerning expectation, variance, third moment) remain in
force; recall that $\mu_n$ is the law of the $n$-th row of the
array and $\phi_n$ its characteristic function.
\par
We will see that, in fact, when the probability measures $\mu_n$ are
infinitely divisible, we can give an explicit representation of the
limiting function $\Phi$ of Theorem~\ref{thm:1} in terms of the
generating triplets for the measures $\mu_n$. 
\par
Indeed, using the results recalled in subsection \ref{ss-ID}, it is
easily seen that in the current situation, we can write
\begin{equation}\label{mun:1}
\phi_n(u)=\exp(\psi_n(u)),
\end{equation}
where
\begin{equation}\label{mun:2}
\psi_n(u)=-\dfrac{\sigma_n u^2}{2}+
\int_{-\infty}^{\infty}\left(e^{\ii u x}-1-\ii u x\right)\nu_n(\dd x),
\end{equation}
and
\begin{gather}
  \int_{-\infty}^{\infty}x^2\nu_n(\dd x)=1-\sigma_n\in [0,1], 
\quad\quad
  \int_{-\infty}^{\infty}|x|^3\nu_n(\dd x)<\infty, \\
  \sum_{n=1}^{\infty}\dfrac{1}{n^2}\int_{-\infty}^{\infty}|x|^3\nu_n(\dd
    x)<\infty. \label{mun:5}
\end{gather}

We now find a generalization of~(\ref{LKrep}) in the present situation.

\begin{thm}\label{thm:2} In the setting of Theorem \ref{thm:1},
assume further that the probability measures $\mu_n$ are infinitely
  divisible and satisfy the conditions \emph{(\ref{mun:1})}
  up to~\emph{(\ref{mun:5})}. Then the sequence $(Z_N)$ strongly converges
in the mod-Gaussian sense, with the parameters $(0,H_N)$ and
limiting function $\Phi=e^\Psi$, where
$$
\Psi(u)=\int_{-\infty}^{\infty}\left(e^{\ii u x}-1-\ii u x
+\dfrac{u^2 x^2}{2}\right)\nu(\dd x)
$$
and\begin{equation}
\nu=\sum_{n=1}^{\infty}n\nu'_n,
\end{equation}
and the measures $\nu'_n$ are defined by
$$
\nu'_n(A)=\int_{-\infty}^{\infty}\I_{A}(x/n)\nu_n(\dd x),
$$
for any Borel set $A$. Consequently, $\nu$ is a positive measure which
satisfies $\nu(\{0\})=0$ and $\int_{-\infty}^{\infty}|x|^3\nu(\dd
x)<\infty$.
\end{thm}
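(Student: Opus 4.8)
The plan is to compute the limiting function explicitly, leveraging the fact that Theorem~\ref{thm:1} already yields locally uniform convergence of $G_N(u) = e^{u^2 H_N/2}\,\E[e^{\ii u Z_N}]$; it therefore suffices to identify the limit. Since each $\mu_n$ is infinitely divisible, $\phi_n$ never vanishes and equals $e^{\psi_n}$ for the canonical exponent $\psi_n$ of~(\ref{mun:2}); as the power $n$ is an integer there is no branch ambiguity, so $\phi_n(u/n)^n = e^{n\psi_n(u/n)}$ and
$$
\log G_N(u) = \frac{u^2 H_N}{2} + \sum_{n=1}^{N} n\,\psi_n(u/n).
$$
The entire argument reduces to analysing this sum.

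First I would rewrite each summand through the pushforward measure $\nu'_n$. The change of variables $y = x/n$ gives
$$
n\,\psi_n(u/n) = -\frac{\sigma_n u^2}{2n} + n\int_{-\infty}^{\infty}\bigl(e^{\ii u y} - 1 - \ii u y\bigr)\,\nu'_n(\dd y).
$$
Next, using $H_N = \sum_{n\le N} 1/n$ to distribute the Gaussian factor term by term, the residual quadratic pieces $\tfrac{(1-\sigma_n)u^2}{2n}$ must be matched against the missing second-order term. Here the normalisation $\int x^2\,\nu_n(\dd x) = 1-\sigma_n$, which reads $n^2\int y^2\,\nu'_n(\dd y) = 1-\sigma_n$ after the same change of variables, shows that $\tfrac{(1-\sigma_n)u^2}{2n} = n\int \tfrac{u^2 y^2}{2}\,\nu'_n(\dd y)$. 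The two contributions therefore combine into the compensated integrand, yielding
$$
\log G_N(u) = \sum_{n=1}^{N} n\int_{-\infty}^{\infty}\Bigl(e^{\ii u y} - 1 - \ii u y + \frac{u^2 y^2}{2}\Bigr)\,\nu'_n(\dd y),
$$
which is exactly the partial sum defining $\Psi(u)$ once one recognises $\nu = \sum_n n\nu'_n$.

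It remains to pass to the limit and to record the properties of $\nu$, both of which follow from a single tail estimate. Writing $g_u(y) = e^{\ii u y}-1-\ii u y + \tfrac{u^2 y^2}{2}$, the Taylor bound $|g_u(y)|\le \tfrac16|u|^3|y|^3$ together with $n\int|y|^3\,\nu'_n(\dd y) = \tfrac{1}{n^2}\int|x|^3\,\nu_n(\dd x)$ gives, for $|u|\le A$,
$$
\sum_{n=1}^{\infty} n\int_{-\infty}^{\infty}|g_u(y)|\,\nu'_n(\dd y) \le \frac{A^3}{6}\sum_{n=1}^{\infty}\frac{1}{n^2}\int_{-\infty}^{\infty}|x|^3\,\nu_n(\dd x) < \infty
$$
by~(\ref{mun:5}). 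Thus the series for $\log G_N$ converges absolutely and uniformly on $[-A,A]$, so $\log G_N \to \Psi$ and $G_N \to e^{\Psi}$ locally uniformly, confirming strong mod-Gaussian convergence with parameters $(0,H_N)$ and $\Phi = e^{\Psi}$. Running the same estimate with $|y|^3$ in place of $g_u$ (and monotone convergence) gives $\int|x|^3\,\nu(\dd x) = \sum_n \tfrac{1}{n^2}\int|x|^3\,\nu_n(\dd x) < \infty$, while $\nu(\{0\}) = 0$ since no $\nu_n$ charges the origin.

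The hard part is conceptual rather than technical: one must see that the artificially added Gaussian variance $u^2 H_N/2$ aligns term by term with the quadratic corrections forced by the variance-one normalisation $\int x^2\nu_n = 1-\sigma_n$, producing precisely the degree-three-vanishing integrand that is $\nu$-integrable. Once this telescoping is set up correctly, the only points needing care are the change of variables defining $\nu'_n$ and the interchange of summation and integration, which is legitimate by Tonelli applied to the absolute bound above.
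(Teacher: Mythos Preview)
Your argument is correct and follows essentially the same route as the paper: both proofs compute $\log G_N(u)$ explicitly using $\phi_n=e^{\psi_n}$, perform the change of variables $y=x/n$ to pass to $\nu'_n$, absorb the leftover quadratic term via the variance normalisation $\int x^2\nu_n=1-\sigma_n$, and conclude by the third-moment bound~(\ref{mun:5}). Your write-up is simply more explicit about each of these steps (in particular the justification that $\phi_n^n=e^{n\psi_n}$ without branch issues, which the paper handles implicitly by extending $H_{M,N}$ down to $M=0$ in the infinitely divisible case).
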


\begin{proof}
  With the notations of the proof of Theorem \ref{thm:1}, and
  combining (\ref{eqcvuni}), (\ref{defHMN}) and
  (\ref{mun:1})-(\ref{mun:5}) we have:
\begin{equation}
H_{0,N}(u)=\sum_{n=1}^N n\int_{-\infty}^{\infty}\left(e^{\ii u x}-1
-\ii u x+\dfrac{u^2 x^2}{2}\right)\nu'_n(\dd x)
\end{equation}
and the result of the theorem follows immediately, with
$$
\int_{-\infty}^{\infty}|x|^3\nu(\dd x)
=\sum_{n=1}^{\infty}\frac{1}{n^2}
\int_{-\infty}^{\infty}|x|^3\nu_n(\dd x).
$$
\end{proof}


\subsection{A criterion for mod-Gaussian convergence}

We now want to prove a general result characterizing the existence of
mod-Gaussian convergence of a sequence $(Z_N)$ such that the random
variables $Z_N$ are infinitely divisible.  This is the analogue
of the classical criterion for convergence in distribution of infinitely
divisible random variables due to Gnedenko and Kolmogorov
(see for example ~\cite[Th. 8.7]{Sato}).

\begin{thm}\label{thm:3}
  Let $(Z_N)$ be a sequence of real-valued random variables whose
  respective laws $\mu_N$ are infinitely divisible, with generating
  triplets $(\sigma_N,b_N,\nu_N)$ relative to a fixed continuous
  truncation function $h$.
\par
Then we have mod-Gaussian strong convergence \emph{if and only
if}\, the following two conditions hold:
\par
\emph{(1)} The sequence
$\kappa_N=\int_{-\infty}^{\infty}h(x)^3\nu_N(\dd x)$
converges to a finite limit $\kappa$;
\par
\emph{(2)} There exists a nonnegative measure $\nu$ satisfying
$$
\nu(\{0\})=0,\quad
\int_{\Rr}{(x^4\wedge1 )\nu(\dd x)}<+\infty
$$
and such that $\nu_N(f)\to\nu(f)$ for any continuous function $f$ with
$|f(x)|\leq C(x^4\wedge1)$ for $x\in\RR$ and some constant $C\geq 0$.
\par
Under these conditions, one may take the parameters
\begin{equation}\label{eq-select}
\beta_N=b_N,\quad\quad
\gamma_N= \sigma_N+\nu_N(h^2),
\end{equation}
and the limiting function is then $\Phi=\exp(\Psi)$, where
\begin{equation}
  \Psi(u)=-\ii\dfrac{u^3}{6}\kappa+
  \int_{-\infty}^{\infty}\left(e^{\ii u x}-1-\ii u h(x)+\dfrac{u^2 h(x)^2}{2}+
    \ii\dfrac{u^3 h(x)^3}{6}\right)\nu(\dd x). \label{rep:2}
\end{equation}
\end{thm}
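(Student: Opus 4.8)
The plan is to reduce everything to the convergence of the ``canonical log''
$$\Psi_N(u)=\int_{-\infty}^{\infty}\Bigl(e^{\ii ux}-1-\ii u h(x)+\tfrac{u^2h(x)^2}{2}\Bigr)\nu_N(\dd x),$$
and to show that the two measure-theoretic hypotheses are exactly what is needed to control it. First I would insert the truncated Lévy--Khintchine representation (\ref{levykintchinetronc}) for $\E[e^{\ii u Z_N}]=\widehat{\mu}_N(u)$ into the left side of (\ref{M}) with the proposed parameters $\beta_N=b_N$, $\gamma_N=\sigma_N+\nu_N(h^2)$. A direct cancellation of the drift and Gaussian terms shows that $e^{-\ii u\beta_N+u^2\gamma_N/2}\widehat{\mu}_N(u)=\exp(\Psi_N(u))$, so strong mod-Gaussian convergence with these parameters is equivalent to locally uniform convergence of $\exp(\Psi_N)$. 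By Proposition~\ref{prop-formal}(1), any other admissible parameter choice differs from this one by convergent sequences, so it suffices to treat the canonical parameters.

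The key analytic observation is that the integrand of $\Psi_N$ vanishes only to third order at the origin, behaving like $-\ii u^3x^3/6$; hence it is \emph{not} dominated by $x^4\wedge1$, and subtracting the cubic term is precisely what forces that weight to appear. Concretely I would write
$$\Psi_N(u)=\int_{-\infty}^{\infty}\tilde g_u(x)\,\nu_N(\dd x)-\ii\frac{u^3}{6}\kappa_N,\qquad \kappa_N=\int h(x)^3\nu_N(\dd x),$$
where $\tilde g_u(x)=e^{\ii ux}-1-\ii uh(x)+\tfrac{u^2h(x)^2}{2}+\ii\tfrac{u^3h(x)^3}{6}$ is exactly the integrand of $\Psi$ in (\ref{rep:2}). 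Since $h(x)=x$ near $0$, $\tilde g_u$ is $O(x^4)$ at the origin, and since $h$ has compact support $\tilde g_u$ reduces to the bounded function $e^{\ii ux}-1$ for large $|x|$; hence $|\tilde g_u(x)|\leq C_A(x^4\wedge1)$ for $|u|\leq A$. For the ``if'' direction this makes both pieces converge: condition (1) handles $\kappa_N$, and condition (2), applied to the real and imaginary parts of $\tilde g_u$, gives $\int\tilde g_u\,\dd\nu_N\to\int\tilde g_u\,\dd\nu$. To upgrade pointwise convergence of $\Psi_N$ to local uniformity I would split the integral at $|x|=1$: on $\{|x|\leq1\}$ the family $\{\tilde g_u\}_{|u|\leq A}$ is equicontinuous and uniformly dominated by $C_A(x^4\wedge1)$, while on $\{|x|>1\}$ the convergence of $\int(e^{\ii ux}-1)\,\dd\nu_N$ is uniform on $[-A,A]$ because the finite tail measures converge weakly and tightly. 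Continuity of $\exp$ then yields strong convergence to $\Phi=\exp\Psi$.

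The hard direction is the converse, and I expect it to be the main obstacle. Here I cannot read $\kappa$ and $\nu$ off a Taylor expansion of the limit, because the $Z_N$ need not possess any moments and so $\Psi_N$ need not even be differentiable at $0$. Instead I would argue in the style of the Gnedenko--Kolmogorov criterion (\cite[Th. 8.7]{Sato}): from the assumed locally uniform convergence of $\exp(\Psi_N)$, hence of $\Psi_N$ on a neighbourhood of $0$ where the limit is non-vanishing, I would first extract tightness and uniform integrability of the finite measures $(x^4\wedge1)\nu_N(\dd x)$, controlling $\int(x^4\wedge1)\,\dd\nu_N$ by suitable real combinations of the (now bounded) quantities $\Psi_N(u)$. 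This is the step where the delicate cancellation between the term $\tfrac12(\gamma_N-\sigma_N)u^2$ and the negative integral $\int(\cos ux-1)\,\dd\nu_N$ must be handled carefully, and it is where the real work lies.

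Once tightness is in hand, Helly's selection theorem produces a candidate limit $\nu$ with $\int(x^4\wedge1)\,\dd\nu<\infty$, and a uniqueness argument for the modified exponent (in the spirit of the uniqueness of the Lévy--Khintchine representation) identifies the limit, promoting subsequential to full convergence and giving condition (2). Finally condition (1) follows by solving $-\ii\tfrac{u^3}{6}\kappa_N=\Psi_N(u)-\int\tilde g_u\,\dd\nu_N$ for a single $u\neq0$ and passing to the limit, while the explicit formula for $\Phi$ then simply drops out of the ``if'' computation carried out above.
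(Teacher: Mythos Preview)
Your sufficiency argument is essentially the paper's: the same decomposition $\Psi_N(u)=\nu_N(k_u)-\ii\tfrac{u^3}{6}\kappa_N$ with $k_u=\tilde g_u$, the same bound $|k_u(x)|\leq C_u(x^4\wedge1)$, and a tightness/equicontinuity argument for local uniformity (the paper uses a cut-off $g_A,h_A$ and bounds $\partial_u k_u$ rather than splitting at $|x|=1$, but the content is the same).

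For the converse, however, there are two genuine gaps. First, your reduction ``it suffices to treat the canonical parameters'' via Proposition~\ref{prop-formal}(1) is circular in this direction: that proposition lets you pass between two parameter sets for each of which mod-Gaussian convergence is \emph{already known}, but here you only know convergence for some unspecified $(\beta_N,\gamma_N)$, not for $(b_N,\sigma_N+\nu_N(h^2))$. So the exponent you must work with is
\[
\Psi_N(u)=\ii u(b_N-\beta_N)-\tfrac{u^2}{2}\bigl(\sigma_N+\nu_N(h^2)-\gamma_N\bigr)-\ii\tfrac{u^3}{6}\kappa_N+\nu_N(k_u),
\]
carrying two unknown sequences in its linear and quadratic parts. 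Second, and more importantly, the phrase ``suitable real combinations of the $\Psi_N(u)$'' hides the whole point. You must simultaneously (i) annihilate those unknown $u$ and $u^2$ coefficients and (ii) produce a quantity comparable to $\int(x^4\wedge1)\,\nu_N(\dd x)$ \emph{from below}, so that its boundedness yields tightness. A generic linear combination of values of $\cos(ux)$ that kills the $x^2$ term will typically oscillate in sign for large $|x|$ and will not give a lower bound. The paper's device is to integrate:
\[
A_N(u)=\int_{-\delta}^{\delta}\bigl(\Psi_N(u+y)-\Psi_N(u)\bigr)\,\vartheta(y)\,\dd y,\qquad \vartheta(y)=\tfrac{\delta^7}{105}(5y^2-3\delta^2),
\]
chosen so that $\int\vartheta(y)y\,\dd y=\int\vartheta(y)y^2\,\dd y=0$. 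This kills the unknown linear and quadratic terms outright and leaves $A_N(u)=\int e^{\ii ux}g(x)\,\nu_N(\dd x)$ with an explicit $g$ satisfying $C^{-1}(x^4\wedge1)\leq g(x)\leq C(x^4\wedge1)$; now $A_N$ is the Fourier transform of the finite positive measure $g\,\nu_N$, and its convergence on a neighbourhood of $0$ gives tightness directly. This is the idea your sketch is missing.

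Finally, your extraction of $\kappa$ by ``solving for a single $u\neq0$'' does not work as stated, because the unknown linear term $\ii u(b_N-\beta_N)$ is still present. The paper instead looks at $\Imag\Psi_N(u)$, notes that after subtracting $\nu_N(\ell_u)$ (with $\ell_u(x)=\sin(ux)-uh(x)+\tfrac{u^3}{6}h(x)^3$, which is again $O(x^4\wedge1)$) the remainder is exactly $u(b_N-\beta_N)-\tfrac{u^3}{6}\kappa_N$; passing to the limit along the good subsequence forces this to equal $au+bu^3$, whence $\kappa_{N_k}\to -6b$. Only after conditions (1) and (2) are verified along a subsequence does the sufficiency direction show $\Phi$ is non-vanishing everywhere, allowing the argument to be rerun for the full sequence.
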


\begin{proof}
  The left-hand side of (\ref{M}) can be written, in our case, as
  $\exp(\Psi_N(u))$ where
\begin{align}
\hskip-3mm  \Psi_N(u)& =\ii u(b_N-\beta_N)-\frac{u^2}{2}(\sigma_N-\gamma_N)
  +\int_{\Rr} (e^{\ii u x}-1-\ii u h(x))\nu_N(\dd x) \label{in1}\\
  & = \ii u(b_N-\beta_N)-\frac{u^2}{2}(\sigma_N+\nu_N(h^2)-\gamma_N)-
  \ii\frac{u^3}{6}\kappa_N+\nu_{N}(k_u), \label{in2}
\end{align}
where
\begin{equation}\label{in3}
k_u(x)=e^{\ii u x}-1-\ii u h(x)
+\dfrac{u^2 h(x)^2}{2}+\ii\dfrac{u^3 h(x)^3}{6}.
\end{equation}
\par
(a) First assume that Conditions (1) and (2) above hold, and define
$\beta_N$ and $\gamma_N$ by~(\ref{eq-select}). Since $h$ is bounded
with compact support and $h(x)=x$ in a neighborhood of $0$, it is
easily seen that $|k_u(x)|\leq C_u (x^4\wedge1)$, where $C_u$ is a
constant depending on $u$. It thus follows from (\ref{in2}),
(\ref{eq-select}) and Conditions (1) and (2) that
$\Psi_N(u)\to\Psi(u)$, as defined by~(\ref{rep:2}).  Thus, it only
remains to prove that we have $\nu_N(k_u)\to\nu(k_u)$ uniformly (with
respect to $u$) on all compact sets $K\subset\RR$. For this, we use
fairly standard arguments.

Let $K$ be a compact subset of $\RR$, and let $\eps>0$ be
fixed. For any $A>1$, let
$$
g_A(x)=\begin{cases}
0&\text{ if } -|x|<A\\
\frac{|x|}{A}-1&\text{ if } A\leq |x|\leq 2A\\
1&\text{ if } |x|>A,
\end{cases}
$$
and $h_A=1-g_A$. The function $g_A$ is continuous with $0\leq g_A\leq
x^4\wedge 1$, and $g_A\ra 0$ pointwise as $A\to\infty$. Therefore we
have the limits
\begin{equation}\label{eq-limits}
\lim_{n\ra +\infty}{\nu_n(g_A)}=\nu(g_A),
\quad \lim_{A\ra +\infty}{\nu(g_A)}=0.
\end{equation}
\par
Now write
$$
|\nu_n(k_u)-\nu(k_u)|\leq |\nu_n(g_Ak_u)|+|\nu(g_Ak_u)|+
|\nu_n(h_Ak_u)-\nu(h_Ak_u)|.
$$
\par
There exists a constant $C_K\geq 0$, depending only on $K$, such that 
$$
|k_u(x)g_A(x)|\leq C_Kg_A(x)
$$
for all $u\in \RR$, $x\in \RR$. Consequently, combining the two limits
in~(\ref{eq-limits}), we see that there exists $N_0\geq 1$ and $B>0$
such that, for any $N\geq N_0$, we have
\begin{equation}\label{eq-ga}
|\nu_n(g_Bk_u)|+|\nu(g_Bk_u)|\leq 2\eps,
\end{equation}
uniformly for $u\in \Rr$.
\par
The other term, where now $A=B$ is fixed, is also easily dealt
with. First, $k_uh_B$ is continuous and satisfies $|k_u(x)h_B(x)|\leq
C_u(x^4\wedge1)$, for some $C_u$ depending on $u$, so that by
assumption we have
$$
\lim_{N\ra +\infty}{\nu_N(k_u h_B)}=\nu(k_uh_B),
$$ 
for any $u\in\RR$. In addition, for each fixed $x$, the map
$$
u\mapsto k_u(x)h_B(x)
$$
is differentiable and its derivative is bounded by $D_{K}(x^4\wedge1)$
for $x\in\RR$ and $u\in K$, the constant $D_{K}$ depending only on
$K$. Consequently, the function $u\mapsto\nu_N(k_u h_B)$ is
differentiable on $\Rr$, and its derivative is uniformly bounded on
the compact set $K$. It follows that the family of functions
$(u\mapsto \nu_N(k_uh_B))_N$ is equicontinuous on $K$, and therefore
its pointwise convergence to $\nu(k_uh_B)$ is uniform on $K$. Thus for
some $N_1$, we have
$$
|\nu_n(h_Bk_u)-\nu(h_Bk_u)|\leq \eps
$$
for all $N\geq N_1$ and $u\in K$.  From this and the previous
estimate~(\ref{eq-ga}), the uniform convergence on compact sets
follows.

(b) Conversely, we assume that (\ref{M}) holds, locally uniformly in
$u$, with some parameters $(\beta_N,\gamma_N)$ and limiting
function $\Phi$. Since $\Phi$ is continuous and non-vanishing on
some neighborhood $I=(-2\delta,2\delta)$ of $0$ and $\Phi(0)=1$,
it follows from basic results of complex analysis (see,
e.g., \cite[Lemmas 7.6 and 7.7]{Sato}) that on $I$ we
have $\Phi=\exp(\Psi)$, where $\Psi$ is
continuous with $\Psi(0)=0$, and moreover $\Psi_N$, as defined by
(\ref{in2}), converges to $\Psi$ uniformly on $I$.

We deduce that, if $\theta$ is a locally bounded function on $\Rr$, we
have
\begin{eqnarray}
A_N(u)&=& \int_{-\delta}^{\delta}\left(\Psi_N(u+y)-
\Psi_N(u)\right)\vartheta(y)dy\nonumber\\
&&\to~ A(u)~=~\int_{-\delta}^{\delta}\left(\Psi(u+y)-
\Psi(u)\right)\vartheta(y)dy
\label{B1}
\end{eqnarray}
for all $u$ such that $|u|<\delta$.

Now, we observe that the
orthogonality properties
 \begin{equation}
\int_{-\delta}^{\delta} \vartheta(y)y dy=\int_{-\delta}^\delta
{\vartheta(y)y^2dy}=0
\quad \text{ for }\quad \vartheta(y)=\frac{\delta^7}{105}
~(5y^2-3\delta^2),
\end{equation}
together with Fubini's Theorem allow us to eliminate the terms
involving $h(x)$, $\sigma_N$ and $b_N$ in~(\ref{in1}), to yield
 \begin{equation}\label{r2}
A_N(u)~=~\int_{\Rr} e^{\ii ux}g(x)\nu_N(\dd x),
\end{equation}
where
\begin{eqnarray*}
g(x)&=&
\int_{-\delta}^\delta{\vartheta(y)(e^{ixy}-1)dy}\\
&=&\frac{\delta^7}{105}\left(\frac{8\delta^3}3
+\frac{4\delta^2\sin(\delta x)}x+\frac{20\delta\cos(\delta x)}{x^2}
-\frac{20\sin(\delta x)}{x^3}\right).
\end{eqnarray*}
\par
The function $g$ is continuous and, as checked by straightforward
calculus, satisfies
\begin{equation}\label{eq-a}
g(x)\sim x^4~~\text{ as } x\ra 0,
\quad
C^{-1}(x^4\wedge 1)\leq g(x)\leq C(x^4\wedge 1)\quad
\forall x\in\Rr,
\end{equation}
for some constant $C>0$ (in particular, $g(x)=0$ if and only
if $x=0$). Note that $A_N$ is the Fourier transform of the positive
finite measure $\mu_N(dx)=g(x)\nu_N(dx)$, and the convergence
(\ref{B1}) for all $u$ with $|u|\leq\delta$ implies (see for example
the proof of Theorem 19.1 in \cite{jacodprotter}) that the sequence of
measures $\mu_N$ is relatively compact for the weak convergence (or,
tight).

In particular, there is a subsequence $(\mu_{N_k})$ which
converges weakly to a positive finite measure $\mu$. Then obviously
Condition (2) is satisfied by the sequence $(\nu_{N_k})$, with the
limiting measure $\nu$ defined by
$$
\nu(dx)~=~\dfrac{1}{g(x)}\I_{\RR\backslash\{0\}}\,\mu(dx).
$$
\par
Next, we check that Condition (1) is satisfied by the subsequence
$(\kappa_{N_k})$. To this end, we observe that the imaginary part of
$\Psi_N(u)$, as given by~(\ref{in2}), can be written as follows:
\begin{align*}
\Imag(\Psi_N(u))&=
u(b_N-\beta_N)+\int_{\Rr}(\sin(ux)-uh(x))\nu_N(\dd x)\\
&=u(b_N-\beta_N)
-\dfrac{u^3}{6}~\kappa_N+\nu_N(\ell_u),
\end{align*}
where 
$$
\ell_u(x)=\sin(ux)-uh(x)+\dfrac{u^3h(x)^3}{6}.
$$ 
\par
Since $\Psi_N(u)\to\Psi(u)$, we have
$\Imag(\Psi_N(u))\to\Imag(\Psi(u))$ for all $u\in I$. The
function $\ell_u$ is continuous and also satisfies $|\ell_u(x)|\leq
C_u(x^4\wedge1)$, for some constant $C_u\geq 0$, and hence
$\nu_{N_k}(\ell_u)\to\nu(\ell_u)$. Consequently,
$$
u(b_{N_k}-\beta_{N_k})-\frac{u^3}{6}~\kappa_{N_k}~\to
\Imag(\Psi(u))-\nu(\ell_u)
$$
as $k\ra+\infty$, and for $u\in I$. Then obviously
$\Imag(\Psi(u))-\nu(\ell_u)=au+bu^3$ for some $a$, $b\in\RR$, and it
follows that $\kappa_{N_k}\to
\kappa=6b$. This proves Condition (1) for $(\kappa_{N_k})$.

To conclude the proof, we apply the sufficient condition (a) of our
theorem to the subsequence $(Z_{N_k})$, which implies
that $\Phi=e^\Psi$, where $\Psi$ is given by (\ref{rep:2}). Therefore
$\Phi$ does not vanish, and henceforth we can take $I=\Rr$ in the
previous proof. We deduce that the convergence (\ref{B1}) holds for
all $u\in\Rr$, and then L\'evy's Theorem yields that not only is the
sequence $(\mu_N)$ tight, but it actually converges to a limit
$\mu$. Therefore the previous proof holds for the original sequences
$(\nu_N)$ and $(\kappa_N)$, and we are done.
\end{proof}

We now state as a corollary a weak limit theorem for variables
satisfying the assumptions of Theorem \ref{thm:3}. Of course, (1)
below is a classical result.

\begin{cor}
  Let $Z_N$ be a sequence of infinitely divisible random variables
  satisfying one of the equivalent conditions of Theorem \ref{thm:3},
  with generating triplets $(0,0,\nu_N)$. Then we have:
\par
\emph{(1)} If $\nu_N(h^2)\to\widetilde{\sigma}\in[0,+\infty[$, then
the sequence $(Z_N)$ converges in law to a limit random variable $Z$,
which is necessarily infinitely divisible, with generating triplet
$(0,\sigma,\nu)$ with $\nu$ as in \emph{(\ref{rep:2})},
but in this case $x^2\wedge1$ is integrable with respect to $\nu$,
and $\sigma=\widetilde{\sigma}-\nu(h^2)$.
\par
\emph{(2)} If $\nu_N(h^2)\to+\infty$, then $Z_N/\sqrt{\nu_N(h^2)}$
converges in law to the standard Gaussian random variable
$\mathcal{N}(0,1)$.
\end{cor}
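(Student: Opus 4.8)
The plan is to read off both statements from the strong mod-Gaussian convergence already furnished by Theorem~\ref{thm:3}, combined with L\'evy's continuity theorem, and then in case~(1) to identify the generating triplet of the limit by hand. Since the triplet of $Z_N$ is $(0,0,\nu_N)$, its characteristic function is $\widehat{\mu_N}(u)=\exp\bigl(\int_\RR(e^{\ii ux}-1-\ii u h(x))\,\nu_N(\dd x)\bigr)$, and Theorem~\ref{thm:3} gives strong mod-Gaussian convergence with $\beta_N=0$, $\gamma_N=\nu_N(h^2)$ and limiting function $\Phi=e^{\Psi}$ as in~(\ref{rep:2}); equivalently $G_N(u):=e^{u^2\nu_N(h^2)/2}\,\widehat{\mu_N}(u)\to\Phi(u)$ locally uniformly. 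The two cases then correspond to whether the renormalizing variance $\gamma_N=\nu_N(h^2)$ stays bounded or diverges.

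Case~(2) is the easy half. Writing $\gamma_N=\nu_N(h^2)\to+\infty$, the characteristic function of $Z_N/\sqrt{\gamma_N}$ at a fixed $u$ equals $\widehat{\mu_N}(u/\sqrt{\gamma_N})=e^{-u^2/2}\,G_N(u/\sqrt{\gamma_N})$. Since $u/\sqrt{\gamma_N}\to0$ and $G_N\to\Phi$ locally uniformly with $\Phi$ continuous and $\Phi(0)=1$, the factor $G_N(u/\sqrt{\gamma_N})\to1$; hence the characteristic function converges to $e^{-u^2/2}$ and L\'evy's theorem gives convergence in law to $\mathcal{N}(0,1)$.

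For case~(1), with $\nu_N(h^2)\to\widetilde\sigma<\infty$, one has $e^{-u^2\nu_N(h^2)/2}\to e^{-u^2\widetilde\sigma/2}$, so $\widehat{\mu_N}(u)=e^{-u^2\nu_N(h^2)/2}\,G_N(u)\to e^{-u^2\widetilde\sigma/2}\Phi(u)=:\widehat\mu(u)$, locally uniformly. This limit is continuous at $0$ with value $1$, so L\'evy's theorem yields $Z_N\to Z$ in law, where $Z$ has characteristic function $\widehat\mu$ and is infinitely divisible as a weak limit of infinitely divisible laws. To pin down the triplet I would first show that $x^2\wedge1$ is $\nu$-integrable: choosing continuous cut-offs $\phi_\eps$ with $0\le\phi_\eps\le h^2$, $\phi_\eps=h^2$ outside a neighbourhood of $0$ and $\phi_\eps\equiv0$ near $0$ (so that $\phi_\eps\le C_\eps(x^4\wedge1)$), Condition~(2) of Theorem~\ref{thm:3} gives $\nu(\phi_\eps)=\lim_N\nu_N(\phi_\eps)\le\widetilde\sigma$, whence $\nu(h^2)\le\widetilde\sigma$ by monotone convergence as $\eps\downarrow0$ (using $\nu(\{0\})=0$). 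Together with $\int(x^4\wedge1)\,\nu<\infty$, which controls $\nu$ away from $0$, and the fact that $h(x)=x$ near $0$, this gives $\int(x^2\wedge1)\,\nu<\infty$ and $\sigma:=\widetilde\sigma-\nu(h^2)\ge0$.

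With this integrability the quadratic and cubic correction terms in~(\ref{rep:2}) may be separated off, and a short computation gives $\log\widehat\mu(u)=-\tfrac12\sigma u^2+\tfrac{\ii}{6}(\nu(h^3)-\kappa)u^3+\int_\RR(e^{\ii ux}-1-\ii u h(x))\,\nu(\dd x)$, with $\sigma=\widetilde\sigma-\nu(h^2)$. Because a L\'evy--Khintchine exponent with an $x^2\wedge1$-integrable L\'evy measure grows at most quadratically in $u$ (one bounds $e^{\ii ux}-1-\ii u h(x)$ using $|\sin t-t|\le t^2/2$ and $|1-\cos t|\le t^2/2$), the spurious cubic term must vanish, i.e.\ $\nu(h^3)=\kappa$; what remains is precisely the L\'evy--Khintchine representation with Gaussian variance $\sigma$, zero drift and L\'evy measure $\nu$, as asserted. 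I expect this identification in~(1) to be the main obstacle: deducing the $x^2\wedge1$-integrability of $\nu$ from the mere boundedness of $\nu_N(h^2)$, and eliminating the apparent cubic contribution, whereas the two convergences in law follow almost formally from Theorem~\ref{thm:3} and L\'evy's theorem.
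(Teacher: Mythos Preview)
Your proof is correct and follows the same route as the paper: both deduce everything from the locally uniform convergence $e^{u^{2}\nu_N(h^{2})/2}\E[e^{\ii u Z_N}]\to\Phi(u)$ together with L\'evy's continuity theorem. The paper's own proof is in fact a one-sentence reference to this convergence and leaves all the remaining work to the reader; your write-up simply fills in what the paper omits.

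Where you genuinely go beyond the paper is in case~(1): the paper remarks beforehand that (1) is a classical result and gives no argument for the identification of the generating triplet, whereas you supply one. Your two extra steps---the $\phi_\eps$ cut-off argument giving $\nu(h^{2})\le\widetilde\sigma$ (hence $\int(x^{2}\wedge1)\,\nu<\infty$ and $\sigma\ge0$), and the growth argument forcing $\nu(h^{3})=\kappa$ so that the cubic term disappears---are both sound. For the latter, note that $h^{3}$ is \emph{not} dominated by $x^{4}\wedge1$ near~$0$, so one cannot simply read off $\nu_N(h^{3})\to\nu(h^{3})$ from Condition~(2) of Theorem~\ref{thm:3}; your detour via the quadratic growth of L\'evy--Khintchine exponents is therefore genuinely needed, and it works because both $\log\widehat\mu$ (an infinitely divisible exponent) and $-\tfrac{\sigma}{2}u^{2}+\int(e^{\ii ux}-1-\ii u h(x))\,\nu(\dd x)$ are $O(u^{2})$, forcing their difference $\tfrac{\ii}{6}(\nu(h^{3})-\kappa)u^{3}$ to vanish identically.
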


\begin{proof}
  The results follow from the fact that under our assumptions, we have 
$$
e^{u^2\nu_N(h^2)/2}\E[e^{\ii u Z_N}]~\to~\Phi(u)
$$
locally uniformly for $u\in\Rr$.
\end{proof}

\section{Some examples of mod-Gaussian convergence in
  arithmetic}\label{sec-nt}

In this section, which is largely independent of the previous one, we
give two examples of (unconditional) instances of mod-Gaussian
convergence in analytic number theory. The first is quite elementary
and formal. For the second (involving function fields), we again
summarize briefly the required information to understand the
statements, this time for probabilist readers. In addition,
Section~\ref{ssec-poisson} explains how to interpret the Erd\H{o}s-K\'ac
theorem in terms of mod-Poisson convergence.

\subsection{The arithmetic factor in the moment conjecture for
  $\zeta(1/2+it)$}\label{ssec-arith}

We come back to the moment conjecture~(\ref{momentszeta}) for the
Riemann zeta function, which we recall: we should have
$$
\lim_{T\ra +\infty}{
  \frac{1}{T(\log T)^{\lambda^2}}
\int_0^{T}{|\zeta(\demi+it)|^{2\lambda}dt}}
=A(\lambda)M(\lambda)
$$
for any complex number $\lambda$ such that $\Reel(\lambda)>-1$, where
\begin{align}
M(\lambda)&=
\frac{G(1+\lambda)^2}{G(1+2\lambda)},\quad\quad
\text{$G(z)$ the Barnes double-gamma function},
\\
A(\lambda)&=\prod_{p}{
  \Bigl(1-\frac{1}{p}\Bigr)^{\lambda^2}\Bigl\{
\sum_{m\geq 0}{\Bigl(\frac{\Gamma(m+\lambda)}{m!\Gamma(\lambda)}\Bigr)^2
p^{-m}}\Bigr\}}.
\end{align}
\par
From~(\ref{ML}), it follows that the random matrix factor $M(iu)$, for
$u\in \Rr$, occurs as the limiting function for the mod-Gaussian
convergence of some natural sequence of random variables (namely,
characteristic polynomials of random unitary matrices of growing size
distributed according to Haar measure).  It is thus natural to wonder
whether the arithmetic factor has the same property, since, if that
were the case, the formal properties of mod-Gaussian convergence
(specifically, (3) in Proposition~\ref{prop-formal}) imply that there
exists a sequence of random variables which converges in mod-Gaussian
sense with limiting function $A(iu)M(iu)$. We will show that this is
the case (indeed with strong mod-Gaussian convergence); we believe
this structure of the moments has arithmetic significance, although
the computation we do now (though enlightening) does not yet explain
this.

\begin{prop}
There exists a sequence $(Z_N)$ of positive real-valued random
variables and positive real numbers $\gamma_N>0$ such that
$$
e^{u^2\gamma_N /2}\E(e^{iuZ_N})\ra A(iu)
$$
locally uniformly for $u\in \Rr$. 
\end{prop}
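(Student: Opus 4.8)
The plan is to realize $A(iu)$ as an infinite product of independent Euler factors, each of which is \emph{exactly} a Gaussian-renormalized characteristic function, and then to control the convergence of the product. For each prime $p$ let $U_p=e^{i\Theta_p}$ with $\Theta_p$ uniform on $[0,2\pi)$, the $U_p$ independent, and set
$$
W_p=-\log\bigl|1-p^{-1/2}U_p\bigr|^2=-2\,\Reel\log\bigl(1-p^{-1/2}U_p\bigr).
$$
The first step is an algebraic identity matching $W_p$ to the local factor of $A$: expanding $(1-p^{-1/2}U_p)^{-iu}$ by the generalized binomial series, multiplying by its conjugate factor, and averaging over $\Theta_p$ kills all off-diagonal frequencies and leaves the diagonal sum, so that
$$
\E\bigl[e^{iuW_p}\bigr]=\E\bigl[\,|1-p^{-1/2}U_p|^{-2iu}\,\bigr]=\sum_{m\ge0}\Bigl(\frac{\Gamma(iu+m)}{m!\,\Gamma(iu)}\Bigr)^{2}p^{-m}.
$$
Writing $c_p=-\log(1-1/p)>0$, the local factor of $A$ at $\lambda=iu$ is then $A_p(iu)=(1-\tfrac1p)^{-u^{2}}\E[e^{iuW_p}]=e^{u^{2}c_p}\,\E[e^{iuW_p}]$, i.e. $e^{u^{2}(2c_p)/2}\E[e^{iuW_p}]=A_p(iu)$: each $W_p$ already ``converges mod-Gaussian'' (as a constant sequence in $N$) with parameter $2c_p$ and limiting function $A_p(iu)$.

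Next I would record the two moment facts that drive everything. Since $z\mapsto\log|1-z|$ is harmonic on $|z|<1$, the mean-value property gives $\E[W_p]=0$; and expanding $W_p=2\sum_{k\ge1}k^{-1}p^{-k/2}\cos(k\Theta_p)$ gives $\V(W_p)=2\sum_{k\ge1}k^{-2}p^{-k}=2/p+O(p^{-2})$. Setting $Z_N=\sum_{p\le N}W_p$ and $\gamma_N=\sum_{p\le N}2c_p$ (the summands independent), multiplicativity of characteristic functions of independent variables (Proposition~\ref{prop-formal}(2)) yields the clean product formula
$$
e^{u^{2}\gamma_N/2}\,\E\bigl[e^{iuZ_N}\bigr]=\prod_{p\le N}A_p(iu),
$$
so the statement reduces to the locally uniform convergence $\prod_{p\le N}A_p(iu)\to A(iu)$, which is exactly the convergence of the Euler product defining $A$ asserted in Section~\ref{ssec-arith}.

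Verifying that convergence is the technical heart, and the mechanism is transparent. Taking logarithms, $\log A_p(iu)=u^2c_p+\log\E[e^{iuW_p}]$, and a cumulant expansion gives $\log\E[e^{iuW_p}]=-\tfrac{u^2}{2}\V(W_p)-\tfrac{iu^3}{6}\kappa_3(W_p)+O(u^4p^{-2})$ with $\kappa_3(W_p)=O(p^{-3/2})$. Because $c_p=1/p+O(p^{-2})$ and $\V(W_p)=2/p+O(p^{-2})$, the leading $u^2/p$ terms cancel, $u^2(c_p-\tfrac12\V(W_p))=O(u^2p^{-2})$, so each summand is $O(p^{-3/2})$ uniformly for $u$ in a compact set. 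Hence $\sum_p\log A_p(iu)$ converges absolutely and locally uniformly, the product converges locally uniformly to the continuous function $A(iu)$, and we obtain \emph{strong} mod-Gaussian convergence.

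The main obstacle is the \emph{positivity} requirement on the $Z_N$. The variables $W_p$ take both signs (their essential range is $[-2\log(1+p^{-1/2}),\,-2\log(1-p^{-1/2})]$), and the centering $\E[W_p]=0$ is precisely what makes the limit come out as $A(iu)$ with no residual phase when no mean parameter is used --- indeed $A(iu)=1+O(u^2)$ has vanishing linear part, which forces any construction hitting $A(iu)$ with $\beta_N=0$ to be centered. To manufacture nonnegative variables one must shift each $W_p$ up by its essential infimum $2\log(1+p^{-1/2})$, but $\sum_p\log(1+p^{-1/2})$ \emph{diverges}, so this deterministic shift cannot simply be absorbed: it has to be carried as a mean parameter $\beta_N=\sum_{p\le N}2\log(1+p^{-1/2})$ and compensated. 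I would therefore organize the final step around Proposition~\ref{prop-formal}(1) (uniqueness of parameters only up to convergent sequences), taking $Z_N=\sum_{p\le N}\bigl(W_p+2\log(1+p^{-1/2})\bigr)\ge0$ and tracking the induced phase; reconciling this positive representation with the $\beta_N$-free form of the display is the only genuine subtlety separating the natural signed construction from the positive one demanded here.
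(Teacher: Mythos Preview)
Your construction is essentially identical to the paper's: the paper takes independent $X_p$ uniform on the unit circle, sets $Z_N=\sum_{p\le N}\log|1-p^{-1/2}X_p|^{-2}$, and identifies each factor $\E[e^{iuW_p}]$ with $\hyperg{iu}{iu}{1}{p^{-1}}$ (your diagonal--sum computation is exactly this identity; the paper instead quotes an integral formula from Gradshteyn--Ryzhik). The paper chooses $\gamma_N=2(\gamma+\log\log N)$ via Mertens, while you take $\gamma_N=2\sum_{p\le N}c_p$; these differ by $o(1)$, so by Proposition~\ref{prop-formal}(1) they yield the same limiting function. Your cumulant argument for the locally uniform convergence of $\prod_{p\le N}A_p(iu)$ is more explicit than the paper's, which simply appeals to the absolute and locally uniform convergence of the Euler product for $A$.

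Your worry about positivity is legitimate and well spotted, but it is not a gap in \emph{your} argument relative to the paper: the paper's own proof constructs exactly the same signed variables $Z_N$ and never verifies positivity. The word ``positive'' in the statement appears to be a slip (the subsequent Remark in the paper describes $Z_N$ explicitly as the signed sum $\sum_{p\le N}\log|1-X_p/\sqrt p|^{-2}$). So you should drop the final paragraph about shifting by $2\log(1+p^{-1/2})$: that route genuinely cannot work in the $\beta_N$-free form of the display, as you correctly diagnosed, and the paper does not attempt it either.
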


\begin{proof}
We start by writing $A(iu)$ as a limit
$$
A(iu)=\lim_{N\ra +\infty}{A_1(u,N)A_2(u,N)}
$$
where
\begin{gather}
A_1(u,N)=\prod_{p\leq N}{(1-p^{-1})^{-u^2}},\\
A_2(u,N)=\prod_{p\leq N}{
\sum_{m\geq 0}{\Bigl(\frac{\Gamma(m+iu)}{m!\Gamma(iu)}\Bigr)^2
p^{-m}}
}=\prod_{p\leq N}{\hyperg{iu}{iu}{1}{p^{-1}}},
\end{gather}
by the definition of the Gauss hypergeometric function
$$
\hyperg{a}{b}{c}{z}=\sum_{k\geq 0}{
\frac{a(a+1)\cdots (a+k-1)b(b+1)\cdots (b+k-1)}
{c(c+1)\cdots (c+k-1)}\frac{z^k}{k!}}.
$$
\par
We now recall that
$$
\prod_{p\leq N}{(1-p^{-1})}\sim e^{-\gamma}(\log N)^{-1}
$$
as $N\ra +\infty$, by the Mertens formula (see,
e.g.,~\cite[Th. 429]{hardy-wright}). Thus, it is natural to consider
$$
\gamma_N=2(\gamma+\log\log N)>0,\quad\quad N\geq 2,
$$
because we then have
$$
\lim_{N\ra +\infty}{e^{u^2 \gamma_N /2}A_2(u,N)}
=A(iu).
$$
\par
It is then enough to show that, for any $N\geq 2$, the factor
$A_2(u,N)$ is the characteristic function $\E(e^{iuZ_N})$ of a random
variable $Z_N$ to deduce
$$
\lim_{N\ra +\infty}{e^{u^2 \gamma_N /2}\expect(e^{iuZ_N})}
=A(iu).
$$
\par
Furthermore, since $A_2(u,N)$ is defined as a product, it is enough to
show that each hypergeometric factor $\hyperg{iu}{iu}{1}{p^{-1}}$, $p$
prime, is the characteristic function of a random variable $X_p$ to
obtain the desired result with $Z_N$ the sum of independent variables
distributed as $X_p$ for $p\leq N$.  Lemma~\ref{lm-hyperg} below,
applied with $x=p$, checks that this is the case. Then, finally, the
convergence is locally uniform because so is the convergence of the
Euler product defining $A(iu)$.
\end{proof}

\begin{lem}\label{lm-hyperg}
Let $X$ be a complex-valued random variable uniformly distributed over
the unit circle, and let $x$ be a real number with $x>1$. Then
we have
$$
\expect(e^{iu(\log |1-x^{-1/2}X|^{-2})})=
\expect\Bigl(|1-x^{-1/2}X|^{-2iu}\Bigr)=\hyperg{iu}{iu}{1}{x^{-1}}.
$$
\end{lem}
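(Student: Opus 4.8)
The first equality is immediate and requires no work: since $\log|1-x^{-1/2}X|^{-2} = -2\log|1-x^{-1/2}X|$ and $|1-x^{-1/2}X|$ is a positive real number, the unambiguous real power gives $e^{\ii u \log|1-x^{-1/2}X|^{-2}} = |1-x^{-1/2}X|^{-2\ii u}$. The whole content is therefore the second equality, and my plan is to expand the power as a double binomial series in $X$, integrate term by term over the unit circle, and use the orthogonality relations $\E(X^m)=\delta_{m,0}$ (for $m\in\ZZ$) to collapse the double sum onto its diagonal, which will be exactly the hypergeometric series.

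Write $r=x^{-1/2}\in(0,1)$. The first step is to factor the modulus. Since $\Reel(1-rX)\geq 1-r>0$, the point $1-rX$ lies in the open right half-plane for every value of $X$, so the principal branch of the power is well defined there. Writing $1-rX=\rho e^{\ii\alpha}$ with $\rho=|1-rX|$ and $\alpha\in(-\pi/2,\pi/2)$, and noting that $1-r\bar X=\overline{1-rX}=\rho e^{-\ii\alpha}$, one checks directly that $(1-rX)^{-\ii u}(1-r\bar X)^{-\ii u}=e^{-2\ii u\log\rho}=|1-rX|^{-2\ii u}$, the conjugate phases $\pm u\alpha$ cancelling. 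I would record this factorization as the key preliminary identity.

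Next I would apply the binomial series $(1-w)^{-\ii u}=\sum_{k\geq0}\frac{(\ii u)_k}{k!}w^k$, valid for $|w|<1$ (radius of convergence $1$), to each factor, with $w=rX$ and $w=r\bar X=rX^{-1}$ respectively, where $(\ii u)_k=\ii u(\ii u+1)\cdots(\ii u+k-1)=\Gamma(\ii u+k)/\Gamma(\ii u)$. Multiplying the two series yields $|1-rX|^{-2\ii u}=\sum_{k,l\geq0}\frac{(\ii u)_k(\ii u)_l}{k!\,l!}r^{k+l}X^{k-l}$. Because $r<1$, the double series $\sum_{k,l}\frac{|(\ii u)_k|\,|(\ii u)_l|}{k!\,l!}r^{k+l}$ converges absolutely, so Fubini's theorem legitimizes interchanging $\E$ with summation; applying $\E(X^{k-l})=\delta_{k,l}$ then kills every off-diagonal term and leaves $\E(|1-rX|^{-2\ii u})=\sum_{k\geq0}\frac{(\ii u)_k^2}{(k!)^2}r^{2k}$.

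Finally I would identify this diagonal series with the hypergeometric value. Since $r^2=x^{-1}$ and $(1)_k=k!$, the series equals $\sum_{k\geq0}\frac{(\ii u)_k(\ii u)_k}{(1)_k}\frac{(x^{-1})^k}{k!}=\hyperg{\ii u}{\ii u}{1}{x^{-1}}$, which is the claim. The only delicate points, and thus the \emph{main obstacle} (though a mild one), are the consistent choice of branch in the factorization step and the absolute convergence needed to justify term-by-term integration; both are controlled entirely by the bound $r<1$ together with the positivity $\Reel(1-rX)\geq 1-r>0$, so the remaining computation is routine.
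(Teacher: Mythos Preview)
Your proof is correct. The paper, however, takes a different and shorter route: it writes $|1-x^{-1/2}X|^2=1+x^{-1}-2x^{-1/2}\cos\theta$ for $X=e^{\ii\theta}$, so that
\[
\expect\bigl(|1-x^{-1/2}X|^{-2\ii u}\bigr)=\frac{1}{2\pi}\int_0^{2\pi}\bigl(1+x^{-1}-2x^{-1/2}\cos\theta\bigr)^{-\ii u}\,d\theta,
\]
and then simply cites an integral formula from Gradshteyn--Ryzhik (entry 9.112 with $n=0$, $p=\ii u$, $z=x^{-1/2}$) identifying this as $\hyperg{\ii u}{\ii u}{1}{x^{-1}}$. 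Your argument instead factors the modulus as $(1-rX)^{-\ii u}(1-r\bar X)^{-\ii u}$, expands each factor as a binomial series, and uses the orthogonality $\expect(X^m)=\delta_{m,0}$ to collapse the resulting double sum onto its diagonal, which is the hypergeometric series. Your approach is more self-contained (no table of integrals required) and makes transparent why the hypergeometric series appears; the paper's approach is faster but outsources the computation to a reference. In effect, your calculation is a direct proof of the special case of the cited formula that is needed here.
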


\begin{proof}
With $X$ as described, we have
$$
\Bigl|1-\frac{X}{\sqrt{x}}\Bigr|^2=
1+\frac{1}{x}-\frac{2\Reel(X)}{\sqrt{x}},
$$
which is always $\geq (1-x^{-1/2})^2>0$. Since $\Reel(X)$ is
distributed like $\cos\Theta$, where $\Theta$ is uniformly distributed
on $[0,2\pi]$, we have
$$
\expect(e^{iu\log |1-x^{-1/2}X|^{-2}})=
\frac{1}{2\pi}
\int_{0}^{2\pi}{
(1+x^{-1}-2x^{-1/2}\cos\theta)^{-iu}d\theta
}.
$$
\par
Now it is enough to apply~\cite[9.112]{gr} (with $n=0$, $p=iu$,
$z=x^{-1/2}$) to see that this expression is exactly
$\hyperg{iu}{iu}{1}{x^{-1}}$. 
\end{proof}

\begin{rem}
In view of~(\ref{momentszeta}) and the Euler product (formal)
expansion
$$
|\zeta(\demi+it)|^2~~ \text{``}=\text{''}~~
\prod_{p}{\Bigl|1-\frac{1}{p^{1/2 +it}}\Bigr|^{-2}},
$$
the mod-Gaussian convergence of the arithmetic factor is described
concretely as follows: let $(X_p)$ be a sequence of independent random
variables identically and uniformly distributed on the unit
circle. Then the sequence of random variables defined by
$$
\sum_{p\leq N}{
\log \Bigl|1-\frac{X_p}{\sqrt{p}}\Bigr|^{-2}
}=
\log \prod_{p\leq N}{\Bigl|1-\frac{X_p}{\sqrt{p}}\Bigr|^{-2}}
$$
converges as $N\ra +\infty$, in the mod-Gaussian sense, with limiting
function given by the arithmetic factor for the moments of
$|\zeta(\demi +it)|^2$ in~(\ref{momentszeta}), evaluated at $iu$, and
parameters $(0,2\log( e^{\gamma}\log N))$.
\end{rem}

\begin{rem}
  The computations above are implicit in some earlier work, for
  instance in the paper~\cite{cfkrs} of Conrey, Farmer, Keating,
  Rubinstein and Snaith on refined conjectures for moments of
  $L$-functions.
\end{rem}

\subsection{Some families of $L$-functions over function fields}
\label{ssec-function-fields}

In this section, we give an example of mod-Gaussian convergence
in the setting of families of $L$-functions, as developed by Katz and
Sarnak~\cite{katzsarnak}.  We do not try to summarize the most general
context in which they operate, in order to keep prerequisites from
algebraic geometry to a minimum, concentrating on one concrete example
which is already of great interest and can be explained ``from
scratch''. It is the family of hyperelliptic curves, which is
described in~\cite[\S 10.1.18, 10.8]{katzsarnak}.
\par
The fundamental result linking families of $L$-functions and Random
Matrix Theory is the equidistribution theorem of Deligne, which we
phrase (in the special case under consideration) in more probabilistic
language than usual to clarify its meaning for probabilists. Its
content is then that some sequences of random variables, defined
arithmetically and taking values in the \emph{set of conjugacy
  classes} in compact Lie groups such as $U(N)$, converge in law to
the image on the space of conjugacy classes of the probability Haar
measure on the group. Consequently, the values of the characteristic
polynomials of such random variables are approximately distributed
like the variables $Z_N$ in the Keating-Snaith limit
formula~(\ref{momentinfini}) for suitable values of $N$ (or their
analogues for other groups).
\par
Let $p$ be an odd prime number and let $q=p^n$, $n\geq 1$, be a power
of $p$. We denote by $\Fp_q$ a field with $q$ elements, in particular
$\Fp_p=\Zz/p\Zz$. Recall from the theory of finite fields that if we
fix an algebraic closure $\bar{\Fp}_q$ of $\Fp_q$, then for every
$n\geq 1$ there exists a unique subfield $\Fp_{q^n}$ of $\bar{\Fp}_q$
which has order $q^n$ (i.e., it is a field extension of degree $n$ of
$\Fp_q$), which is characterized as the set of $x\in\bar{\Fp}_q$ such
that $x^{q^n}=x$.
\par
Let $g\geq 1$ be an integer, and let $f\in \Fp_q[T]$ be a monic
polynomial of degree $2g+1$ with no repeated roots (in an algebraic
closure $\bar{\Fp}_q$ of $\Fp_q$). Then the set $C_f$ of solutions, in
$\bar{\Fp}_q^2$, of the polynomial equation
$$
C_f\,:\, y^2=f(x)=x^{2g+1}+a_{2g}x^{2g}+\cdots +a_1x+a_0,
\quad\quad\text{(say)}.
$$
is called an \emph{affine hyperelliptic curve of genus $g$}. Taking
the associated homogeneous equation in projective coordinates
$[x:y:z]$, one gets the projective curve
$$
\tilde{C}_f\,:\,y^2z^{2g-1}=f(xz^{-1})z^{2g+1}=x^{2g+1}+a_{2g}zx^{2g}+\cdots
+a_1z^{2g}x+a_0z^{2g+1},
$$
which is still smooth and corresponds to $C_f$ with an added point at
infinity with projective coordinates $[0:1:0]$. 
\par
For every $n\geq 1$, denote by $\tilde{C}_f(\Fp_{q^n})$ the set of
points in $\tilde{C}_f$ which have coordinates in the subfield
$\Fp_{q^n}$ of $\bar{\Fp}_q$ (note that $\tilde{C}_f(\Fp_{q^n})\simeq
C_f(\Fp_{q^n})\cup \{[0:1:0\}$). The $L$-function $P_f(T)$ of
$\tilde{C}_f$ (sometimes called the $L$-function of $C_f$ instead) is
then defined as the numerator of the zeta function $Z(\tilde{C}_f)$
defined by the formal power series expansion
$$
Z(\tilde{C}_f)=\exp\Bigl(\sum_{n\geq
  1}{\frac{|\tilde{C}_f(\Fp_{q^n})|}{n}T^n}\Bigr)=
\exp\Bigl(\sum_{n\geq 1}{\frac{|C_f(\Fp_{q^n})|+1}{n}T^n}\Bigr),
$$
which is known to represent a rational function of the form
$$
Z(\tilde{C}_f)=\frac{P_f(T)}{(1-T)(1-qT)},
$$
which determines uniquely the $L$-function $P_f$.
\par
The following properties of $P_f$ were all proved by 1945, in
particular thanks to the work of A. Weil on the Riemann Hypothesis for
curves over finite fields:
\par
\noindent -- $P_f$ is a polynomial with integer coefficients of degree
$2g$, with $P_f(0)=1$.
\par
\noindent -- [Functional equation (F.K. Schmidt)] We have the
polynomial identity
$$
q^gT^{2g}P_f\Bigl(\frac{1}{qT}\Bigr)=P_f(T).
$$
\par
\noindent -- [Riemann Hypothesis (A. Weil)] If we write
\begin{equation}\label{eq-factor}
P_f(T)=\prod_{1\leq j\leq 2g}{(1-\alpha_{f,j}T)},\quad\quad
\alpha_{f,j}\in\CC,
\end{equation}
then all the inverse roots $\alpha_{f,j}$ satisfy
$|\alpha_{f,j}|=\sqrt{q}$. 
\par
The following property, which Weil could already prove in some form,
is much better understood in the framework of algebraic geometry as
developed in the 1960's by the Grothendieck school:
\par
\noindent -- [Spectral interpretation] There exists a well-defined
conjugacy class $F_f$ in the set $U(2g,\Cc)^{\sharp}$ of conjugacy
classes in the compact unitary group $U(2g,\Cc)$ such that
\begin{equation}\label{eq-spectral}
P_f(T)=\det(1-q^{1/2}TF_f)
\end{equation}
(this conjugacy class is the unitarized geometric Frobenius conjugacy
class of $\tilde{C}_f$). Moreover, there exists a non-degenerate
alternating form $\langle\cdot,\cdot\rangle$ on $\Cc^{2g}$ such that
$F_f$ is a conjugacy class in $USp(2g,\langle\cdot,\cdot\rangle)$, the
unitary symplectic group of matrices which leave the alternating form
invariant. Because any two non-degenerate alternating forms are
conjugate over $\CC$, we can (and will) see $F_f$ as a well-defined
conjugacy class in $USp(2g,\CC)$, the unitary symplectic group for the
standard symplectic form.

\begin{rem}
  If one makes the substitution $T=q^{-s}$, $s\in \CC$, to define a
  complex-variable $L$-function 
\begin{equation}\label{eq-lf}
L(f,s)=P_f(q^{-s}),
\end{equation}
the functional equation and Riemann Hypothesis become exact analogues
of the corresponding property and conjecture for the Riemann zeta
function and its zeros, namely
$$
L(f,s)=q^{2g(1/2-s)}L(f,1-s),
$$
and all zeros of $L(f,s)$ have real part equal to $\demi$.
\par
The spectral interpretation, on the other hand, which implies that
zeros of the $L$-function are eigenvalues of a unitary matrix (defined
only up to conjugation), is much more mysterious for the Riemann zeta
function. Note that the fact that $F_f$ is in fact a symplectic
conjugacy class implies the functional equation, by simple linear
algebra.
\end{rem}

Here is now one particular case of the Deligne Equidistribution
Theorem. 

\begin{thm}[Katz-Sarnak]\label{th-deligne}
  Fix an integer $g\geq 1$. For every power $q$ of an odd prime $p$,
  let $\mathcal{H}_{g,q}$ be the set of monic polynomials in
  $\Fp_q[T]$ of degree $2g+1$ which have no multiple roots.  Let
  $H_{g,q}$ be random variables with values in
  $USp(2g,\CC)^{\sharp}$ and with distributions given by
\begin{equation}\label{eq-deligne-discr}
\Prob(H_{g,q}=C)=\frac{1}{|\mathcal{H}_{g,q}|}
|\{f\in \mathcal{H}_{g,q}\,\mid\, F_f=C
\}|
\end{equation}
for any conjugacy class $C\in USp(2g,\CC)^{\sharp}$. 
\par
Then, as $q\ra +\infty$, among odd powers of primes, the random
variables $H_{g,q}$ converge in law to a random variable $H_g$
distributed according to
\begin{equation}\label{eq-deligne-cont}
\Prob(H_g\in A)=\mu_g(A),
\end{equation}
for any conjugacy-invariant measurable set $A$ in $USp(2g,\CC)$, where
$\mu_g$ is the probability Haar measure on $USp(2g,\CC)$. In other
words, $H_g$ is distributed according to the image on the space of
conjugacy classes of the probability Haar measure on $USp(2g,\CC)$.
\end{thm}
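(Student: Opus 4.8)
The plan is to prove convergence in law through the Weyl equidistribution criterion, reducing everything first to the vanishing of averaged irreducible characters, and then to a cohomological computation governed by the monodromy of the family. First I would note that $USp(2g,\CC)^{\sharp}$ is a compact metric space and that, by the Peter--Weyl theorem, the continuous class functions on $USp(2g,\CC)$ are uniformly approximated by finite linear combinations of the irreducible characters $\chi_\rho$. Since convergence in law tested against a uniformly dense family of test functions implies convergence in law, it suffices to establish, for every irreducible representation $\rho$, that
\begin{equation*}
\E[\chi_\rho(H_{g,q})]\ra \int_{USp(2g,\CC)}\chi_\rho\,d\mu_g
\end{equation*}
as $q\ra+\infty$ through odd prime powers. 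By Schur orthogonality the right-hand side equals the multiplicity of the trivial representation in $\rho$, hence $1$ if $\rho$ is trivial and $0$ otherwise; the trivial case of the left-hand side is also clear. So the whole statement reduces to showing that, for every \emph{nontrivial} irreducible $\rho$, the average
\begin{equation*}
\E[\chi_\rho(H_{g,q})]=\frac{1}{|\mathcal{H}_{g,q}|}\sum_{f\in\mathcal{H}_{g,q}}\operatorname{tr}\rho(F_f)
\end{equation*}
tends to $0$.

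Next I would interpret this average geometrically. The set $\mathcal{H}_{g,q}$ is the set of $\Fp_q$-points of the open subvariety $\mathcal{U}_g\subset\mathbb{A}^{2g+1}$ cut out as the complement of the discriminant locus, of dimension $d=2g+1$; note $|\mathcal{H}_{g,q}|=q^{2g+1}-q^{2g}\sim q^d$. The family of curves $C_f$ over $\mathcal{U}_g$ gives a lisse $\ell$-adic sheaf (essentially $R^1\pi_*$) of rank $2g$ on which the geometric monodromy acts symplectically, and the unitarized Frobenius $F_f$ records this action on the fiber; composing the monodromy representation with $\rho$ and Tate-twisting to weight $0$ produces a sheaf $\mathcal{F}_\rho$ with $\operatorname{tr}\rho(F_f)=\operatorname{tr}(\mathrm{Frob}_f\mid(\mathcal{F}_\rho)_f)$. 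By the Grothendieck--Lefschetz trace formula, $\sum_f\operatorname{tr}\rho(F_f)$ equals the alternating sum of traces of Frobenius on the groups $H^i_c(\mathcal{U}_g\otimes\bar{\Fp}_q,\mathcal{F}_\rho)$. Deligne's Riemann Hypothesis (purity) bounds the contribution of the groups with $i<2d$ by $O(q^{d-1/2})$, with implied constant uniform in $q$ by the standard bounds on the total Betti number (via the Euler--Poincar\'e formula), while the top group $H^{2d}_c$ is isomorphic, up to the Tate twist multiplying Frobenius by $q^d$, to the space of coinvariants of $\rho$ under $\pi_1^{\mathrm{geom}}(\mathcal{U}_g)$. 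Dividing by $|\mathcal{H}_{g,q}|\sim q^d$, the limit of the average is exactly the dimension of these geometric coinvariants, up to an error of size $O(q^{-1/2})$.

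The hard part, and the genuine arithmetic-geometric content, is the determination of the monodromy: one must show that the geometric monodromy group of the hyperelliptic family, i.e.\ the Zariski closure of the image of $\pi_1^{\mathrm{geom}}(\mathcal{U}_g)$ in $Sp(2g)$, is the \emph{full} symplectic group $Sp(2g,\CC)$. This is precisely the computation carried out by Katz and Sarnak for this family~\cite[\S 10.1]{katzsarnak}, typically via the theory of Lefschetz pencils and the irreducibility of the vanishing-cycle action, and it is the step whose failure would invalidate the whole argument. Granting it, a nontrivial irreducible representation $\rho$ of $Sp(2g)$ contains no copy of the trivial representation, so its coinvariants under the Zariski-dense geometric fundamental group vanish; hence $H^{2d}_c(\mathcal{U}_g\otimes\bar{\Fp}_q,\mathcal{F}_\rho)=0$ and the average is $O(q^{-1/2})\ra 0$, as required. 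Together with the trivial-representation case this verifies the Weyl criterion, and the equidistribution stated in the theorem follows from Deligne's theorem combined with the symplectic monodromy computation.
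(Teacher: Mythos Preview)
Your sketch is correct and is in fact the standard route to Deligne equidistribution: reduce via Peter--Weyl to character sums, interpret these cohomologically through the Grothendieck--Lefschetz trace formula, control the lower-degree cohomology by Deligne's weight bounds, and identify the top cohomology with the geometric coinvariants, which vanish for nontrivial $\rho$ once one knows the geometric monodromy is the full symplectic group. The one point to be slightly careful about in your write-up is that the limit of the average is not literally the \emph{dimension} of the coinvariants but the trace of Frobenius on them (after Tate twist); this is harmless here because the coinvariants are zero for nontrivial $\rho$ and one-dimensional with Frobenius acting by $q^d$ for trivial $\rho$, but the distinction matters in situations where the arithmetic and geometric monodromy groups differ.

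The paper, by contrast, does not reprove any of this: its ``proof'' of this theorem is a direct citation to Theorem~10.8.2 of Katz--Sarnak~\cite{katzsarnak}, together with a dictionary matching their notation (their $\vartheta(k,\alpha_k,C_f/k)$ with $\alpha_k=\sqrt{|k|}$ is the class $F_f$, and their measure $\mu(hyp,2g+1,g,\Fp_q,\sqrt{q})$ is the law of $H_{g,q}$). So you have effectively unpacked the content of the reference the paper invokes; your argument and the paper's are the same proof at different levels of resolution, with the big-monodromy input~\cite[Th.~10.1.18.3]{katzsarnak} being the substantive black box in both.
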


\begin{proof}
  This is exactly Theorem 10.8.2 of Katz and
  Sarnak~\cite{katzsarnak},\footnote{\ Except that there is a typo in
    their statement, namely in the right-hand side of line 13,
    $\mu(intrin,g,k_i,\alpha_{k_i})$ should be replaced by
    $\mu(hyp,d,g,k_i,\alpha_{k_i})$ and line 14 can be deleted.}
  taking the choice of $\alpha_k$ to be $\sqrt{|k|}$ so that the
  conjugacy class denoted $\vartheta(k,\alpha_k,C_f/k)$ is the same as
  the class $F_f$ for $f\in \mathcal{H}_{g,|k|}$ (see the discussion
  in 10.7.2 of~\cite{katzsarnak}), and the
  distribution~(\ref{eq-deligne-discr}) of $H_{g,q}$ is the same as
  the measure $\mu(hyp,2g+1,g,\Fp_q,\sqrt{q})$ as defined in 10.8.1
  of~\cite{katzsarnak}.
\end{proof}

\begin{rem}
  This result is derived by an application of Deligne's
  Equidistribution Theorem.  Deligne's theorem is much more general:
  in fact, for \emph{any} ``algebraic'' family of $L$-functions (a
  much more general notion that what we have described) there is
  always an equidistribution theorem which can be interpreted as
  convergence in law of random variables defined similarly
  to~(\ref{eq-deligne-discr}) for some conjugacy classes associated
  with the family, to a random variable distributed according to the
  image of the probability Haar measure on a group which can be
  interpreted as ``the smallest group for which equidistribution may
  conceivably hold'' (see~\cite[\S 9.2, 9.3, 9.7]{katzsarnak} for
  detailed discussions of various versions). Much of the work in
  applying Deligne's Equidistribution Theorem is concentrated in the
  determination of this group (often called the geometric monodromy
  group of the family). For the case of the family given by the
  $\mathcal{H}_{g,q}$ in Theorem~\ref{th-deligne}, the content of the
  result is that this monodromy group is the whole symplectic group
  $Sp(2g)$, and this is proved in~\cite[Th. 10.1.18.3]{katzsarnak}.
\end{rem}

The following proposition is then immediate.

\begin{prop}\label{pr-mellin}
  Let $g\geq 1$ be an integer. Let $H_{g,q}$ be random variables as in
  Theorem~\ref{th-deligne}. For any $\lambda \in\CC$ with
  $\Reel(\lambda)>0$, we have
$$
\lim_{g\ra +\infty}{\lim_{q\ra
    +\infty}{\frac{1}{g^{(\lambda^2+\lambda)/2}}
    \E(\det(1-H_{g,q})^{\lambda})}}=M_{Sp}(\lambda),
$$
where
$$
M_{Sp}(\lambda)=
2^{-\lambda^2/2}\Bigl(\frac{\pi}{2}\Bigr)^{\lambda/2} 
\frac{G(3/2)}{G(3/2+\lambda)}
$$
\par
In particular, for any integer $k\geq 1$, we have
$$
\lim_{g\ra +\infty}{\lim_{q\ra
    +\infty}{\frac{1}{g^{(k^2+k)/2}}
    \E(\det(1-H_{g,q})^{k})}}=
\prod_{j=1}^k{\frac{1}{(2j-1)!!}
}.
$$
\end{prop}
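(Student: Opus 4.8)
The plan is to split the double limit into its two stages: first pass the inner limit $q\to+\infty$ through the expectation using the weak convergence furnished by Theorem~\ref{th-deligne}, reducing it to a Haar integral over $USp(2g,\CC)$, and then evaluate the outer limit $g\to+\infty$ using the symplectic analogue of the Keating--Snaith asymptotic~(\ref{momentinfini}).

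For the inner limit, I would first record that for $U\in USp(2g,\CC)$ the eigenvalues occur in conjugate pairs $e^{\pm\ii\theta_j}$, so that $\det(1-U)=\prod_{j=1}^{g}|1-e^{\ii\theta_j}|^2$ is real and lies in $[0,4^g]$. Hence, for $\Reel(\lambda)>0$, the map $U\mapsto\det(1-U)^{\lambda}=\exp(\lambda\log\det(1-U))$ (real logarithm) is continuous where $\det(1-U)>0$ and extends continuously by $0$ on the locus where $1$ is an eigenvalue, since there $|\det(1-U)^{\lambda}|=\det(1-U)^{\Reel(\lambda)}\to0$. This function is conjugacy-invariant and bounded, hence descends to a bounded continuous function on $USp(2g,\CC)^{\sharp}$. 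As Theorem~\ref{th-deligne} gives $H_{g,q}\to H_g$ in law as $q\ra+\infty$, the definition of weak convergence (applied to the real and imaginary parts) yields
$$
\lim_{q\ra+\infty}\E\bigl(\det(1-H_{g,q})^{\lambda}\bigr)
=\E\bigl(\det(1-H_g)^{\lambda}\bigr)
=\int_{USp(2g,\CC)}\det(1-U)^{\lambda}\,d\mu_g(U),
$$
the integral being taken against the Haar probability measure $\mu_g$.

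For the outer limit, I would invoke the symplectic counterpart of~(\ref{momentinfini}): evaluating the Haar average by the Weyl integration formula over $USp(2g)$ together with a Selberg-type integral produces a closed form as a ratio of Barnes $G$-values, whose asymptotic expansion as $g\ra+\infty$ is $g^{(\lambda^2+\lambda)/2}M_{Sp}(\lambda)(1+o(1))$ with $M_{Sp}(\lambda)$ as stated. This is precisely the Keating--Snaith moment formula for the symplectic ensemble, so I would cite it directly rather than re-derive the Barnes asymptotics. Combining the two stages gives the main assertion.

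Finally, for integer $\lambda=k$ I would simplify $M_{Sp}(k)$ algebraically: the functional equation $G(z+1)=\Gamma(z)G(z)$ gives $G(3/2)/G(3/2+k)=\bigl(\prod_{j=1}^{k}\Gamma(j+\demi)\bigr)^{-1}$, and the elementary identity $\Gamma(j+\demi)=(2j-1)!!\,\sqrt{\pi}\,2^{-j}$ shows that the powers of $2$ and of $\pi$ cancel exactly, leaving $M_{Sp}(k)=\prod_{j=1}^{k}\frac{1}{(2j-1)!!}$. The genuine obstacle is the outer $g\ra+\infty$ asymptotic, which is the substantive random-matrix input; the inner limit is a routine weak-convergence argument once continuity and boundedness of the test function are verified, the only delicate point being the behavior of $\det(1-U)^{\lambda}$ where $1$ is an eigenvalue, which is controlled by the hypothesis $\Reel(\lambda)>0$.
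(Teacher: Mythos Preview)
Your proposal is correct and follows essentially the same route as the paper: both arguments observe that $\det(1-U)\geq 0$ for $U\in USp(2g,\CC)$ so that $U\mapsto\det(1-U)^{\lambda}$ is bounded and continuous for $\Reel(\lambda)>0$, pass the inner limit via the weak convergence of Theorem~\ref{th-deligne}, cite the symplectic Keating--Snaith asymptotic for the outer limit, and simplify $M_{Sp}(k)$ using $G(z+1)=\Gamma(z)G(z)$ and $\Gamma(j+\demi)=(2j-1)!!\,\sqrt{\pi}\,2^{-j}$. The only extra step the paper carries out explicitly is verifying, via the duplication formula for $G$, that the expression $2^{-\lambda^2/2}(\pi/2)^{\lambda/2}G(3/2)/G(3/2+\lambda)$ agrees with the form in which Keating and Snaith state their limit; you implicitly assume this identification.
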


\begin{proof}
  For $\Reel(\lambda)>0$, the function
$$
x\mapsto \det(1-x)^{\lambda}
$$
is continuous and bounded on $USp(2g,\Cc)^{\sharp}$; this is because,
in terms of the eigenvalues $e^{i\theta_j}$ of $x\in USp(2g,\CC)$
arranged in pairs so that $\theta_{2g+1-j}=-\theta_j$, we
have\footnote{\ This is an analogue of the non-negativity of the
  central special value $L(f,1/2)$ for any self-dual $L$-function,
  which is implied by the Riemann Hypothesis.}
$$
\det(1-x)=\prod_{1\leq j\leq g}{|(1-e^{i\theta_j})|^2}\geq 0.
$$
\par
Keating and Snaith~\cite[eq. (26) \& (32)]{keating-snaith2} have shown
that
\begin{equation}\label{eq-ks-sp}
  \lim_{g\ra
    +\infty}{g^{-(\lambda^2+\lambda)/2}\E(\det(1-H_g)^{\lambda})}=
  2^{\lambda^2/2}\frac{G(1+\lambda)\sqrt{\Gamma(1+\lambda)}}
{\sqrt{G(1+2\lambda)\Gamma(1+2\lambda)}}
\end{equation}
for any complex number $\lambda$ with $\Reel(\lambda)>-1$, and
therefore the statement is a direct consequence of convergence in law
of $H_{g,q}$ to $H_g$ once we prove that our expression for
$M_{Sp}(\lambda)$ coincides with this value. This, however, is easy to
check: using the duplication formula for $G(z)$ in the form
$$
G(\demi)^2G(2z)=
(2\pi)^{-z}2^{2z^2-2z+1}\Gamma(z)(G(z)G(z+\demi))^2
$$
one can rewrite the function in the square-root of the denominator of
the Keating-Snaith expression as
\begin{multline*}
\Gamma(1+2\lambda)G(1+2\lambda)=G(2(1+\lambda))
= 
(2\pi)^{-(\lambda+1)}2^{2(\lambda+1)^2-2(\lambda+1)+1}\\
\Gamma(1+\lambda)
\Bigl(\frac{G(1+\lambda)G(3/2+\lambda)}{G(\demi)}\Bigr)^2,
\end{multline*}
and deduce the result after a short computation.
\par
The last expression for $M_{Sp}(k)$ is equation (34)
in~\cite{keating-snaith2} and follows also from the definition of
$M_{Sp}(\lambda)$ using $G(z+1)=\Gamma(z)G(z)$ and the formula 
$$
\Gamma(k+\demi)=\frac{(2k-1)!!}{2^k}\sqrt{\pi},
$$
for $k\geq 0$ integer.
\end{proof}

We can not argue quite so quickly to derive a mod-Gaussian convergence
result because the random variables $\log \det(1-H_{g,q})$ are not
defined whenever $H_{g,q}$ has an eigenvalue $1$, and this can occur
with positive probability. Indeed, by~(\ref{eq-spectral})
and~(\ref{eq-factor}), we have
$$
\det(1-F_f)=P_f(q^{-1/2})=\prod_{1\leq j\leq
  2g}{(1-q^{-1/2}\alpha_{f,j})}
$$
for any $f\in\mathcal{H}_{g,q}$, and so the issue is whether
$\sqrt{q}$ is a zero of $Z(C_f)$ (equivalently, whether $L(f,1/2)=0$),
and this may well happen (e.g., if $q=p^2$ with $p\equiv 3\mods{4}$,
for the curve $E\,:\,y^2=x^3-x$, it is well-known, and easy to show,
that $P_E(T)=(1-pT)^2$).
\par
Nevertheless, it is not too hard to prove the following:

\begin{prop}\label{pr-mod-gaussian-finite-fields}
  Let $g\geq 1$ be an integer. For any power $q\not=1$ of an odd prime
  $p$, let $\tilde{\mathcal{H}}_{g,q}$ be the subset of those $f\in
  \mathcal{H}_{g,q}$ such that $L(f,1/2)=P_f(q^{-1/2})\not=0$.
\par
\emph{(1)} Let $I_{g,q}$ be random variables with values in
$USp(2g,\CC)^{\sharp}$ such that
$$
\proba(I_{g,q}=C)=\frac{1}{|\tilde{\mathcal{H}}_{g,q}|}
|\{
f\in \tilde{\mathcal{H}}_{g,q}\,\mid\, F_f=C
\}|
$$
for any $C\in USp(2g,\CC)^{\sharp}$. Then $I_{g,q}$ converges in law to
$H_g$ as $q\ra+\infty$.
\par
\emph{(2)} Let $L_{g,q}=\log \det(I-I_{g,q})$ which is a well-defined
real-valued random variable. We have the mod-Gaussian convergence
$$
\lim_{g\ra +\infty}{\lim_{q\ra +\infty}{
g^{-iu/2+u^2/2}    \E(e^{iuL_{g,q}})}}=M_{Sp}(iu),
$$
for any $u\in\RR$.
\end{prop}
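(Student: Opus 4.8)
The plan is to reduce everything to Theorem~\ref{th-deligne} and the Keating--Snaith asymptotics~(\ref{eq-ks-sp}), the only genuinely new ingredient being a careful treatment of the singular set where $\det(1-H_{g,q})=0$.

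First I would establish part (1). Write $B=\{C\in USp(2g,\CC)^{\sharp}\,:\,\det(1-C)=0\}$ for the set of conjugacy classes having $1$ as an eigenvalue; it is the zero set of the continuous conjugation-invariant function $C\mapsto\det(1-C)$, hence closed. This function is not identically zero on $USp(2g,\CC)$ (e.g.\ it takes the value $2^{2g}$ at $-I$), so its zero set is a proper real-analytic subvariety and $\mu_g(B)=0$ (this also follows from the Weyl integration formula, the eigenangles having a continuous joint density). By construction $\proba(H_{g,q}\in B)=|\mathcal{H}_{g,q}\setminus\tilde{\mathcal{H}}_{g,q}|/|\mathcal{H}_{g,q}|$, and since $B$ is closed the portmanteau inequality applied to the convergence $H_{g,q}\to H_g$ of Theorem~\ref{th-deligne} gives $\limsup_q\proba(H_{g,q}\in B)\leq\mu_g(B)=0$. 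Thus $|\tilde{\mathcal{H}}_{g,q}|/|\mathcal{H}_{g,q}|\to1$ as $q\to+\infty$. For a bounded continuous $\phi$ on $USp(2g,\CC)^{\sharp}$ I would then compare the empirical averages $\E(\phi(I_{g,q}))$ and $\E(\phi(H_{g,q}))$: they differ by the contribution of $\mathcal{H}_{g,q}\setminus\tilde{\mathcal{H}}_{g,q}$, bounded by $\|\phi\|_\infty\,|\mathcal{H}_{g,q}\setminus\tilde{\mathcal{H}}_{g,q}|/|\mathcal{H}_{g,q}|\to0$, together with the renormalizing factor $|\mathcal{H}_{g,q}|/|\tilde{\mathcal{H}}_{g,q}|\to1$; hence $\E(\phi(I_{g,q}))\to\E(\phi(H_g))$, which is part (1).

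For part (2) I first observe that $L_{g,q}$ is real (since $\det(1-x)=\prod_j|1-e^{i\theta_j}|^2>0$ off $B$) and that $\E(e^{iuL_{g,q}})=\E(\det(1-I_{g,q})^{iu})$, so the inner limit asks for convergence of these expectations to $\E(\det(1-H_g)^{iu})$. The main obstacle is that the test function $f(x)=\det(1-x)^{iu}$ is bounded (of modulus $1$ off $B$) but \emph{not} continuous: as $x\to B$ one has $\log\det(1-x)\to-\infty$, so $\det(1-x)^{iu}$ oscillates and $f$ is genuinely discontinuous along $B$. Plain convergence in law therefore does not apply directly. I would resolve this via the extended continuous mapping theorem: setting $f=0$ on $B$ makes $f$ bounded and measurable with discontinuity set contained in $B$, and since $\mu_g(B)=0$ this set is $H_g$-null, so the convergence $I_{g,q}\to H_g$ of part (1) yields $\E(f(I_{g,q}))\to\E(f(H_g))$. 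As $I_{g,q}$ never meets $B$ and $H_g\in B$ only with probability zero, this is exactly $\lim_q\E(\det(1-I_{g,q})^{iu})=\E(\det(1-H_g)^{iu})$.

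Finally I would take the outer limit $g\to+\infty$ using the Keating--Snaith asymptotics~(\ref{eq-ks-sp}) (equivalently Proposition~\ref{pr-mellin}), valid for $\Reel(\lambda)>-1$ and in particular at $\lambda=iu$, where $\det(1-H_g)^{iu}$ is bounded so the expectation is unconditionally well defined. With $\lambda=iu$ one has $(\lambda^2+\lambda)/2=(-u^2+iu)/2=-u^2/2+iu/2$, so the normalizing factor $g^{-(\lambda^2+\lambda)/2}$ is precisely $g^{-iu/2+u^2/2}$, and~(\ref{eq-ks-sp}) gives $\lim_g g^{-iu/2+u^2/2}\E(\det(1-H_g)^{iu})=M_{Sp}(iu)$. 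Combining this with the inner limit proves the iterated statement. I expect no difficulty in this outer step beyond recording the arithmetic of the exponent; the only delicate point of the whole argument is the discontinuity of $\det(1-x)^{iu}$ handled above, which is exactly what forces the passage to $\tilde{\mathcal{H}}_{g,q}$ and the measure-zero verification $\mu_g(B)=0$.
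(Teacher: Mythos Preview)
Your proposal is correct and follows essentially the same route as the paper: both arguments hinge on showing that the closed null set $B=A_g=\{\det(1-x)=0\}$ has Haar measure zero, deducing via portmanteau that $\proba(H_{g,q}\in B)\to0$ (hence $I_{g,q}\Rightarrow H_g$), and then invoking the extended continuous mapping theorem for the bounded function $x\mapsto\det(1-x)^{iu}$ with discontinuity set $B$ before applying the Keating--Snaith asymptotics~(\ref{eq-ks-sp}) at $\lambda=iu$. Your treatment is in fact slightly more explicit than the paper's about the renormalizing ratio $|\mathcal{H}_{g,q}|/|\tilde{\mathcal{H}}_{g,q}|\to1$ and the exponent bookkeeping, but there is no substantive difference in method.
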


Note that we have here mod-Gaussian convergence with parameters
given by $(\demi \log g,\log g)$.

\begin{proof}
(1) If $\varphi$ is a bounded continuous function on
$USp(2g,\CC)^{\sharp}$, we have
$$
|\E(\varphi(I_{g,q}))-\E(\varphi(H_{g,q}))|\leq
\|\varphi\|_{\infty} 
\proba(\det(1-H_{g,q})=0)=\|\varphi\|_{\infty}
\proba(H_{g,q}\in A_g),
$$
where $A_g=\{x\in USp(2g,\CC)^{\sharp}\,\mid\, \det(1-x)=0\}$. This
set $A_g$ is a closed set with empty interior, hence boundary equal to
$A_g$, which has Haar measure zero. By Theorem~\ref{th-deligne} and
the standard properties of convergence in law, we have
$$
\lim_{q\ra+\infty}{\proba(H_{g,q}\in A_g)}=\proba(H_g\in A_g)=0,
$$
and it follows then that
$$
\lim_{q\ra +\infty}\E(\varphi(I_{g,q}))=\E(\varphi(H_{g,q})),
$$
which justifies the convergence in law of $I_{g,q}$.
\par
(2) For $f\in \tilde{\mathcal{H}}_{g,q}$, we have $\det(1-F_f)>0$, and
therefore the definition of the law of $I_{g,q}$ shows that $L_{g,q}$
is well-defined. Because of the Keating-Snaith limit
formula~(\ref{eq-ks-sp}), valid for all complex numbers with
$\Reel(\lambda)>-1$, it is enough to show that, for all $u\in\RR$, we
have
$$
\lim_{q\ra +\infty}{\E(e^{iuL_{g,q}})}=
\E(\det(1-H_g)^{iu}).
$$
\par
The function $\varphi$ on $USp(2g,\CC)^{\sharp}$ defined by
$$
x\mapsto  
\begin{cases}
0,&\text{ if } \det(1-x)=0\\
\det(1-x)^{iu},&\text{ otherwise},
\end{cases}
$$
is bounded and its set of points of discontinuity is the set $A_g$ of
Haar-measure $0$. By a fairly standard result on convergence in law,
this and the convergence in law of $I_{g,q}$ to $H_g$ suffice to
ensure that
$$
\lim_{q\ra +\infty} \E(\varphi(I_{g,q}))=\E(\varphi(H_g))
$$
(see, e.g.,~\cite[Ch. 4, \S 5, n${{}^o}$12, Prop. 22]{bourbaki},
properly translated, or one can of course do the necessary $\eps$
management by hand). By definition, the left-hand side is
$\E(e^{iuL_{g,q}})$, while the right-hand side is
$\E(\det(1-H_g)^{iu})$ (since $A_g$ has measure zero, once more).
\end{proof}

\begin{rem}
  Although we used the example of the family $\mathcal{H}_{g,q}$ in
  this section, it is clear from the proofs that the argument goes
  through with no change for any family with symplectic monodromy, and
  that suitable analogues will hold for families with unitary or (with
  a bit more care because of the issue of forced vanishing at the
  critical point) with orthogonal symmetry.
\end{rem}

\begin{rem}
  In terms of $L$-functions as defined in~(\ref{eq-lf}), we can
  rephrase the last limit as follows:
$$
\lim_{g\ra +\infty}
\lim_{q\ra +\infty}
\frac{g^{u^2/2}}{|\tilde{\mathcal{H}}_{g,q}|}
\sum_{f\in \tilde{\mathcal{H}}_{g,q}}{
\Bigl(\frac{L(f,1/2)}{\sqrt{g}}\Bigr)^{iu}
}=M_{Sp}(iu).
$$
\par
It remains a big problem to obtain results of this type without the
inner limit over $q$, which already transforms the arithmetic to a
pure ``random matrix'' problem by the magic of Deligne's
equidistribution theorem (see the comments and conjectures
in~\cite[p. 12, 13]{katzsarnak}, in particular Example (2),
p. 13). However, Faifman and Rudnick~\cite{fr} and Kurlberg and
Rudnick~\cite{kr} have recently given examples of problems where it is
possible to understand the limit $g\ra +\infty$ for fixed $q$.
\par
In this situation, by analogy with Section~\ref{ssec-arith}, one may expect 
to have a mod-Gaussian limit theorem with an extra ``arithmetic'' factor. 
More precisely, one can make the following conjecture:

\begin{conjecture}
For fixed odd $q$, we have
$$
\lim_{g\ra +\infty}{
\frac{g^{u^2/2-iu/2}}{|\tilde{\mathcal{H}}_{g,q}|}
\sum_{f\in \tilde{\mathcal{H}}_{g,q}}{
|L(f,1/2)|^{iu}
}
}=M_{Sp}(iu)A_h(iu)
$$
locally uniformly for $u\in\Rr$ and also
$$
\lim_{g\ra +\infty}{
\frac{g^{-\lambda(\lambda+1)/2}}{|\tilde{\mathcal{H}}_{g,q}|}
\sum_{f\in \tilde{\mathcal{H}}_{g,q}}{
|L(f,1/2)|^{\lambda}
}}
=M_{Sp}(\lambda)A_h(\lambda)
$$
for all real $\lambda>0$, where
\begin{multline*}
A_h(\lambda)=
\prod_{\pi}{
\Bigl(
1-\frac{1}{|\pi|}
\Bigr)^{\lambda(\lambda+1)/2}}\times\\
\Bigl(1+
\frac{1}{2}\frac{1}{1+|\pi|^{-1}}
\Bigl\{
\Bigl(1-\frac{1}{\sqrt{|\pi|}}\Bigr)^{-\lambda}-1
+
\Bigl(1+\frac{1}{\sqrt{|\pi|}}\Bigr)^{-\lambda}-1
\Bigr\}
\Bigr)
,
\end{multline*}
the product extending over all irreducible monic polynomials $\pi\in
\Fp_q[T]$, and $|\pi|=q^{\deg(\pi)}$ denoting the norm of $\pi$.
\end{conjecture}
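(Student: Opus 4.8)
The plan is to read both displays as the $\lambda=iu$ and $\lambda>0$ instances of a single central‑value moment asymptotic for the family of quadratic $L$‑functions $L(f,s)$ over $\Fp_q(T)$, taken in the large‑genus limit with $q$ held fixed. Since every $f\in\tilde{\mathcal{H}}_{g,q}$ has $L(f,1/2)=\det(1-F_f)>0$ (as in the discussion preceding Proposition~\ref{pr-mod-gaussian-finite-fields}), the two displays coincide under $\lambda=iu$ — note $g^{-\lambda(\lambda+1)/2}=g^{u^2/2-iu/2}$ there — so it suffices to analyze
$$
\frac{1}{|\tilde{\mathcal{H}}_{g,q}|}\sum_{f\in\tilde{\mathcal{H}}_{g,q}}L(f,1/2)^{\lambda}
$$
and show it is asymptotic to $M_{Sp}(\lambda)\,A_h(\lambda)\,g^{\lambda(\lambda+1)/2}$. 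The guiding principle is that the universal symplectic random‑matrix factor $M_{Sp}(\lambda)$ of~(\ref{eq-ks-sp}) should survive the fixed‑$q$ limit unchanged, while the arithmetic factor $A_h(\lambda)$ records the local statistics of the quadratic symbols $\chi_f(\pi)=\bigl(\tfrac{f}{\pi}\bigr)$, which no longer wash out once $q$ is not sent to infinity.

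First I would set up the arithmetic input. Using~(\ref{eq-spectral}), the numerator $P_f$ is a function‑field quadratic Dirichlet $L$‑function, $L(f,s)=\prod_{\pi}(1-\chi_f(\pi)|\pi|^{-s})^{-1}$ over monic irreducibles $\pi$ coprime to $f$, with $\chi_f(\pi)=\pm1$ according to whether $f$ is a square mod $\pi$. The crucial averaging lemma is that, as $g\to\infty$ with $q$ fixed,
$$
\frac{1}{|\tilde{\mathcal{H}}_{g,q}|}\sum_{f}\chi_f(B)\longrightarrow
\begin{cases}
\prod_{\pi\mid B}\frac{|\pi|}{|\pi|+1}, & B \text{ a perfect square},\\
0, & \text{otherwise},
\end{cases}
$$
which expresses that $f\bmod\pi$ equidistributes over nonzero residues (so $\chi_f(\pi)=\pm1$ with probability $\tfrac12$ each, conditioned on $\pi\nmid f$, while $\pi\mid f$ has density $(|\pi|+1)^{-1}$). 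Averaging the local Euler factor $(1-\chi_f(\pi)|\pi|^{-1/2})^{-\lambda}$ against exactly these weights reproduces the inner bracket of $A_h(\lambda)$; this computation is the origin of the displayed formula, and it should be organized at the level of the local zeta factor so that the convergence‑ensuring prefactor $(1-|\pi|^{-1})^{\lambda(\lambda+1)/2}$ emerges naturally.

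Next I would assemble the global moment. For integer $\lambda=k$ one expands $L(f,1/2)^{k}$ via the approximate functional equation into a Dirichlet polynomial $\sum_{A}d_k(A)\chi_f(A)|A|^{-1/2}$ of length $\asymp g$, applies the averaging lemma term by term, and isolates the diagonal $A=\square$ as the main term. The resulting arithmetic sum factors as an Euler product which, after separating the polar part, splits into $A_h(k)$ times a combinatorial sum whose large‑$g$ behaviour is a polynomial of degree $k(k+1)/2$ in $g$; identifying its leading coefficient with $M_{Sp}(k)$ is a generating‑function computation of the ``CFKRS recipe'' type of~\cite{cfkrs}, consistent with the symplectic moment~(\ref{eq-ks-sp}). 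Passing from integer $k$ to general real $\lambda$ (and to $\lambda=iu$, with the required local uniformity in $u$) would be handled through the shifted‑moment and ratios‑conjecture formalism: write the $\lambda$‑th moment as a residue/contour extraction from an average of products of shifted $L$‑values, average the shifted Euler product using the same local densities, and specialize.

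The main obstacle is exactly the step that keeps this a conjecture, namely controlling the off‑diagonal contributions $A_1\cdots A_k\neq\square$ in the averaging lemma, uniformly enough to guarantee they are of lower order than $g^{\lambda(\lambda+1)/2}$. For the first few moments this is within reach by Florea‑type estimates and the techniques of Faifman--Rudnick~\cite{fr} and Kurlberg--Rudnick~\cite{kr} cited above, but for high integer — and a fortiori non‑integer or purely imaginary — exponents the error terms are not known to be controllable without essentially proving the full moment conjecture, and the passage to general $\lambda$ via the ratios conjecture is itself unproven. I would therefore expect the arithmetic‑averaging lemma with its full error term, rather than the (essentially formal) local computation of $A_h$ or the identification of the random‑matrix factor $M_{Sp}$, to be the genuinely hard part.
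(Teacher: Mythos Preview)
The statement is a \emph{conjecture}; the paper does not prove it and does not claim to. What follows the conjecture in the paper is a one-paragraph heuristic motivation for the shape of $A_h(\lambda)$: the observation (attributed to Kurlberg--Rudnick) that the $\pi$-factor of $L(f,\tfrac12)$ converges in law to $(1-T_\pi|\pi|^{-1/2})^{-1}$ with $T_\pi$ a trinomial random variable taking value $0$ with probability $(|\pi|+1)^{-1}$ and $\pm1$ each with probability $\tfrac12(1-(|\pi|+1)^{-1})$, so that the bracketed expression in $A_h$ is exactly $\expect\bigl((1-T_\pi|\pi|^{-1/2})^{-\lambda}\bigr)$.

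Your proposal is not a proof either, and you say so yourself in the final paragraph. What you have written is a coherent research outline --- approximate functional equation, diagonal/off-diagonal split, CFKRS recipe for the leading constant, ratios-conjecture extrapolation to non-integer $\lambda$ --- that is broadly the expected route, and your local-statistics computation of $A_h$ agrees with the paper's trinomial heuristic. But none of the hard steps are carried out: you do not prove the averaging lemma with the required uniformity, you do not bound the off-diagonal, and you invoke the ratios conjecture, which is itself open. So there is no gap to name beyond the one you already name: the statement is unproven, the paper leaves it unproven, and your proposal correctly identifies where the difficulty lies rather than resolving it.
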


The shape of arithmetic factors is motivated by the fact (essentially
observed by Kurlberg and Rudnick) that for a fixed $\pi$, the
$\pi$-factor of the Euler product representing $L(f,\demi)$ converges
in law to
$$
(1-T_{\pi}|\pi|^{-1/2})^{-1}
$$
where $T_{\pi}$ is a trinomial random variable taking value $0$ with
probability $\frac{1}{1+|\pi|}$ and values $\pm 1$
with equal probability $\frac{1}{2}\Bigl(1-\frac{1}{|\pi|+1}\Bigr)$; 
the inner parenthesis on the right of the arithmetic factor is, of
course, equal to $\expect((1-T_{\pi}|\pi|^{-1/2})^{-\lambda})$ (as one
sees after rearrangement of the latter).
\end{rem}

\subsection{The number of prime divisors of an integer}
\label{ssec-poisson}

A classical result of Erd\H{o}s and K\'ac states (as a particular case)
that the arithmetic function $\omega(n)$, the number of (distinct)
prime divisors of a positive integer $n\geq 1$, behaves for large $n$
like a Gaussian random variable with mean $\log \log n$ and variance
$\log\log n$, in the sense that
\begin{equation}\label{eq-erdos-kac}
\lim_{N\ra +\infty}{
\frac{1}{N}
|\{
n\leq N\,\mid\, 
a<\frac{\omega(n)-\log\log N}{\sqrt{\log\log N}}<b
\}|}
=
\frac{1}{\sqrt{2\pi}}\int_{a}^b{
e^{-t^2/2}dt}
\end{equation}
for any real numbers $a<b$. 
\par
This phenomenon where increasing variance is observed suggests, in the
context of this paper, to look at the behavior of $\omega(n)$ over
$n\leq N$ without normalizing.  However, mod-Gaussian behavior is
excluded here because of the following easy remark:

\begin{prop}
  Let $(Z_N)$ be a sequence of \emph{integer valued} random
  variables. Then $(Z_N)$ does not converges in the mod-Gaussian sense
  with respect to any unbounded parameters $(\gamma_N)$.
\end{prop}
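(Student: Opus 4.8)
The plan is to exploit the single decisive feature of integer-valued random variables: their characteristic functions are $2\pi$-periodic. Indeed, if each $Z_N$ takes values in $\ZZ$, then $\phi_N(u)=\E[e^{\ii u Z_N}]$ satisfies $\phi_N(u+2\pi)=\phi_N(u)$ for all $u$, since $e^{\ii 2\pi Z_N}=1$ almost surely. In particular $\phi_N(2\pi)=\phi_N(0)=1$, so that $|\phi_N(2\pi)|=1$ for every $N$. This is the whole engine of the argument: it prevents $|\phi_N|$ from decaying at $u=2\pi$ and thereby leaves the Gaussian renormalization factor fully exposed.

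First I would evaluate the defining relation (\ref{M}) of mod-Gaussian convergence at the \emph{single} point $u=2\pi$. The hypothesis gives
$$
e^{-\ii 2\pi\beta_N+(2\pi)^2\gamma_N/2}\,\phi_N(2\pi)\ \to\ \Phi(2\pi).
$$
Taking moduli, and using $|e^{-\ii 2\pi\beta_N}|=1$ together with $|\phi_N(2\pi)|=1$ from the periodicity, the left-hand side is exactly $e^{2\pi^2\gamma_N}$, so that
$$
e^{2\pi^2\gamma_N}\ \to\ |\Phi(2\pi)|.
$$
Because $\Phi$ is by definition a complex-valued function, the limit $\Phi(2\pi)$ is a finite number, hence $|\Phi(2\pi)|<\infty$. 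Since $\gamma_N\geq0$, a sequence of the form $e^{2\pi^2\gamma_N}$ can converge to a finite limit only if $(\gamma_N)$ is bounded: were it unbounded, one could extract a subsequence with $\gamma_{N_k}\to+\infty$, forcing $e^{2\pi^2\gamma_{N_k}}\to+\infty$ and contradicting convergence to the finite value $|\Phi(2\pi)|$. This rules out mod-Gaussian convergence with unbounded $(\gamma_N)$, which is precisely the claim.

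There is essentially no technical obstacle here; the only point to keep straight is which hypotheses are actually used. Continuity of $\Phi$ at $0$ plays no role: one needs merely the \emph{existence} (and finiteness) of the pointwise limit at the one point $u=2\pi$. The role of the integer values is solely to guarantee $|\phi_N(2\pi)|=1$, rather than some uncontrolled small quantity that could otherwise absorb the exploding exponential $e^{2\pi^2\gamma_N}$. I would therefore expect the ``hard part'' to be purely a matter of presentation, namely emphasizing the periodicity observation so the reader sees that the result is an immediate consequence of it.
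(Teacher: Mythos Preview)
Your proposal is correct and follows essentially the same approach as the paper: exploit the $2\pi$-periodicity of the characteristic function of an integer-valued variable, evaluate the mod-Gaussian relation at $u=2\pi$, take absolute values, and observe that the resulting exponential in $\gamma_N$ must have a finite limit. Your version is in fact more carefully written than the paper's (which contains minor typos in the constants), and your observation that only finiteness of $\Phi(2\pi)$, not continuity at $0$, is needed is a nice sharpening of the presentation.
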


\begin{proof}
The point is that the characteristic functions
$$
\expect(e^{iuZ_N})
$$
are $2\pi$-periodic for all $N$ if the $Z_N$ take only integral
values. If $u=2\pi$ (or any other non-zero integral multiple of
$2\pi$) then the limit~(\ref{M}) implies
$$
\lim_{N\ra +\infty}{e^{\gamma_N/2}}=|\Phi(1)|,
$$
and (since $\gamma_N\geq 0$) the existence of this limit implies that
$\gamma_N$ converges to $\gamma\geq 0$.
\end{proof}

However, it turns out that there is mod-Poisson convergence, in the
sense sketched in Section~\ref{sec-questions}. For this, it seems
slightly more appropriate to consider
\begin{equation}\label{eq-omegap}
\omega'(n)=\omega(n)-1
\end{equation}
for $n\geq 2$, because Poisson random variables takes all integral
values $\geq 0$, whereas $\omega(n)\geq 1$ for any $n\geq 2$ (of
course,~(\ref{eq-erdos-kac}) is valid for $\omega'(n)$ also).
\par
To state the result precisely, recall that a Poisson random variable
$P_{\lambda}$ with parameter $\lambda>0$ is one taking (almost surely)
integer values $k\geq 0$ with
$$
\proba(P_{\lambda}=k)=\frac{\lambda^k}{k!} e^{-\lambda}.
$$
\par
The characteristic function is then given by
$$
\E(e^{iuP_{\lambda}})=\exp(\lambda(e^{iu}-1)),
$$
and strong mod-Poisson convergence of a sequence $Z_N$ of random
variables with parameters $\lambda_N$ means that the limit
$$
\lim_{N\ra +\infty}
\exp(\lambda_N(1-e^{iu}))\E(e^{iuZ_N})=\Phi(u)
$$
exists for every $u\in\RR$, and the convergence is locally
uniform. The \emph{limiting function} $\Phi$ is then continuous and
$\Phi(0)=1$.

\begin{prop}\label{prop-mod-poisson}
  For $u\in\Rr$, let
\begin{equation}\label{eq-phi-omega}
  \Phi(u)=\frac{1}{\Gamma(e^{iu}+1)}\prod_p{
  \Bigl(1-\frac{1}{p}\Bigr)^{e^{iu}}
  \Bigl(1+\frac{e^{iu}}{p-1}\Bigr)
}.
\end{equation}
\par
This Euler product is absolutely and locally uniformly
convergent. Moreover, for any $u\in \RR$, we have
$$
\lim_{N\ra +\infty}{ \frac{ (\log N)^{(1-e^{iu})}}{N}\sum_{2\leq n\leq
    N}{e^{iu\omega'(n)}}}=\Phi(u),
$$
and the convergence is locally uniform. 
\end{prop}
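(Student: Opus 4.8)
The plan is to identify the sum as (essentially) the characteristic function of $\omega'$ evaluated at a uniformly random integer, and to extract its asymptotics by the Selberg--Delange method. Write $z=e^{iu}$ and use $\omega'(n)=\omega(n)-1$, so that $e^{iu\omega'(n)}=z^{-1}z^{\omega(n)}$ and the quantity to analyze is $z^{-1}\sum_{2\le n\le N}z^{\omega(n)}$. The function $n\mapsto z^{\omega(n)}$ is multiplicative with $z^{\omega(p^k)}=z$ for every $k\ge 1$, so for $\Reel(s)>1$ its Dirichlet series is
\[
D(s,z)=\sum_{n\ge 1}\frac{z^{\omega(n)}}{n^s}=\prod_p\Bigl(1+\frac{z}{p^s-1}\Bigr).
\]
First I would factor out the expected polar part. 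Since $\zeta(s)^z=\prod_p(1-p^{-s})^{-z}$, writing
\[
D(s,z)=\zeta(s)^z\,G(s,z),\qquad
G(s,z)=\prod_p(1-p^{-s})^z\Bigl(1+\frac{z}{p^s-1}\Bigr),
\]
a short expansion shows that each Euler factor of $G$ equals $1+O(p^{-2s})$, with implied constant locally uniform in $z$, because the $p^{-s}$ terms cancel. Hence the product defining $G(s,z)$ converges absolutely and locally uniformly for $\Reel(s)>\demi$ and for $z$ in compact sets; in particular $G(\,\cdot\,,z)$ is holomorphic there and $G(1,z)=\prod_p(1-p^{-1})^z(1+\tfrac{z}{p-1})$. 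The same estimate at $s=1$, $z=e^{iu}$ gives the absolute and locally uniform convergence of the Euler product defining $\Phi$, which is the first assertion of the proposition.

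The analytic heart is the asymptotic evaluation of $\sum_{n\le N}z^{\omega(n)}$. I would invoke the Selberg--Delange method, in the uniform form going back to Sathe and Selberg: because $D(s,z)=\zeta(s)^zG(s,z)$ with $G$ holomorphic and bounded in a standard zero-free region of $\zeta$ to the left of $\Reel(s)=1$, Perron's formula combined with a Hankel contour around the singularity at $s=1$ (where $\zeta(s)^z\sim(s-1)^{-z}$) yields
\[
\sum_{2\le n\le N}z^{\omega(n)}=\frac{G(1,z)}{\Gamma(z)}\,N(\log N)^{z-1}\bigl(1+o(1)\bigr),
\]
and, crucially, this holds uniformly for $z$ in any compact subset of $\CC$. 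Restricting to $z=e^{iu}$ with $u$ in a compact interval then gives local uniformity in $u$. Assembling the pieces, multiplication by $z^{-1}(\log N)^{1-z}/N$ together with $z^{-1}/\Gamma(z)=1/\Gamma(z+1)$ gives
\[
\frac{(\log N)^{1-e^{iu}}}{N}\sum_{2\le n\le N}e^{iu\omega'(n)}
\;\longrightarrow\;\frac{G(1,e^{iu})}{\Gamma(e^{iu}+1)}=\Phi(u),
\]
locally uniformly in $u$; since $\exp(\lambda_N(1-e^{iu}))=(\log N)^{1-e^{iu}}$ with $\lambda_N=\log\log N$, this is exactly the asserted mod-Poisson convergence.

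The main obstacle is the uniform Selberg--Delange asymptotic itself: controlling $\zeta(s)^z$ through the zero-free region and carrying out the Hankel-contour computation that produces the factor $1/\Gamma(z)$, uniformly in the complex parameter $z$. One delicate point worth flagging is the arc $z=-1$ (that is, $u\equiv\pi$), where $1/\Gamma(z)$ vanishes and the leading term disappears; there the real content is the cancellation $\sum_{n\le N}(-1)^{\omega(n)}=o\bigl(N(\log N)^{-2}\bigr)$, which the zero-free region supplies and which is consistent with $\Phi(\pi)=0$. Everything else is routine bookkeeping.
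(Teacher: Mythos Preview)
Your proposal is correct and is essentially the same approach as the paper's: both derive the asymptotic by the Selberg--Delange method applied to the multiplicative function $n\mapsto z^{\omega(n)}$ with $z=e^{iu}$, after which the passage to $\omega'$ via $z^{-1}/\Gamma(z)=1/\Gamma(z+1)$ is bookkeeping. The only difference is cosmetic: the paper simply invokes the ready-made statement in Tenenbaum (II.5, Th.~3 and II.6, Th.~1, with $\lambda_0(z)=G_1(1;z)/\Gamma(z)$), whereas you spell out the factorization $D(s,z)=\zeta(s)^zG(s,z)$ and the Hankel-contour mechanism; your extra remark about $u=\pi$ (where $1/\Gamma(z)$ vanishes and one needs the additive error term $O\bigl(N(\log N)^{\Reel(z)-2}\bigr)$ rather than a multiplicative $1+o(1)$) is a point the paper leaves implicit in the cited reference.
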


\begin{proof}
Since
$$
\frac{1}{N-1}\sum_{2\leq n\leq N}{e^{iu\omega'(n)}}= e^{-iu}\times
\frac{1}{N}\sum_{1\leq n\leq N}{e^{iu\omega(n)}} +O(1),
$$
for $N\geq 2$, this is in fact a simple reinterpretation of a direct
application of the Delange-Selberg method (see, e.g.,~\cite[II.5,
Theorem 3]{tenenbaum}) to the multiplicative function $n\mapsto
e^{iu\omega(n)}$. The details are explained in~\cite[II.6, Theorem
1]{tenenbaum} (take $N=0$, $z=e^{iu}$, $A=1$ there, and apply
$\lambda_0(z)=1$ to get the formula
  $$
  \frac{1}{\Gamma(e^{iu})}G_1(1;e^{iu})=
  \frac{1}{\Gamma(e^{iu})}\prod_p{ \Bigl(1-\frac{1}{p}\Bigr)^{e^{iu}}
    \Bigl(1+\frac{e^{iu}}{p-1}\Bigr) }
$$ 
for the limit, which is in the notation of loc. cit., with $G_1(s;z)$
defined on the last line of p. 201 of~\cite{tenenbaum}, its analytic
continuation to $\Reel(s)>1/2$ being described on p. 202). Multiplying
by $e^{-iu}$, we obtain the stated result using
$e^{iu}\Gamma(e^{iu})=\Gamma(e^{iu}+1)$.
\end{proof}

\begin{cor}\label{cor-omega-1}
Consider random variables $M_N$, for $N\geq 2$, such that
$$
\proba(M_N=n)=\frac{1}{N-1},\quad\quad 2\leq n\leq N,
$$
and let $Z_N=\omega'(M_N)$. Then the sequence $(Z_N)$ converges
strongly in the mod-Poisson sense with limiting function $\Phi$ given
by~\emph(\ref{eq-phi-omega}) and parameters $\lambda_N=\log\log N$.
\end{cor}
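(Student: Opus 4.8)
The plan is to recognize that the characteristic function of $Z_N$ is, up to a harmless change of normalizing constant, exactly the exponential sum treated in Proposition~\ref{prop-mod-poisson}, so that the corollary becomes a direct restatement of that proposition. First I would compute $\E(e^{iuZ_N})$ straight from the definition of $M_N$. Since $M_N$ is uniform on the integers in $[2,N]$,
$$
\E(e^{iuZ_N})=\E\bigl(e^{iu\omega'(M_N)}\bigr)
=\frac{1}{N-1}\sum_{2\leq n\leq N}e^{iu\omega'(n)},
$$
which is precisely the average appearing in Proposition~\ref{prop-mod-poisson}, except that the normalizing factor is $1/(N-1)$ rather than $1/N$.

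Next I would check that, for the choice $\lambda_N=\log\log N$, the Poisson renormalizing factor matches the power of $\log N$ used in Proposition~\ref{prop-mod-poisson}. Writing $\exp\bigl((\log\log N)\,z\bigr)=(\log N)^z$ with $z=1-e^{iu}$ gives $\exp\bigl(\lambda_N(1-e^{iu})\bigr)=(\log N)^{1-e^{iu}}$, and therefore
$$
\exp\bigl(\lambda_N(1-e^{iu})\bigr)\,\E(e^{iuZ_N})
=\frac{N}{N-1}\,\frac{(\log N)^{1-e^{iu}}}{N}\sum_{2\leq n\leq N}e^{iu\omega'(n)}.
$$
By Proposition~\ref{prop-mod-poisson}, the factor $(\log N)^{1-e^{iu}}N^{-1}\sum_{2\leq n\leq N}e^{iu\omega'(n)}$ converges to $\Phi(u)$ locally uniformly in $u$, with $\Phi$ as in~(\ref{eq-phi-omega}).

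Finally I would conclude by noting that the correcting factor $N/(N-1)$ is independent of $u$ and tends to $1$, so multiplying it against a locally uniformly convergent sequence preserves local uniform convergence. Hence the displayed quantity tends to $\Phi(u)$ locally uniformly, which is exactly strong mod-Poisson convergence with parameters $\lambda_N=\log\log N$. There is no real obstacle here: the only bookkeeping point is the passage from the $1/(N-1)$ normalization to the $1/N$ normalization of the proposition, and this is immediate because $N/(N-1)\to1$ uniformly. All of the genuine analytic content — the Delange--Selberg estimate for the multiplicative function $n\mapsto e^{iu\omega(n)}$ — has already been packaged into Proposition~\ref{prop-mod-poisson}, so the corollary is purely a matter of reindexing and identifying the two normalizations.
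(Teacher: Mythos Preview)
Your proposal is correct and follows essentially the same approach as the paper: both simply observe that $\exp(\lambda_N(1-e^{iu}))=(\log N)^{1-e^{iu}}$ and then invoke Proposition~\ref{prop-mod-poisson}. You have merely spelled out in detail the harmless passage from the $1/(N-1)$ normalization in the definition of $M_N$ to the $1/N$ normalization used in the proposition, which the paper's one-line proof leaves implicit.
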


\begin{proof}
  This follows directly from the proposition and the definition of
  strong mod-Poisson convergence, since we have
  $\expect(e^{iuP_{\lambda_N}})^{-1}= (\log N)^{(1-e^{iu})}$.
\end{proof}

The analogue of Corollary~\ref{cor-clt} is then the theorem
of Erd\H{o}s and K\'ac:

\begin{cor}
The Gaussian limit~\emph{(\ref{eq-erdos-kac})} holds.
\end{cor}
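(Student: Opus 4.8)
The plan is to mimic exactly the proof of Corollary~\ref{cor-clt}, replacing the Gaussian normalization used there by the Poisson normalization furnished by Corollary~\ref{cor-omega-1}, and then to recover the characteristic function of $\mathcal{N}(0,1)$ by a second-order expansion. Write $\lambda_N=\log\log N$ and $Z_N=\omega'(M_N)$, so that by Corollary~\ref{cor-omega-1} the sequence $(Z_N)$ converges strongly in the mod-Poisson sense with parameters $\lambda_N$ and a limiting function $\Phi$ continuous at $0$ with $\Phi(0)=1$. First I would compute the characteristic function of the rescaled variable $W_N=(Z_N-\lambda_N)/\sqrt{\lambda_N}$. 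Setting $v_N=u/\sqrt{\lambda_N}$ and $G_N(v)=\exp(\lambda_N(1-e^{iv}))\E(e^{ivZ_N})$ (the quantity that converges to $\Phi(v)$ locally uniformly), one has for every $u\in\RR$
$$
\E(e^{iuW_N})=e^{-iu\sqrt{\lambda_N}}\,\E\bigl(e^{iv_N Z_N}\bigr)
=G_N(v_N)\,\exp\bigl(-iu\sqrt{\lambda_N}-\lambda_N(1-e^{iv_N})\bigr).
$$

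The key point, and the only place where the \emph{strong} (locally uniform) form of the convergence is essential, is the evaluation of $G_N$ at the moving argument $v_N=u/\sqrt{\lambda_N}\to0$. Since $G_N\to\Phi$ uniformly on a neighborhood of $0$ and $\Phi$ is continuous there with $\Phi(0)=1$, this forces $G_N(v_N)\to1$; this is the exact analogue of the step $G_N(u/\sqrt{\log N})\to1$ in Corollary~\ref{cor-clt}. For the remaining exponential I would expand $e^{iv_N}=1+iv_N-\tfrac12 v_N^2+O(v_N^3)$, which gives
$$
-iu\sqrt{\lambda_N}-\lambda_N(1-e^{iv_N})
=-iu\sqrt{\lambda_N}+\lambda_N\Bigl(iv_N-\tfrac12 v_N^2+O(v_N^3)\Bigr)
=-\frac{u^2}{2}+O(\lambda_N^{-1/2}),
$$
the linear term $iu\sqrt{\lambda_N}$ cancelling exactly against the mean shift. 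Hence $\E(e^{iuW_N})\to e^{-u^2/2}$ for every $u$, and L\'evy's theorem yields $W_N\convlaw\mathcal{N}(0,1)$.

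It then remains to transfer this to the statement~(\ref{eq-erdos-kac}). Because $\omega'(n)=\omega(n)-1$, the variable occurring there differs from $W_N$ by the deterministic shift $(\omega(M_N)-\log\log N)/\sqrt{\log\log N}=W_N+1/\sqrt{\log\log N}$, and $1/\sqrt{\log\log N}\to0$, so this variable also converges in law to $\mathcal{N}(0,1)$. Since $M_N$ is uniform on $\{2,\dots,N\}$ and the Gaussian distribution function is continuous, convergence in law at the endpoints $a,b$ is precisely the assertion that the proportion of integers in this range with $a<(\omega(n)-\log\log N)/\sqrt{\log\log N}<b$ tends to $\tfrac{1}{\sqrt{2\pi}}\int_a^b e^{-t^2/2}\,dt$; the discrepancy between the ranges $\{2,\dots,N\}$ and $\{1,\dots,N\}$ affects one value of $n$ and is $O(1/N)$, hence negligible. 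I do not anticipate a genuine obstacle: the argument is entirely parallel to Corollary~\ref{cor-clt}, and the only delicate point is the evaluation of $G_N$ at the vanishing argument $v_N$, which is handled cleanly by the strong mod-Poisson convergence together with the continuity of $\Phi$ at $0$.
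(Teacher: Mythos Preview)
Your proof is correct and follows essentially the same route as the paper: both arguments insert the moving point $v_N=u/\sqrt{\log\log N}$ into the mod-Poisson convergence statement, use the locally uniform convergence together with $\Phi(0)=1$ to kill the $G_N(v_N)$ factor, and then recover $e^{-u^2/2}$ from the second-order Taylor expansion of $1-e^{iv_N}$ before invoking L\'evy's theorem. The only cosmetic difference is that the paper works directly with $(\omega(M_N)-\log\log N)/\sqrt{\log\log N}$ and absorbs the extra $e^{it}$ from $\omega=\omega'+1$ into the expansion, whereas you first prove the limit for $W_N$ and then shift by $1/\sqrt{\log\log N}$ at the end.
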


\begin{proof}
  With notation as in Corollary~\ref{cor-omega-1}, we must show that
$$
Y_N=\frac{\omega(M_N)-\log\log N}{\sqrt{\log\log N}}
$$
converges in law to a standard Gaussian variable (with a shift from
$N$ to $N+1$ which is immaterial). The argument is again quite
standard, but we spell it out in detail.
\par
Let $u\in\RR$ be fixed; the characteristic function of $Y_N$ is
\begin{align}\label{eq-charfn}
  \E(e^{iuY_N})&=\exp(-iu\sqrt{\log\log N})\E(e^{it\omega(M_N)})\nonumber \\
  &= \exp(-iu\sqrt{\log\log N}+it)\E(e^{it\omega'(M_N)})
\end{align}
where
$$
t=\frac{u}{\sqrt{\log\log N}}
$$
(note that $t$ depends on $N$ and $t\ra 0$ when $N\ra +\infty$).
\par
By Proposition~\ref{prop-mod-poisson}, in particular the uniform
convergence with respect to $u$, we have
\begin{equation}\label{eq-1}
\lim_{N\ra +\infty}{(\log
  N)^{1-e^{it}}\E(e^{it\omega'(M_N)})}=\Phi(0)=1.
\end{equation}
\par
Moreover, we have for $N\geq 1$
\begin{align}
(\log N)^{e^{it}-1}&=
\exp((e^{it}-1)\log\log N)\nonumber\\
&=\exp((it-t^2/2+O(t^3))\log\log N)\nonumber\\
&=\exp\Bigl(iu\sqrt{\log\log N}-\frac{u^2}{2}+
O\Bigl(
\frac{u^3}{\sqrt{\log\log N}}
\Bigr)\Bigr).
\label{eq-2}
\end{align}
\par
Writing~(\ref{eq-charfn}) as
$$
\exp(-iu\sqrt{\log\log N}+it) \times
(\log N)^{e^{it}-1}\times (\log
N)^{1-e^{it}}\E(e^{it\omega'(M_N)}) ,
$$
we see from~(\ref{eq-1}) and~(\ref{eq-2}) that this is
$$
\exp\Bigl(-\frac{u^2}{2}+O\Bigl(\frac{u^3}{\sqrt{\log\log N}}\Bigr)
\Bigr)
(1+o(1))\ra \exp\Bigl(-\frac{u^2}{2}\Bigr),\quad \text{ as }\quad
N\ra +\infty,
$$
and by L\'evy's criterion, this concludes the proof.
\end{proof}

In fact, this proof is essentially the one of R\'enyi and
Tur\'an~\cite{renyi-turan}, who simply did not isolate
Proposition~\ref{prop-mod-poisson} as a separate statement of interest
(and proved directly the version of the Delange-Selberg needed in this
case, see equation (1.31) in loc. cit.).

\begin{rem}
It is also natural to use Poisson variables because the asymptotic
formula 
$$
\frac{1}{N} |\{n\leq N\,\mid\, \omega'(n)=k\}|\sim \frac{1}{\log
  N}\frac{(\log\log N)^{k}}{k!}
$$
holds as $N\ra +\infty$, for fixed $k\geq 0$ (the uniformity with
respect to $k$, as shown first by Sathe and Selberg, is a quite
delicate issue, see, e.g.,~\cite[II.6, Theorem 4]{tenenbaum}), so that
$\omega'(n)$ is again seen to be ``approximately'' Poisson with
parameter $\log\log N$.
\end{rem}

Our focus here is on arithmetic behavior, but it is interesting to
note a phenomenon similar (though apparently simpler) to what happens
for the Moment Conjecture for $\zeta(1/2+it)$, namely, that the
limiting function $\Phi(u)$ takes the form of a product
$$
\Phi(u)=\Phi_1(u)\Phi_2(u)
$$
with
$$
\Phi_1(u)=\frac{1}{\Gamma(e^{iu}+1)},\quad\quad \Phi_2(u)= \prod_{p}{
  \Bigl(1-\frac{1}{p}\Bigr)^{e^{iu}-1} \Bigl(1-\frac{1}{p}\Bigr)
  \Bigl(1+\frac{e^{iu}}{p-1}\Bigr) },
$$
and we can see the Euler product $\Phi_2$ as an instance of a natural
asymptotic probabilistic model of primes, while $\Phi_1$ comes from a
seemingly unrelated model which has some group-theoretic origin.
\par
Indeed, for the first factor $\Phi_1(u)=\Gamma(e^{iu}+1)^{-1}$,
appealing to the classical formula
$$
\frac{1}{\Gamma(e^{iu}+1)}=\prod_{k\geq 1}{
  \Bigl(1+\frac{e^{iu}}{k}\Bigr)
  \Bigl(1+\frac{1}{k}\Bigr)^{-e^{iu}}
}
$$
for any $u\in\Rr$ (due to Euler; see~\cite[12.11]{ww}), we can compute
this as follows:
\begin{align*}
  \Phi_1(u)&=\lim_{N\ra +\infty}{ \prod_{k\leq N}{
      \Bigl(1+\frac{1}{k}\Bigr)^{1-e^{iu}}
      \Bigl(1+\frac{1}{k}\Bigr)^{-1} \Bigl(1+\frac{e^{iu}}{k}\Bigr) }
  }\\
  &= \lim_{N\ra +\infty}\exp(\lambda_N(1-e^{iu}))\prod_{k\leq N}{
    \Bigl(1+\frac{1}{k}\Bigr)^{-1} \Bigl(1+\frac{e^{iu}}{k}\Bigr)
  }\\
  &=\lim_{N\ra +\infty}\exp(\lambda_N(1-e^{iu})) \expect(e^{iu Z_N}),
\end{align*}
where 
$$
\lambda_N=\sum_{1\leq k\leq N}{\log (1+k^{-1})}=\log (N+1),
$$
and $Z_N$ is the sum 
$$
Z_N=B_{1}+B_{2}+\cdots +B_{N},
$$
with $B_{k}$ denoting independent Bernoulli random variables with
distribution
$$
\proba(B_{k}=1)=1-\frac{1}{1+\frac{1}{k}}=\frac{1}{k+1},\quad\quad
\proba(B_{k}=0)=\frac{1}{1+\frac{1}{k}}=\frac{k}{k+1}.
$$
\par
This distribution is also found, for instance, as the distribution of
$K_{N+1}-1$, where $K_{N+1}$ is the number of cycles of a permutation
$\sigma\in \mathfrak{S}_N$, distributed according to the uniform
measure on the symmetric group (see, e.g.,~\cite[\S 4.6]{abt}; we
observe in particular that this gives another instance of natural
mod-Poisson convergence explaining some type of normal limit). So here
we see random permutations occuring as explaining the
``transcendental'' factor $\Phi_1(u)$; in~\cite{kn}, this phenomenon
is explored in greater depth.
\par
For the arithmetic factor $\Phi_2(u)$, we argue (much as in
Section~\ref{ssec-arith}) that
$$
\Phi_2(u)=\lim_{y\ra +\infty}{\prod_{p\leq y}{
    \Bigl(1-\frac{1}{p}\Bigr)^{e^{iu}-1}
\Bigl(1-\frac{1}{p}\Bigr)
\Bigl(1+\frac{e^{iu}}{p-1}\Bigr)
}},
$$
and by isolating the first term, it follows that
\begin{align*}
\Phi_2(u)&=\lim_{y\ra +\infty}{
\exp((1-e^{iu})\lambda_y)
\prod_{p\leq y}{\Bigl(1-\frac{1}{p}+\frac{1}{p}e^{iu}\Bigr)
}}\\
&=\lim_{y\ra +\infty}{\E(e^{iuP_{\lambda_y}})^{-1}}
\E(e^{iuZ'_y})
\end{align*}
where
$$
\lambda_y=\sum_{p\leq y}{\log (1-p^{-1})^{-1}}=
\sum_{\stacksum{p\leq y}{k\geq 1}}{
\frac{1}{kp^k}}\sim \log\log y,\quad\text{ as } y\ra +\infty,
$$
and
\begin{equation}\label{eq-zy}
Z'_y=\sum_{p\leq y}{B'_{p}}
\end{equation}
is a sum of independent Bernoulli random variables with parameter
$1/p$:
$$
\proba(B'_{p}=1)=\frac{1}{p},\quad\quad
\proba(B'_{p}=0)=1-\frac{1}{p}.
$$
\par
The parameters of these Bernoulli laws correspond exactly to the
``intuitive'' probability that an integer $n$ be divisible by $p$, or
in other words to the model limit
$$
\lim_{x\ra +\infty}{\frac{1}{x}\sum_{\stacksum{n\leq x}{n\equiv
      0\mods{p}}}{1}}
=\frac{1}{p},
$$
and the independence of the $B'_p$ corresponds to the formal
(algebraic) independence of the divisiblity by distinct primes given,
e.g., by the Chinese Remainder Theorem.
\par
Since the analogue of Proposition~\ref{prop-formal}, (3), is trivially
valid for mod-Poisson convergence, we also recover in this manner
(without arithmetic) the fact that the limiting function $\Phi(u)$
arises from mod-Poisson convergence. 
\par
As in the case of the Riemann zeta function, we note that the
independent model fails to capture the truth on the distribution of
$\omega(n)$, the extent of this failure being given by the factor
$\Phi_1(u)$. Because
$$
\frac{Z'_y-\log \log y}{\sqrt{\log\log y}}\fleche{law} \mathcal{N}(0,1)
$$
(with the right-hand side being a standard normal random variable),
this discrepancy between the independent model and the arithmetic
truth is invisible at the level of the normalized convergence in
distribution. 
\par

\begin{rem}
  Computations similar to the above show that, for any sequence
  $(x_n)$ of positive real numbers with
\begin{equation}\label{eq-mod-poisson-cond}
\sum_{n\geq 1}{x_n}=+\infty,\quad\quad \sum_{n\geq 1}{x_n^2}<+\infty, 
\end{equation}
if we denote now by $(B_n)$ a sequence of independent Bernoulli random
variables with
$$
\proba(B_n=1)=x_n,\quad\quad \proba(B_n=0)=1-x_n,
$$
then the random variables
$$
Z_N=B_1+\cdots+B_N
$$
have mod-Poisson convergence with parameters
$$
\lambda_N=x_1+\cdots +x_N,
$$
and with limiting function given by
$$
u\mapsto \prod_{n\geq 1}{(1+x_n(e^{iu}-1))\exp(x_n(1-e^{iu}))}\ ;
$$
the (uniform) convergence of this infinite product is ensured by the
second condition in~(\ref{eq-mod-poisson-cond}), after expanding in
terms of $x_n$ (which tends to $0$ as $n\ra +\infty$).
\end{rem}


\renewcommand{\refname}{References}


\begin{thebibliography}{99}

\bibitem{abt}
\textsc{R. Arratia}, \textsc{A.D. Barbour} and \textsc{S. Tavar\'e}:
\textit{Logarithmic combinatorial structures: a probabilistic
  approach}, E.M.S. Monographs, 2003.





\bibitem{bourbaki} \textsc{N. Bourbaki}: \textit{Int\'egration},
  Chapters 1, 2, 3, 4, 2nd edition, Hermann, 1965.

\bibitem{breiman} \textsc{L. Breiman}: \textit{Probability}, Classics
  in Applied Mathematics 7, SIAM, 1992.

\bibitem{cfkrs} \textsc{J.B. Conrey}, \textsc{D. Farmer, J. Keating,
    M. Rubinstein and N. Snaith}: \textit{Integral moments of
    $L$-functions}, Proc. London Math. Soc. 91 (2005), 33--104.


\bibitem{fr} \textsc{D. Faifman and Z. Rudnick}: \textit{Statistics of
    the zeros of zeta functions in families of hyperelliptic curves
    over a finite field}, preprint (2008), \url{arXiv:0803.3534}.


\bibitem{gr}
\textsc{I.S. Gradshteyn and I.M. Ryzhik}: \textit{Table of
    integrals, series, and products}, 5th edition, Academic Press,
  1994.

\bibitem{hardy-wright}
\textsc{G.H. Hardy and E.M. Wright}: \textit{An introduction to the
  theory of numbers}, 5th Edition, Oxford Univ. Press, 1979.

\bibitem{jacodprotter}\textsc{J. Jacod and P. Protter}:
  \textit{Probability essentials}, Springer, 2nd Edition, 2003.

\bibitem{katzsarnak} \textsc{N.M. Katz and P. Sarnak}: \textit{Random
    matrices, Frobenius eigenvalues, and monodromy}, A.M.S Colloquium
  Publ. 45, A.M.S, 1999.


\bibitem{Keating-Snaith} \textsc{J.P. Keating and N.C. Snaith}:
  \textit{Random matrix theory and $\zeta(1/2+i t)$},
  Commun. Math. Phys.  \textbf{214}, (2000), 57-89.

\bibitem{keating-snaith2} \textsc{J.P. Keating and N.C. Snaith}:
  \textit{Random matrix theory and $L$-functions at $s=1/2$},
  Comm. Math. Phys. 214 (2000), 91--110.

\bibitem{kn} \textsc{E. Kowalski and A. Nikeghbali}:
  \textit{Mod-Poisson convergence in probability and number theory},
  preprint (2009); \url{arXiv:0905.0318}.

\bibitem{kr}\textsc{P. Kurlberg and Z. Rudnick}: \textit{The
    fluctuations in the number of points on a hyperelliptic curve over
  a finite field}, preprint (2008), \url{arXiv:0804.0808}.


\bibitem{Mezzadri} \textsc{F. Mezzadri and N.C. Snaith (editors)}:
  \textit{Recent Perspectives in Random Matrix Theory and Number
    Theory}, LMS Lecture Notes \textbf{322}, Cambridge Univ. Press,
  2005.

\bibitem{NikYor} \textsc{A. Nikeghbali and M. Yor}: \textit{The Barnes G
    function and its relations with sums and products of generalized
    Gamma variables}, preprint, \url{arXiv:0707.3187}.


\bibitem{renyi-turan}
\textsc{A. R\'enyi and P. Tur\'an}: \textit{On a theorem of
  Erd\H{o}s-K\'ac}, Acta Arith. 4 (1958), 71--84.

\bibitem{Sato} \textsc{K. Sato}: \textit{L\'{e}vy processes and
    infinitely divisible distributions}, Cambridge Studies
  Adv. Math. \textbf{68}, Cambridge Univ. Press, 1999.

\bibitem{tenenbaum} \textsc{G. Tenenbaum}: \textit{Introduction to
    analytic and probabilistic number theory}, Cambridge Studies
  Adv. Math. \textbf{46}, Cambridge Univ. Press, 1995.


\bibitem{ww}\textsc{E.T. Whittaker and G.N. Watson}: \textit{A course
    in modern analysis}, 4th Edition, Cambridge Math. Library,
  Cambridge Univ. Press, 1996.

\bibitem{zygmund} \textsc{A. Zygmund}: \textit{Trigonometric series},
  2nd Edition, Vol. I, Cambridge Math. Library, Cambridge Univ. Press,
  1988.

\end{thebibliography}
\end{document}